\newcommand{\bbN}{{\mathbb N}}
\newcommand{\bbT}{{\mathbb T}}
\newcommand{\bbR}{{\mathbb R}}
\newcommand{\bbC}{\mathbb C}
\newcommand{\cJ}{{\mathcal J}}
\newcommand{\cK}{{\mathcal K}}
\newcommand{\p}{{\mathcal P}}
\newcommand{\cX}{{\mathcal X}}
\newcommand{\calH}{{\mathcal H}}
\newcommand{\cY}{{\mathcal Y}}
\newcommand{\cZ}{{\mathcal Z}}
\newcommand{\bA}{{\mathbf A}}
\newcommand{\cA}{{\mathcal A}}
\newcommand{\cE}{{\mathcal E}}
\newcommand{\rs}{\restriction}
\DeclareMathOperator{\Ad}{Ad}
\renewcommand{\>}{\rangle}
\newcommand{\cC}{\mathcal C}
\newcommand{\cF}{\mathcal F}
\newcommand{\cB}{\mathcal B}
\newcommand{\calD}{\mathcal D}
\newcommand{\e}{\varepsilon}
\newtheorem{thrm}{Theorem}
\newtheorem{thm}{Theorem}[section]
\newtheorem{theorem}[thm]{Theorem}
\newtheorem{claim}[thm]{Claim}
\newtheorem{lemma}[thm]{Lemma}
\newtheorem{prop}[thm]{Proposition}
\theoremstyle{definition}
\newtheorem{problem}[thm]{Problem}
\DeclareMathOperator{\Fin}{Fin}
\newcounter{my_enumerate_counter}
\newcommand{\pushcounter}{\setcounter{my_enumerate_counter}{\value{enumi}}}
\newcommand{\popcounter}{\setcounter{enumi}{\value{my_enumerate_counter}}}
\def\rs{\restriction}
\DeclareMathOperator{\SA}{sa}
\DeclareMathOperator{\Tr}{Tr}
\DeclareMathOperator{\proj}{proj} 
 \DeclareMathOperator{\SPAN}{\overline{span}}
\DeclareMathOperator{\dom}{dom}
\newcommand{\lbl}{\label}
\newcommand{\bfD}{{\mathbf D}}
\newcommand{\bfH}{{\mathbf H}}
\newcommand{\bbfR}{{\mathbf {\bar R}}}
\newcommand{\bfR}{{\mathbf R}}
\newcommand{\cP}{{\mathcal P}}
\newtheorem*{TAdef}{TA}
\newcommand{\cU}{{\mathcal U}}
\newcommand{\cV}{\mathcal V}
\newcommand{\ba}{\mathbf a}
\newcommand{\bb}{\mathbf b}
\newcommand{\bc}{\mathbf c}
\newcommand{\bP}{\mathbf P}
\newcommand{\bfQ}{{\mathbf Q}}
\newcommand{\bfS}{{\mathbf S}}
\newcommand{\bQ}{{\mathbf Q}}
\DeclareMathOperator{\even}{\text{even}}
\DeclareMathOperator{\odd}{\text{odd}}
\newcommand{\NupN}{\bbN^{\uparrow\bbN}}
\newcommand{\cUN}{\bbT^{\bbN}}
\newcommand{\cI}{\mathcal I}
\newcommand{\bK}{\mathbf K}
\newcommand{\fd}{\mathfrak d}
\newcommand{\bU}{\mathbf U}
 \DeclareMathOperator{\findex}{index}
\title[All automorphisms of $\cC(H)$ are inner]{All automorphisms of
the Calkin algebra\\  are inner}
\dedicatory{Dedicated to my wife Tatiana Velasevic and Dr. Carl J. Vaughan and Dr.
Leonard N. Girardi of New York-Presbyterian Hospital. Without them I would not
be around to prove Theorem~\ref{T.0}.}
\author{Ilijas Farah}
\address{Department of Mathematics and Statistics\\
York University\\
4700 Keele Street\\
North York, Ontario\\ Canada, M3J 1P3\\
and Matematicki Institut, Kneza Mihaila 34, Belgrade, Serbia}
\email{ifarah@mathstat.yorku.ca}
 \subjclass{46L40, 
46L05, 
 03E75, 
03E65 
}
\urladdr{http://www.math.yorku.ca/$\sim$ifarah}
\date{\today.}
\begin{document}
\begin{abstract} We prove that it is relatively consistent with the usual
axioms of mathematics that all automorphisms of the Calkin algebra are inner.
Together with a 2006 Phillips--Weaver construction of an outer automorphism
using the Continuum Hypothesis, this gives a complete solution to a 1977
problem of Brown--Douglas--Fillmore. We also give a simpler and self-contained
proof of the Phillips--Weaver result.
\end{abstract}

\maketitle

 Fix a separable infinite-dimensional complex
Hilbert space $H$. Let $\cB(H)$ be its algebra of bounded linear operators,
$\cK(H)$ its ideal of compact operators and $\cC(H)=\cB(H)/\cK(H)$ the Calkin
algebra. Let $\pi\colon \cB(H)\to \cC(H)$ be the quotient map.  In
\cite[1.6(ii)]{BrDoFi:Extensions} (also \cite{Sak:Pure}, \cite{We:Set}) it was
asked whether all automorphisms of the Calkin algebra are inner. Phillips and
Weaver (\cite{PhWe:Calkin}) gave a partial answer by constructing an outer
automorphism using the Continuum Hypothesis. We complement their answer by
showing that a well-known set-theoretic axiom  implies all automorphisms are
inner. Neither the statement of this axiom nor the proof of Theorem~\ref{T.0}
involve set-theoretic considerations beyond the standard functional analyst's
toolbox.

\begin{thrm}\lbl{T.0} Todorcevic's Axiom, TA,   implies that all
automorphisms of the Calkin algebra  of a separable Hilbert space are inner.
\end{thrm}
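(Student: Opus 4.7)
The plan is, given an automorphism $\Phi$ of $\cC(H)$, to produce a unitary $U \in \cB(H)$ such that $\Phi = \Ad \pi(U)$. The strategy is to rigidify $\Phi$ first on a diagonal maximal abelian subalgebra and then on off-diagonal data, using TA at each stage to pass from countable local information to a globally coherent object.

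Fix an orthonormal basis $(e_n)_{n \in \bbN}$ of $H$ and the corresponding atomic maximal abelian subalgebra $D \subset \cB(H)$. Then $\pi(D) \cong \ell^\infty/c_0$ and its projection lattice is $\cP(\bbN)/\Fin$. The first step is to show that $\Phi \rs \pi(D)$ is induced by an almost-permutation of $\bbN$. This is a Veli\v{c}kovi\'c-style rigidity theorem: under TA, every automorphism of $\cP(\bbN)/\Fin$ is trivial, and the extension to $\ell^\infty/c_0$ follows by functional-calculus density. After composing $\Phi$ with the inner automorphism given by the corresponding permutation unitary, I may assume $\Phi$ fixes $\pi(D)$ pointwise; in particular, $\Phi$ preserves each diagonal projection $\pi(p_A)$ for $A \subseteq \bbN$ infinite.

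Next, consider ``permutation-type'' partial isometries $V \in \cB(H)$ sending basis vectors to scalar multiples of basis vectors, $V e_n = \lambda_n e_{\sigma(n)}$ for some partial injection $\sigma$ and unimodular $\lambda_n$. Since $\Phi$ fixes the initial and range projections of $\pi(V)$, one has $\Phi(\pi(V)) = \pi(\xi_V \cdot V)$ for a unimodular diagonal $\xi_V \in D$, well-defined up to a compact perturbation. The assignment $V \mapsto \xi_V$ is essentially a $1$-cocycle on the groupoid of such partial isometries, and producing a global unitary $U$ implementing $\Phi$ reduces to showing this cocycle is a coboundary, i.e.\ $\xi_V = \bar\xi \cdot (V \xi V^*)$ for a single unimodular diagonal $\xi \in D$.

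The main task, and the principal obstacle, is this cocycle-triviality statement, which is where TA does the essential work. I would define an open coloring on pairs of partial isometries $(V, W)$ based on whether $\xi_V, \xi_W$ fail a chosen compatibility tolerance. TA then yields either an uncountable $0$-homogeneous set---ruled out by showing it would encode a Luzin-type gap in $\cP(\bbN)/\Fin$ incompatible with $\Phi$ being an automorphism---or a countable partition into $1$-homogeneous pieces on each of which the phases cohere. Gluing these pieces by a Baire-category argument on the Polish space of candidate diagonal unitaries produces the desired $\xi$. The unitary $U_\xi \in D$ then implements $\Phi$ on the $*$-algebra generated by $D$ and all such $\pi(V)$, a norm-dense subalgebra of $\cC(H)$; continuity of both $\Phi$ and $\Ad\pi(U_\xi)$ yields $\Phi = \Ad \pi(U_\xi)$ everywhere.
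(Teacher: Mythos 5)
Your proposal has the right flavor---rigidify on a distinguished subalgebra, then use TA via an open coloring to glue local data---but three of the steps are gaps that I don't see how to close, and the paper's actual argument is built to avoid exactly these problems.

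\emph{First gap: the masa is not preserved.} You assume that $\Phi\rs\pi(D)$ is an automorphism of $\pi(D)\cong\ell^\infty/c_0$. But a priori $\Phi$ carries $\pi(D)$ to \emph{some} masa of $\cC(H)$, and there is no general reason this masa should be unitarily conjugate to $\pi(D)$; indeed, there are many non-conjugate masas in the Calkin algebra, and whether an automorphism must respect the conjugacy class of $\pi(D)$ is not a routine reduction. The paper does not attempt this reduction; instead it works with the FDD von Neumann algebras $\calD[\vec E]$ for partitions $\vec E$ of $\bbN$ into finite intervals, which are not masas, and Lemma~\ref{L.Arv} shows that every operator in $\cB(H)$ is, modulo compacts, a sum $a^0+a^1$ with $a^i\in\calD[\vec E^{\text{even/odd}}]$ for a suitable $\vec E$. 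This decomposition is what lets the paper avoid ever having to ``normalize'' the action of $\Phi$ on a single masa.

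\emph{Second gap: TA alone does not give triviality of automorphisms of $\cP(\bbN)/\Fin$.} The Veli\v{c}kovi\'c theorem you invoke requires TA \emph{together with} Martin's Axiom, not TA alone (it remains open whether TA by itself implies all automorphisms of $\cP(\bbN)/\Fin$ are trivial). Since Theorem~\ref{T.0} assumes only TA, this is not available to you. The paper is explicit about this point: the discussion preceding Lemma~\ref{L.trivial} notes that MA would be needed in the $\cP(\bbN)/\Fin$ setting for the amalgamation step (patching ``inner on an ideal'' up to ``inner''), but that for $\cC(H)$ the hereditary subalgebra $\calD_M[\vec E]$ is linked back to $\calD[\vec E]$ by a single partial isometry, so Lemma~\ref{L.trivial} lets one propagate innerness without any forcing axiom beyond TA. Your approach does not reproduce this economy.

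\emph{Third gap: the density claim at the end.} The $*$-algebra generated by $\pi(D)$ and the permutation-type partial isometries is not obviously norm-dense in $\cC(H)$; its preimage in $\cB(H)$ has the flavor of a uniform Roe algebra, which is a proper C*-subalgebra of $\cB(H)$. So even if the cocycle argument went through, you would only have determined $\Phi$ on a proper subalgebra. The paper gets around this precisely via Lemma~\ref{L.Arv}: once $\Phi$ is known to be inner on each $\calD[\vec E]$ (Proposition~\ref{P.2}), the resulting family $\{(\vec E,u(\vec E))\}$ is a coherent family of unitaries in the sense of \S\ref{S.CFU}, and Theorem~\ref{P.1} shows under TA that such a family is uniformized by a single unitary, which handles all of $\cC(H)$ by the decomposition lemma. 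Your cocycle/coboundary picture is morally similar to that final step, but it is set up on the wrong family of subalgebras. Finally, even the ``inner on each $\calD[\vec E]$'' step requires substantial work (the Jankov--von Neumann uniformization in Theorem~\ref{T.JvN}, the Ulam-stability result Theorem~\ref{T.approximate}, and the tree-like partition argument of Lemma~\ref{L.cJ-m}) that your sketch does not touch.
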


Todorcevic's Axiom (also known as the Open Coloring Axiom, OCA) is
stated in \S\ref{S.OCA}. Every model of ZFC has a forcing extension in 
which TA holds (\cite{Ve:Applications}). 
 TA also holds in Woodin's canonical model for negation of the Continuum
Hypothesis (\cite{Wo:CH2}, \cite{La:Showing}) and it follows from the
Proper Forcing Axiom, PFA (\cite{To:Partition}). The latter is a
strengthening of the Baire Category Theorem and besides its
applications to the theory of liftings it can be used to find other
combinatorial reductions (\cite[\S8]{To:Partition}, \cite{Moo:Five}).

 The Calkin
algebra provides both a natural context and a powerful tool for studying
compact perturbations of operators on a Hilbert space. The original motivation
for the problem solved in Theorem~\ref{T.0} comes from a classification problem
for normal operators.
 By results of Weyl, von Neumann, Berg and Sikonia, if
$a$ and $b$ are normal operators in $\cB(H)$ then one is untarily equivalent to
a compact perturbation of the other if and only if their essential spectra
coincide (see  the introduction to \cite{BrDoFi:Unitary} or \cite[\S
IX]{Dav:C*}). The \emph{essential spectrum}, $\sigma_e(a)$, of $a$ is the set
of all accumulation points of its spectrum $\sigma(a)$, together with all of
its isolated points of infinite multiplicity.  It is known to be equal to the
spectrum of $\pi(a)$ in the Calkin algebra. Therefore the map $a\mapsto
\sigma_e(a)$ provides a complete invariant for the unitary equivalence of those
operators in the Calkin algebra that lift to normal operators in $\cB(H)$.

An operator $a$ is said to be \emph{essentially normal} if $aa^*-a^*a$ is
compact, or equivalently, if its image in the Calkin algebra  is normal.
 Not every essentially normal operator is a compact perturbation of a  normal
operator. For example, an argument using Fredholm index shows that the
unilateral shift $S$ is not a compact perturbation of a normal operator
(\cite{BrDoFi:Unitary}) while its image in $\cC(H)$ is clearly a unitary. Since
the essential spectra of $S$ and its adjoint are both equal to the unit circle,
the above mentioned classification does not extend to all normal operators in
$\cC(H)$.
 For an essentially normal operator $a$ and
$\lambda\in \bbC\setminus \sigma_e(a)$ the operator $a-\lambda I$ is Fredholm.
In  \cite{BrDoFi:Unitary} (see also \cite{BrDoFi:Extensions} or \cite[\S
IX]{Dav:C*}) it was proved that the  function $\lambda\mapsto \findex
(a-\lambda I)$ together with $\sigma_e(a)$ provides a complete invariant for
the relation of unitary equivalence modulo a compact perturbation on
essentially normal operators.

It is interesting to note that the unitary equivalence of normal (even
self-adjoint) operators is of much higher complexity than the unitary
equivalence of normal (or even essentially normal) operators modulo the compact
perturbation. By the above, the latter relation is \emph{smooth}: a complete
invariant is given by a Borel-measurable map into a Polish space. On the other
hand, the complete invariant for the former given by the spectral theorem
 is of much higher complexity. As a matter of fact, in
\cite{KecSof:Strong} it was proved that the unitary equivalence of self-adjoint
operators does not admit any effectively assigned complete invariants coded by
countable structures.

Instead of the unitary equivalence modulo compact perturbation, one may
consider a coarser relation  which we temporarily denote by $\sim$. Let $a\sim
b$ if there is an automorphism $\Phi$ of the Calkin algebra sending $\pi(a)$ to
$\pi(b)$.  It is clear that $a\sim b$ implies $\sigma_e(a)=\sigma_e(b)$, and
therefore two relations coincide on normal operators. By \cite{BrDoFi:Unitary}
these two relations coincide on normal operators, and
 the conclusion of Theorem~\ref{T.0} implies that they
coincide on all of $\cB(H)$. The outer automorphism $\Phi$ constructed in
\cite{PhWe:Calkin}, as well as the one in \S\ref{S.CH} below, is
\emph{pointwise inner}:  $\Phi(\pi(a))=\Phi(\pi(b))$ implies an inner
automorphism sends $\pi(a)$ to $\pi(b)$. It is not known whether $\sim$ can
differ from the unitary equivalence modulo a compact perturbation
in some model
of set theory. In particular, it is still open whether the Continuum Hypothesis
implies the existence of an automorphism of the Calkin algebra sending the
image of the unilateral shift to its adjoint.
 See \cite{PhWe:Calkin} for a discussion and related open problems.

Theorem~\ref{T.0} belongs to a line of results starting with Shelah's
ground-breaking construction of a model of set theory in which all
automorphisms of the quotient Boolean algebra $\cP(\bbN)/\Fin$ are
trivial (\cite{Sh:Proper}). An equivalent reformulation states that
it is impossible to construct a nontrivial automorphism of
$\cP(\bbN)/\Fin$ without using some additional set-theoretic axiom.
Through the work of Shelah--Stepr\=ans, Velickovic, Just, and the
author this conclusion was extended to many other quotient algebras
$\cP(\bbN)/\cI$. The progress was made possible by replacing Shelah's
intricate forcing construction by the PFA (\cite{ShSte:PFA}) and then
in \cite{Ve:OCA} by Todorcevic's Axiom (\cite[\S8]{To:Partition}) in
conjunction with the Martin's Axiom. A survey of these results can be
found in \cite{Fa:Rigidity}.  See also \cite{KanRe:Ulam} for closely
related rigidity results in the Borel context (cf. \S\ref{S.Borel}
below).

\subsection{Terminology and Notation} \lbl{S.Notation} All the necessary
background on operator algebras can be found e.g., in \cite{Pede:Analysis} or
\cite{We:Set}.
 Throughout
we fix an infinite dimensional separable complex Hilbert space $H$
and an orthonormal basis $(e_n)$.  Let $\pi\colon \cB(H)\to \cC(H)$
be the quotient map.  If $F$ is a closed subspace of $H$ then
$\proj_F$ denotes the orthogonal projection to $F$. Fix an increasing
family of finite-dimensional projections $(\bfR_n)$ such that
$\bigvee_n \bfR_n=I$, and consider a nonincreasing family of
seminorms $ \|a\|_n=\|(I-\bfR_n)a\|$.  Let $\|a\|_{\cK}=\lim_{n\to
\infty} \|a\|_n$. Note that $\|a\|_{\cK}=\|\pi(a)\|$, with the norm
of $\pi(a)$ computed in the Calkin algebra. Projections $P$ and $Q$
are \emph{almost orthogonal} if $PQ$ is compact. This is equivalent
to $QP=(PQ)^*$ being compact.

Let $\cA$, $\cB$ be C*-algebras,   $J_1,J_2$ their ideals and let $\Phi\colon
\cA/J_1\to \cB/J_2$ be a
*-homomorphism. A map $\Psi\colon \cA\to \cB$ such that
  ($\pi_{J_i}$ is the quotient map)
$$
\diagram
\cA \xto[r]^{\Psi}\xto[d]^{\pi_{J_1}} &   \cB\xto[d]^{\pi_{J_2}}\\
\cA/J_1 \xto[r]_{\Phi} &\cB/J_2
\enddiagram
$$
commutes,  is a   \emph{representation}   of $\Phi$. Since we do  not require
$\Psi$ to  be a *-homomorphism, the Axiom of Choice implies every $\Phi$ has a
representation.

For a partition $\vec E$ of $\bbN$ into finite intervals $(E_n)$ let
$\calD[\vec E]$ be the von Neumann algebra of all operators in $\cB(H)$  for
which each $\SPAN\{e_i\mid i\in E_n\}$ is invariant. We always assume $E_n$ are
consecutive, so that $\max(E_n)+1=\min(E_{n+1})$ for each $n$. If $E_n=\{n\}$
then $\calD[\vec E]$ is the \emph{standard atomic masa}: von Neumann algebra of
all operators diagonalized by the standard basis. These FDD (short for `finite
dimensional decomposition') von Neumann algebras play an important role in the
proof of Theorem~\ref{T.0}.
 For $M\subseteq \bbN$ let
$\bP_M^{\vec E}$ (or $\bP_M$ if $\vec E$ is clear from the context) be the
projection to the closed linear span of $\bigcup_{i\in M}\{e_n\mid n\in E_i\}$
and let $\calD_M[\vec E]$ be the ideal $\bP_M\calD[\vec E]\bP_M=\bP_M\calD[\vec
E]$ of $\calD[\vec E]$. It is not difficult to see that an operator $a$ in
$\calD[\vec E]$ is compact if and only if $\lim_i \|\bP^{(\vec
E)}_{\{i\}}a\|=0$.
 The strong operator topology coincides with the product of the norm topology
on the unitary group of $\calD[\vec E]$, $\cU[\vec E]=\prod_i
\cU(E_i)$ and makes it into a compact metric group.

If $\cA$ is a unital C*-algebra then $\cU(\cA)$ denotes its unitary
group. We shall write $\cU[\vec E]$ for $\cU(\calD[\vec E])$ and
$\cU_A[\vec E]$ for $\cU(\calD_A[\vec E])$. Similarly, we shall write
$\cC[\vec E]$ for $\calD[\vec E]/(\calD[\vec E]\cap \cK(H))$. For a
$C^*$-algebra $\calD$ and $r<\infty$ write
\[
\calD_{\leq r}=\{a\in \calD\mid \|a\|\leq r\}.
\]
 The set of
self-adjoint operators in $\calD$ is denoted by $\calD_{\SA}$.

The \emph{spectrum} of a normal operator $b$ in a unital C*-algebra is
$$
\sigma(b)=\{\lambda\in \bbC\mid b-\lambda I\text{ is not invertible}\}.
$$

\subsubsection*{A rough outline of the proof of Theorem~\ref{T.0}}
If $\calD$ is a subset of $\cB(H)$, we say that $\Phi$ is \emph{inner on
$\calD$} if there is an inner automorphism $\Phi'$ of $\cC(H)$ such that the
restrictions of $\Phi$ and $\Phi'$ to $\pi[\calD]$ coincide. In
Theorem~\ref{T.1} we use CH to construct an outer automorphism of the Calkin
algebra whose restriction to each $\calD[\vec E]$ is inner.
 In Theorem~\ref{P.1} we use TA to show that for any outer automorphism
$\Phi$ there is $\vec E$ such that $\Phi$ is not inner on
 $\calD[\vec E]$. Both of these proofs involve the analysis of `coherent families of
unitaries' (\S\ref{S.CH}).

Fix an automorphism of the Calkin algebra $\Phi$. Fix $\vec E$ such that the
sequence $\# E_n$ is nondecreasing. A simple fact that $\Phi$ is inner on
$\calD[\vec E]$ if and only if it is inner on $\calD_M[\vec E]$ for some
infinite $M$ is given in Lemma~\ref{L.trivial.2}. Hence we only need to find an
infinite $M$ such that the restriction of $\Phi$ to $\calD_M[\vec E]$ is inner.
 This is done in Proposition~\ref{P.2}. Its proof proceeds in several stages and it involves
the notion of an $\e$-approximation (with respect to $\|\cdot
\|_{\cK}$) to a representation (see \S\ref{S.Representation}) and the
family $ \cJ^n(\vec E)=\{A\subseteq \bbN\mid \Phi$ has a C-measurable
$2^{-n}$-approximation on $\calD[\vec E]\}$.  In Lemma~\ref{L.cJ-m}
TA is used again to prove that $\cJ^n(\vec E)$ is so large for every
$n$ that  $\bigcap_n \cJ^n(\vec E)$ contains an infinite set $M$.
Jankov, von Neumann uniformization theorem (Theorem~\ref{T.JvN}) is
used to produce a C-measurable representation of~$\Phi$ on
$\calD_M[\vec E]$ as a `limit' of given $2^{-n}$-approximations. This
C-measurable representation is turned into a conjugation by a unitary
in Theorem~\ref{T.2}. This result depends on the Ulam-stability of
approximate
*-homomorphisms (Theorem~\ref{T.approximate}).

 Part of the present proof that deals with FDD von Neumann algebras
owes much to the proof of the `main lifting theorem' from
\cite{Fa:AQ} and a number of elegant improvements from  Fremlin's
account \cite{Fr:Notes}. In particular, the proof of
Claim~\ref{C.stabilizer.0} is based on the proof of
\cite[Lemma~1P]{Fr:Notes} and   \S\ref{S.sigma}  closely follows
\cite[Lemma 3C]{Fr:Notes}.


\section{An outer automorphism from the Continuum Hypothesis}\lbl{S.CH}
We first prove a slight strengthening of the Phillips--Weaver result.
Lem\-ma~\ref{L.Arv}, Lemma~\ref{L.1}, definitions of $\rho$ and $\Delta_I$, and
Lemma~\ref{L.A.4} will be needed in the proof of Theorem~\ref{T.0}.

\begin{thm}\lbl{T.1} The Continuum
Hypothesis implies there is an outer automorphism of the Calkin
algebra. Moreover, the restriction of this automorphism to the
standard atomic masa and to any separable subalgebra is inner.
\end{thm}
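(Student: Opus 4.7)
The plan is to construct the desired outer automorphism $\Phi$ by a transfinite recursion of length $\omega_1$, using CH in the standard way to enumerate all relevant data. Under CH the algebra $\cB(H)$ has cardinality $\aleph_1$, so I may fix:
\begin{enumerate}
\item a continuous increasing chain $(\cA_\alpha)_{\alpha<\omega_1}$ of separable unital C*-subalgebras of $\cB(H)$ exhausting $\cB(H)$, with $\cA_0$ containing the standard atomic masa $\calD$ and such that every separable subalgebra of $\cB(H)$ is contained in some $\cA_\alpha$;
\item an enumeration $(v_\alpha)_{\alpha<\omega_1}$ of $\cU(\cB(H))$.
\end{enumerate}

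At stage $\alpha$ I will produce a unitary $u_\alpha\in\cB(H)$ such that $\Ad(\pi(u_\alpha))$ restricted to $\pi[\cA_\alpha]$ defines an automorphism $\Phi_\alpha$, and these automorphisms are compatible: $\Phi_\beta=\Phi_\alpha\rs\pi[\cA_\beta]$ for $\beta<\alpha$. Taking $u_0=I$ ensures $\Phi\rs\pi[\calD]=\id$, so $\Phi$ is inner on $\calD$. Compatibility says that $\pi(u_\alpha u_\beta^*)$ commutes with $\pi[\cA_\beta]$, i.e.\ $u_\alpha u_\beta^*\in \cA_\beta'+\cK(H)$, so $(u_\alpha)$ is a \emph{coherent family of unitaries} along $(\cA_\alpha)$. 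Once $\Phi:=\bigcup_\alpha\Phi_\alpha$ is built, its restriction to any separable subalgebra $\cA$ lies inside some $\pi[\cA_\alpha]$ and is therefore implemented by $\pi(u_\alpha)$, which gives the moreover clause.

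The successor step at $\alpha+1$ has two tasks. First, extend $u_\alpha$ to a unitary $u_{\alpha+1}$ whose conjugation in the Calkin algebra implements an automorphism on the larger $\pi[\cA_{\alpha+1}]$; this is a Voiculescu/Arveson-type extension step, which I expect to be exactly the content of Lemma~\ref{L.Arv} and Lemma~\ref{L.1}. Second, diagonalize against $v_\alpha$: enrich $\cA_{\alpha+1}$ with a single carefully chosen self-adjoint operator $a_\alpha$ so that $\Ad(\pi(v_\alpha))$ and $\Ad(\pi(u_{\alpha+1}))$ are forced to disagree at $\pi(a_\alpha)$, which is possible because any two unitaries of $\cC(H)$ inducing the same automorphism of a sufficiently rich separable subalgebra must be scalar multiples of each other modulo compacts. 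This diagonalization ensures the final $\Phi$ is not equal to $\Ad(\pi(v_\alpha))$ for any $\alpha$, hence outer.

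The main obstacle is the limit step, which demands a coherence argument. All limits up to $\omega_1$ that require work have cofinality $\omega$, so I choose $\alpha_n\nearrow\alpha$ and build $u_\alpha$ from $(u_{\alpha_n})$ by a telescoping construction. Writing $u_\alpha$ formally as an SOT-product of correction unitaries $w_n=u_{\alpha_n}^*u_{\alpha_{n+1}}$, each approximately localized on a finite block of the FDD compatible with $\cA_{\alpha_n}$, the key point, drawing on the coherent-family machinery in \S\ref{S.CH} and in particular the functionals $\rho$ and $\Delta_I$ introduced there together with Lemma~\ref{L.A.4}, is that the tails of this product can be made arbitrarily small modulo $\cK(H)$. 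Hence a unitary $u_\alpha$ exists with $u_\alpha u_{\alpha_n}^*\in\cA_{\alpha_n}'+\cK(H)$ for every $n$, which is exactly the coherence required at the limit. Once this is established, the transfinite recursion goes through and produces the desired outer automorphism of $\cC(H)$ whose restriction to any separable subalgebra, and in particular to the standard atomic masa, is inner.
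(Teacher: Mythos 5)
Your scheme replaces the paper's directed family of FDD algebras $\calD[\vec E]$ with an $\omega_1$-chain of \emph{arbitrary} separable subalgebras $\cA_\alpha$ exhausting $\cB(H)$, and that is exactly where it breaks: the limit step is asserted, not proved, and the tools you invoke do not apply to it. The quantities $\rho$, $\Delta_I$ and Lemma~\ref{L.A.4} are statements about \emph{diagonal} unitaries $u_\alpha$ acting on block algebras $\calD[\vec E]$; they give no information about a unitary known only to commute modulo $\cK(H)$ with a general separable algebra $\cA_{\alpha_n}$. Your picture of the corrections $w_n=u_{\alpha_n}^*u_{\alpha_{n+1}}$ as ``approximately localized on a finite block of an FDD compatible with $\cA_{\alpha_n}$'' has no justification: Lemma~\ref{L.Arv} only says each element of $\cA_{\alpha_n}$ splits, mod compacts, into two block-tridiagonal pieces for some partition; it does not make elements of the relative commutant of $\cA_{\alpha_n}$ block-localized, and once $\cA_{\alpha_n}$ is large (e.g.\ contains the masa and shift-like operators linking all blocks) that commutant is severely constrained and supplies no such localization. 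The whole design of the paper's proof — indexing by partitions, using only diagonal unitaries, measuring coherence by $\Delta_{E_n}$ via Lemma~\ref{L.A.5.5}, and at limit stages choosing a much coarser partition and splicing the $\alpha^{\xi_n}$ blockwise with scalar phase corrections subject to the clauses (J.1)--(J.4) — exists precisely to make the limit ``threading'' possible; for an arbitrary increasing chain of separable subalgebras no analogous argument is available, and your telescoping product has no reason to converge, even modulo compacts.

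Two secondary points. First, at successor stages you diagonalize only against an enumeration of $\cU(\cB(H))$; inner automorphisms of $\cC(H)$ are implemented by unitaries of the Calkin algebra, which lift only to partial isometries between cofinite-dimensional subspaces (e.g.\ $\Ad\pi(S)$ for the shift is not of the form $\Ad\pi(v)$ with $v\in\cU(\cB(H))$). This can be repaired exactly as in the paper — since your $\Phi$ fixes the atomic masa, Lemma~\ref{L.A.3} (via Johnson--Parrott) reduces innerness to implementation by a diagonal unitary — but you never make that reduction. Second, your assertion that two unitaries of $\cC(H)$ agreeing on a ``sufficiently rich'' separable subalgebra are scalar multiples mod compacts is not what you need and is not free: what the successor step needs is that the relative commutant of $\pi[\cA_\alpha]$ in $\cC(H)$ contains a nonscalar unitary, which for arbitrary separable $\cA_\alpha$ requires Voiculescu's theorem, whereas the paper avoids this entirely because the commutants of its FDD algebras are explicit (blockwise phases). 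Neither of these kills the outline, but the unproved limit-stage coherence does.
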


 If $\vec E$ and
$\vec F$ are partitions of $\bbN$ into finite intervals we
write $\vec E\leq^* \vec F$ if for all but finitely many
$i$ there is $j$ such that $E_i\cup E_{i+1}\subseteq
F_j\cup F_{j+1}$. A family~$\cE$ of partitions is
\emph{cofinal} if for every $\vec F$ there is $\vec E\in
\cE$ such that $\vec F\leq^* \vec E$.

Let ${\bbT}$ denote the circle group, $\{z\in \bbC\mid |z|=1\}$,   
and let $\cUN$ be its countable power. It is
isomorphic to the unitary group of the standard atomic masa. For
$\alpha\in\cUN$ let $u_\alpha$ be the unitary operator on $H$ that sends $e_n$
to $\alpha(n)e_n$. For a unitary $u$ let $\Psi_u$ be the conjugation by $u$,
$\Psi_u(a)=uau^*$ (usually denoted by $\Ad u$ in the operator algebras literature.) 
If $u=u_\alpha$ we write $\Psi_\alpha$ for
$\Psi_{u_\alpha}$. We say that $\Psi_\alpha$ and $\Psi_\beta$ \emph{agree
modulo compacts} on $\calD$ if $\Psi_\alpha(a)-\Psi_\beta(a)$ is compact for
every $a\in \calD$.

Given  $\vec E$ define two coarser partitions: $\vec E^{\even}$, whose entries
are $E_{2n}\cup E_{2n+1}$ and $\vec E^{\odd}$, whose entries are $E_{2n-1}\cup
E_{2n}$ (with $E_{-1}=\emptyset$). Let
$$
\cF[\vec E]=\calD[\vec E^{\even}]\cup \calD[\vec E^{\odd}].
$$
I have proved Lemma~\ref{L.Arv} below using the methods of \cite{Arv:Notes}.
George  Elliott pointed out that the proof of this lemma (in a more general
setting) is contained in the proof of
 \cite[Theorem~3.1]{Ell:Derivations}, as remarked in  \cite{Ell:Ideal}.

\begin{lemma}\lbl{L.Arv}
For a sequence $(a_n)$ in $\cB(H)$ there are a partition $\vec E$,  $a_n^0\in
\calD[\vec E^{\even}]$ and $a_n^1\in \calD[\vec E^{\odd}]$ such that
$a_n-a_n^0-a_n^1$ is compact for each~$n$.
\end{lemma}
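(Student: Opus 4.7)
The plan is to construct the partition $\vec E$ by a diagonal argument so that each $a_n$ is essentially band-diagonal (bandwidth $1$) with respect to the block decomposition of $\vec E$, and then apply the standard even/odd splitting. Write $Q_i=\bP_{\{i\}}^{\vec E}$ once $\vec E$ has been fixed, and $S_k=\sum_{i<k}Q_i$. Each $S_k$ and $Q_i$ are finite-rank, so $a_n Q_i$ and $Q_i a_n$ are compact, and
\[
\lim_{m\to\infty}\bigl(\|(I-S_m)a_n Q_i\|+\|Q_i a_n(I-S_m)\|\bigr)=0
\]
for every $n$ and $i$. I would pick $E_0,E_1,\ldots$ inductively: having fixed $E_0,\ldots,E_i$ (and thus $S_{i+1}$ and $Q_i$), extend the next consecutive interval $E_{i+1}$ far enough that, with $S_{i+2}=S_{i+1}+Q_{i+1}$,
\[
\|(I-S_{i+2})\,a_n\,Q_i\|+\|Q_i\,a_n\,(I-S_{i+2})\|<2^{-i}\qquad\text{for every }n\leq i.
\]
This is achievable at each stage because only finitely many conditions are in play.

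Given such a $\vec E$, write
\[
a_n=d_n+\sum_{|i-j|=1}Q_i a_n Q_j+r_n,\qquad d_n:=\sum_i Q_i a_n Q_i,\quad r_n:=\sum_{|i-j|\geq 2}Q_i a_n Q_j.
\]
Grouping $r_n=\sum_k R_k$ by $k=\min(i,j)$ yields
\[
R_k=Q_k a_n(I-S_{k+2})+(I-S_{k+2})a_n Q_k,
\]
which is finite-rank with $\|R_k\|<2^{1-k}$ for $k\geq n$, so $r_n$ is a norm-convergent series of finite-rank operators, hence compact. Now let $P^{\even}_k=Q_{2k}+Q_{2k+1}$ and $P^{\odd}_k=Q_{2k-1}+Q_{2k}$ (with $Q_{-1}=0$). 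A direct expansion shows that in
\[
\sum_k P^{\even}_k a_n P^{\even}_k+\sum_k P^{\odd}_k a_n P^{\odd}_k
\]
every diagonal block $Q_i a_n Q_i$ is counted twice and every near off-diagonal $Q_i a_n Q_{i\pm 1}$ exactly once, so the sum equals $d_n+(a_n-r_n)$. Setting
\[
a_n^0:=\sum_k P^{\even}_k a_n P^{\even}_k\in\calD[\vec E^{\even}],\qquad a_n^1:=\sum_k P^{\odd}_k a_n P^{\odd}_k-d_n\in\calD[\vec E^{\odd}]
\]
(the second inclusion uses $d_n\in\calD[\vec E]\subseteq\calD[\vec E^{\odd}]$) yields $a_n-a_n^0-a_n^1=r_n\in\cK(H)$, as required.

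The only real obstacle is confirming compactness of $r_n$, which here is immediate from the summable off-diagonal estimates built into the construction of $\vec E$; no input beyond compactness of $aQ$ for finite-rank $Q$ is used, and the even/odd bookkeeping is purely algebraic.
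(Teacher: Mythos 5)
Your proof is correct and follows essentially the same route as the paper's: choose $\vec E$ recursively so that each $a_n$ becomes block-tridiagonal with respect to $\vec E$ modulo a norm-summable (hence compact) remainder, then distribute the tridiagonal part between $\calD[\vec E^{\even}]$ and $\calD[\vec E^{\odd}]$. The only difference is bookkeeping: the paper assigns the blocks $Q_i a Q_j$ with $|i-j|\le 1$ directly to $a^0$ and $a^1$, whereas you compress by the even and odd block projections and subtract the doubled diagonal, which yields the same decomposition.
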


\begin{proof}  For $A\subseteq \bbN$ write $\bP^{(e_n)}_A$ for
the projection to the closed linear span of $\{e_i\mid i\in A\}$. Fix
$m\in \bbN$ and $\e>0$. Since $a\bP_{[0,m)}$ is compact, we can find
$n>m$ large enough to have $\|\bP_{[n,\infty)}a\bP_{[0,m)}\|<\e$ and
similarly $\|\bP_{[n,\infty)}a^*\bP_{[0,m)}\|<\e$. Therefore
$\|\bP_{[0,m)}a\bP_{[n,\infty)}\|<\e$ as well. Recursively find a
strictly increasing $f\colon \bbN\to\bbN$ such that for all $m\leq n$
and  $i\leq n$ we have $\|\bP_{[f(n+1),\infty)}a_i\bP_{[0,f(m))}\|<2^{-n}$
and $\|\bP_{[0,f(m))}a_i\bP_{[f(n+1),\infty)}\|<2^{-n}$. We shall
check that $\vec E$ defined by $E_n=[f(n),f(n+1))$ is as required.
Write $Q_n=\bP_{[f(n),f(n+1))}$ (with $f(0)=0$). Fix $a=a_i$ and
define
\begin{align*}
a^0&=\textstyle\sum_{n=0}^\infty
(Q_{2n}aQ_{2n}+Q_{2n}aQ_{2n+1}+Q_{2n+1}aQ_{2n})\\
a^1&=\textstyle\sum_{n=0}^\infty
(Q_{2n+1}aQ_{2n+1}+Q_{2n+1}aQ_{2n+2}+Q_{2n+2}aQ_{2n+1}).
\end{align*}
Then $a^0\in \calD[\vec E^{\even}]$, $a^1\in \calD[\vec E^{\odd}]$. Let
$c=a-a^0-a^1$. For every $n$ we have 
\begin{align*}
\textstyle\|\bP_{[f(n),\infty)}c\|&\leq
\left\|\textstyle\sum_{i=n}^\infty \bP_{[f(i),\infty)} a
\bP_{[0,f(i-1))}\right\|+\left\|\textstyle\sum_{i=n+1}^\infty
\bP_{[f(n),f(i))}a\bP_{[f(i+1),\infty)}\right\|\\
& \leq 2^{-n+2}+2^{-n+1}, 
\end{align*}
and therefore
 $c$ is compact.
\end{proof}

 Whenever possible we collapse the subscripts/superscripts and write e.g., $\Psi_\xi$ for
$\Psi_{\alpha^\xi}$ (which is of course
$\Psi_{u_{\alpha^\xi}}$).

\begin{lemma} \lbl{L.1} Assume $(\vec E^\xi)_{\xi\in \Lambda}$  is a
directed cofinal family of partitions and~$\alpha^\xi$, $\xi\in \Lambda$, are
such that $\Psi_\eta$ and $\Psi_\xi$ agree modulo compacts on $\cF[\vec E^\xi]$
for $\xi\leq \eta$. Then there is an automorphism $\Phi$ of $\cC(H)$ such that
$\Psi_\xi$ is a representation of $\Phi$ on $\cF[\vec E^\xi]$ for every $\xi\in
\Lambda$. Moreover, $\Phi$ is unique.
\end{lemma}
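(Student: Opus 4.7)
The plan is to define $\Phi$ on $\cC(H)$ by $\Phi(\pi(a)) := \pi(\Psi_\xi(a))$ for any $\xi \in \Lambda$ large enough relative to $a$, and then to verify well-definedness, the *-homomorphism properties, bijectivity, and uniqueness. Given $a \in \cB(H)$, I would first apply Lemma~\ref{L.Arv} to obtain a partition $\vec E$ and a decomposition $a = a^0 + a^1 + c$ with $a^0 \in \calD[\vec E^{\even}]$, $a^1 \in \calD[\vec E^{\odd}]$, and $c$ compact; cofinality then yields $\xi \in \Lambda$ with $\vec E \leq^* \vec E^\xi$. The key refinement step is to split $a^0$ further along $\vec E^\xi$: for all but finitely many $n$, the block $E_{2n} \cup E_{2n+1}$ sits inside some $E^\xi_{j_n} \cup E^\xi_{j_n+1}$, and grouping the restrictions of $a^0$ according to the parity of $j_n$ writes $a^0$, modulo a compact, as $a^{0,0} + a^{0,1}$ with $a^{0,0} \in \calD[(\vec E^\xi)^{\even}]$ and $a^{0,1} \in \calD[(\vec E^\xi)^{\odd}]$. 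An analogous split of $a^1$ produces a total of four summands, each lying in $\cF[\vec E^\xi]$, whose sum is $a$ modulo a compact.

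For well-definedness, take any $\eta \geq \xi$ in $\Lambda$: the hypothesis gives $\Psi_\xi(b) \equiv \Psi_\eta(b) \pmod{\cK(H)}$ for each of the four summands $b$; summing and noting that $\Psi_\xi,\Psi_\eta$ preserve the compacts yields $\pi(\Psi_\xi(a)) = \pi(\Psi_\eta(a))$, so by directedness of $\Lambda$ this value is independent of the particular $\xi$ above the threshold. Independence of the representative of $\pi(a)$ is immediate since $\Psi_\xi$ preserves $\cK(H)$. To see that $\Phi$ is a *-homomorphism I would use directedness once more to pick a single $\xi$ large enough for $a$, $b$, $a+b$, $ab$, and $a^*$ simultaneously, on which $\Phi$ coincides with the honest *-homomorphism $\pi \circ \Psi_\xi$. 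Injectivity follows because $\Psi_\xi$ preserves $\cK(H)$; and for surjectivity, given $y = \pi(b)$ with $\xi$ appropriate for $b$, I set $a := u_{\alpha^\xi}^* b u_{\alpha^\xi}$ and observe that, since $u_{\alpha^\xi}$ lies in the atomic masa and hence commutes with every block projection of $\vec E^\xi$, conjugation preserves the FDD structure, so $\xi$ also works for $a$ and $\Phi(\pi(a)) = \pi(\Psi_\xi(a)) = \pi(b)$.

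That $\Psi_\xi$ represents $\Phi$ on $\cF[\vec E^\xi]$ is immediate from the construction, since $\vec E^\xi$ itself serves as a decomposition partition (no refinement being needed) for any $a$ already in $\cF[\vec E^\xi]$. Uniqueness follows because any candidate $\Phi'$ must agree with $\pi\circ\Psi_\xi$ on each $\cF[\vec E^\xi]$, and every element of $\cB(H)$ is a compact perturbation of a sum of elements of $\cF[\vec E^\xi]$ for some $\xi$. The main obstacle I anticipate is the four-fold refinement above: the parity of $j_n$ in the containment $E_{2n} \cup E_{2n+1} \subseteq E^\xi_{j_n} \cup E^\xi_{j_n+1}$ depends on $n$, so the two-piece decomposition of Lemma~\ref{L.Arv} must be split into four pieces to land inside $\cF[\vec E^\xi]$, and one needs to check carefully that each resulting piece does lie in the advertised FDD algebra.
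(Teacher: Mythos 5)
Your proposal is correct and follows essentially the same path as the paper: apply Lemma~\ref{L.Arv} together with cofinality to define $\Phi(\pi(a))=\pi(\Psi_\xi(a))$ for $\xi$ large enough, then use directedness to verify well-definedness, the *-homomorphism identities, bijectivity, and uniqueness. The four-fold refinement you single out as the main obstacle is sound and simply makes explicit what the paper compresses into the phrase ``well-defined by the agreement of $\Psi_\xi$'s''; the only other (cosmetic) difference is that for surjectivity the paper builds a global inverse $\Phi^*$ from the adjoint unitaries, whereas you produce a preimage of each $\pi(b)$ directly.
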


\begin{proof}  By Lemma~\ref{L.Arv}, for each $a\in \cB(H)$
there is a partition $\vec E$ with  $a_0\in \calD[\vec E^{\even}]$ and  $a_1\in
\calD[\vec E^{\odd}]$ such that $a-a_0-a_1$ is compact. Fix  $\vec F=\vec E^\xi$
such that $\vec E\leq^* \vec F$ and  let $\Phi(\pi(a))=\pi(\Psi_{\vec F}(a))$.

Then $\Phi$ is well-defined by the agreement of $\Psi_\xi$'s. For every pair of
operators $a,b$ there is a single partition $\vec E$ with $a_0$ and $b_0$ in
$\calD[\vec E^{\even}]$ and $a_1$ and $b_1$ in $\calD[\vec E^{\odd}]$ such that
both $a-a_0-a_1$ and $b-b_0-b_1$ are compact. This readily implies $\Phi$ is a
*-homomorphism.

The inverse maps $\Psi_\xi^*=\Psi_{(u_{\alpha^\xi})^*}$ also satisfy the
assumptions of the lemma and there is a
*-homomorphism $\Phi^*$ such that $\Psi_\xi^*$ is a representation
of $\Phi^*$ on $\cF[\vec E^\xi]$ for every $\xi$. Then
 $\Phi\Phi^*=\Phi^*\Phi$ is the identity on
$\cC(H)$, hence $\Phi$ is an automorphism. The uniqueness follows from
Lemma~\ref{L.Arv}.
\end{proof}

 Let ${\bbT}$ be the
unitary group of the 1-dimensional complex Hilbert space.  Recall that every
inner automorphism of $\cC(H)$ has a representation of the form $\Psi_u$ for
$u$ which is  an isometry between subspaces of $H$ of finite codimension.  The
proof of the following lemma was suggested by Nik Weaver.

\begin{lemma} \lbl{L.A.3}
Assume  $u$ and $v$ are isometries between subspaces of $H$ of finite
codimension. If $\Psi_u(a)-\Psi_v(a)$ is compact for every $a$
diagonalized by $(e_n)$, then there is $\alpha\in ({\bbT})^{\bbN}$
for which the linear map $w$ defined by $w(e_n)=\alpha(n)v(e_n)$ for
all $n$ is such that $\Psi_w(a)-\Psi_u(a)$ is compact for all $a$ in
$\cB(H)$.
\end{lemma}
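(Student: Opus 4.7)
The plan is to reduce the lemma to the Johnson--Parrott theorem (every bounded operator commuting modulo compacts with the standard atomic masa lies in the atomic masa modulo compacts) and then to choose $\alpha$ that cancels the phases on the diagonal of $u^*v$.

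First I would extend $u$ and $v$ by arbitrary partial isometries on the finite-dimensional complements of their domains and ranges to obtain unitaries on $H$. Such extensions differ from the originals by finite rank perturbations, so both the hypothesis and the conclusion are preserved. Assume therefore that $u,v$ are unitary and set $t=u^*v$. Since $u^*(uau^*-vav^*)u=a-tat^*$ and $t$ is unitary, the hypothesis rewrites as $[t,a]\in \cK(H)$ for every diagonal $a$; equivalently, $\pi(t)$ commutes with the image of the atomic masa in $\cC(H)$. By Johnson--Parrott, this forces a decomposition $t=D+K$ with $D=\sum_n \gamma(n) e_n e_n^*$ diagonal and $K\in \cK(H)$. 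Because $t$ is unitary, $|\gamma(n)|\to 1$.

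Now set $\alpha(n)=\overline{\gamma(n)}/|\gamma(n)|$ whenever $\gamma(n)\neq 0$, and $\alpha(n)=1$ for the finitely many remaining $n$. Then $D u_\alpha$ is diagonal with entries $|\gamma(n)|\to 1$, so $D u_\alpha - I\in \cK(H)$. Writing $w=v u_\alpha$ for the map appearing in the lemma, we obtain
\[
u^* w = t u_\alpha = D u_\alpha + K u_\alpha = I + \kappa
\]
for some $\kappa\in \cK(H)$, hence $w=u+u\kappa$. For any $a\in \cB(H)$, expanding $waw^*-uau^*$ produces three terms each containing $u\kappa$ or $\kappa^* u^*$ as a factor, so $\Psi_w(a)-\Psi_u(a)\in \cK(H)$, as required.

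The main obstacle is the Johnson--Parrott step. For the atomic masa it is relatively direct: testing the commutator hypothesis against characteristic projections $\bP^{(e_n)}_A$ gives $\bP^{(e_n)}_A t \bP^{(e_n)}_{A^c}\in \cK(H)$ for every $A\subseteq \bbN$, and a gliding-hump argument then shows that if the off-diagonal part of $t$ were non-compact, one could iteratively extract an infinite $A$ witnessing $\bP^{(e_n)}_A t \bP^{(e_n)}_{A^c}\notin \cK(H)$, a contradiction. This is the only nontrivial ingredient in the proof.
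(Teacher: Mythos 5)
Your argument is, in substance, the paper's own: both proofs rest on the Johnson--Parrott theorem that $\pi[\cA]$ (the image of the atomic masa) is maximal abelian in $\cC(H)$, and your entrywise normalization $\alpha(n)=\overline{\gamma(n)}/|\gamma(n)|$ is exactly the paper's polar-decomposition step (there $w_0=bw_1$ in $\cA$ with $\pi(b)=I$ and $w_1=u_\alpha$), after which $w=vu_\alpha$ and $\pi(w)=\pi(u)$ give the conclusion. One preliminary step, however, fails as stated: you cannot in general extend $u$ and $v$ by partial isometries on the finite-dimensional complements of their domains and ranges to unitaries of $H$. That requires those complements to have equal dimension, and the Fredholm index is an obstruction which no finite-rank (or compact) perturbation can remove; for instance $u=v=S$, the unilateral shift, satisfies the hypotheses of the lemma but is not a finite-rank perturbation of any unitary. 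The repair is immediate, and is what the paper does tacitly: since $u^*u$, $uu^*$, $v^*v$, $vv^*$ are projections of finite corank, $\pi(u)$ and $\pi(v)$ are unitaries of the Calkin algebra, so $\pi(u^*v)$ still commutes with $\pi[\cA]$, Johnson--Parrott gives $u^*v=D+K$ with $D$ diagonal and $|\gamma(n)|\to 1$, and from $\pi(u)^*\pi(w)=\pi(Du_\alpha)=I$ together with $\pi(uu^*)=I$ you get $\pi(w)=\pi(u)$, hence $w-u\in\cK(H)$ and $\Psi_w(a)-\Psi_u(a)\in\cK(H)$ for all $a$. Finally, since you (like the paper) invoke Johnson--Parrott as the key ingredient, the closing gliding-hump sketch is not needed; note also that, as written, it glosses the case in which the non-compactness of the off-diagonal part of $t$ is carried by finite diagonal blocks $\bP^{(e_n)}_F t\,\bP^{(e_n)}_F$, where an extra step (e.g.\ an averaging argument over subsets of each block) is required before an infinite $A$ with $\bP^{(e_n)}_A t\,\bP^{(e_n)}_{A^c}\notin\cK(H)$ can be extracted.
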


\begin{proof}
Let $\cA=\{a\in \cB(H)\mid a$ is diagonalized by $(e_n)\}$. By our
assumption, $\pi(v^*u)$ commutes with $\pi(a)$ for all $a\in \cA$.
 Since $\pi[\cA]$ is, by \cite{JohPar},  a maximal abelian self-adjoint
subalgebra of the Calkin algebra we have  $\pi(w_0)=\pi(v^*u)$ for
some $w_0\in \cA$. Let $w_0=bw_1$ be the polar decomposition of $w_0$
in $\cA$. Since $\pi(b)=I$ we have $\pi(w_1)=\pi(v^*u)$. Since the
Fredholm index of $w_1$ is 0 and $\pi(w_1)$ is a unitary, we may
assume $w_1$ is a unitary. Fix $\alpha\in \cUN$ such that
$w_1=u_{\alpha}$, i.e., $w_1(e_n)=\alpha(n)e_n$ for all $n$.

Let $w=vw_1$. Then $\pi(w)=\pi(vv^*u)=\pi(u)$ hence
$\Psi_w(a)-\Psi_u(a)$ is compact for all $a\in \cB(H)$. Also, for
each $n$ we have $w(e_n)=vw_1(e_n)=v(\alpha(n)e_n)=\alpha(n)v(e_n)$.
\end{proof}

For $i,j$ in $\bbN$ and $\alpha,\beta$ in $\cUN$  let
\[
\rho(i,j,\alpha,\beta)=
|\alpha(i)\overline{\alpha(j)}-\beta(i)\overline{\beta(j)}|.
\]
 For
fixed $i,j$ the function $f\equiv \rho(i,j,\cdot,\cdot)$
satisfies the triangle inequality:
\[
f(\alpha,\beta)+f(\beta,\gamma)\geq f(\alpha,\gamma).
\]
We also have
\[
\rho(i,j,\alpha,\beta)=|\rho(i,j,\alpha,\beta)\alpha(j)\overline{\beta(i)}|=|\alpha(i)\overline{\beta(i)}-\alpha(j)\overline{\beta(j)}|,
\]
hence  for fixed $\alpha,\beta$ the function $f_1\equiv
\rho(\cdot,\cdot,\alpha,\beta)$ also satisfies the triangle
inequality:
\[
f_1(i,j)+f_1(j,k)\geq f_1(i,k).
\]
 For $I\subseteq \bbN$ and $\alpha$ and $\beta$ in
$\cUN$ write
\[
 \Delta_I(\alpha,\beta)=\sup_{i\in I,\  j\in I}
\rho(i,j,\alpha,\beta).
\]
The best picture $\Delta_I$ is  furnished by \eqref{L.A.5.5.new} of the following lemma. 

\begin{lemma}\lbl{L.A.5.5}
For all $I,\alpha,\beta$ we have
\begin{enumerate}
\item  $\Delta_I(\alpha,\beta)\leq
2\sup_{i\in I}|\alpha(i)-\beta(i)|$.
\item
 $\Delta_I(\alpha,\beta)\geq\sup_{j\in I}|\alpha(j)-\beta(j)|-\inf_{i\in
 I}|\alpha(i)-\beta(i)|$, in particular if
 $\alpha(i_0)=\beta(i_0)$ for some $i_0\in I$ then
 $\Delta_I(\alpha,\beta)\geq \sup_{j\in
 I}|\alpha(j)-\beta(j)|$.
\item If $z\in {\bbT}$ then
$\Delta_I(\alpha,\beta)=\Delta_I(\alpha,z\beta)$.
\item If $I\cap J$ is nonempty then $\Delta_{I\cup
J}(\alpha,\beta)\leq
\Delta_I(\alpha,\beta)+\Delta_J(\alpha,\beta)$.
\item \lbl{L.A.5.5.new}
$
\inf_{z\in \bbT}\sup_{i\in I} |\alpha(i)-z\beta(i)|\leq  
\Delta_I(\alpha,\beta)\leq 2
\inf_{z\in \bbT}\sup_{i\in I} |\alpha(i)-z\beta(i)|$. 
\end{enumerate}
 \end{lemma}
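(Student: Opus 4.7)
\medbreak\noindent\textbf{Proof plan for Lemma~\ref{L.A.5.5}.} The strategy is to exploit the two equivalent expressions for $\rho$ noted just before the statement: the ``product'' form $\rho(i,j,\alpha,\beta)=|\alpha(i)\overline{\alpha(j)}-\beta(i)\overline{\beta(j)}|$ is suited to parts (1), (3), (4), while the ``difference'' form $\rho(i,j,\alpha,\beta)=|\alpha(i)\overline{\beta(i)}-\alpha(j)\overline{\beta(j)}|$ makes it natural to introduce the auxiliary function $\gamma(i)=\alpha(i)\overline{\beta(i)}\in\bbT$, for which $|\alpha(i)-\beta(i)|=|\gamma(i)-1|$ and $\rho(i,j,\alpha,\beta)=|\gamma(i)-\gamma(j)|$. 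With $\gamma$ in hand, parts (2) and (5) become statements about oscillation versus displacement of a $\bbT$-valued function, which are easy to handle by the scalar triangle inequality.

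For (1), I would add and subtract $\beta(i)\overline{\alpha(j)}$ inside the product form to split $\rho(i,j,\alpha,\beta)$ into two terms, each of modulus at most $\sup_{k\in I}|\alpha(k)-\beta(k)|$. For (2), rewrite $\rho$ as $|\gamma(i)-\gamma(j)|$ and apply the reverse triangle inequality $|\gamma(j)-\gamma(i)|\geq |\gamma(j)-1|-|\gamma(i)-1|$; taking $\sup$ over $j$ and $\inf$ over $i$ gives the bound, and the particular case $\alpha(i_0)=\beta(i_0)$ makes $\inf_i|\alpha(i)-\beta(i)|=0$. For (3), observe that in the product form the factor $z\overline z=|z|^2=1$ cancels, so $\Delta_I(\alpha,z\beta)=\Delta_I(\alpha,\beta)$ identically. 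For (4), pick $k\in I\cap J$ and use the already noted triangle inequality $\rho(i,j,\alpha,\beta)\leq \rho(i,k,\alpha,\beta)+\rho(k,j,\alpha,\beta)$ (with symmetry $\rho(i,j,\cdot,\cdot)=\rho(j,i,\cdot,\cdot)$ coming from complex conjugation); the three cases $i,j\in I$, $i,j\in J$, and $i\in I\setminus J$, $j\in J\setminus I$ each give a bound by $\Delta_I(\alpha,\beta)+\Delta_J(\alpha,\beta)$.

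For (5), the upper bound is a combination: by (3) applied to $z\beta$ in place of $\beta$, $\Delta_I(\alpha,\beta)=\Delta_I(\alpha,z\beta)$, and then by (1) the latter is bounded by $2\sup_{i\in I}|\alpha(i)-z\beta(i)|$; taking the infimum over $z\in\bbT$ yields the right-hand inequality. For the lower bound, pick any $i_0\in I$ and set $z_0=\alpha(i_0)\overline{\beta(i_0)}\in\bbT$, so that $z_0\beta(i_0)=\alpha(i_0)$ and hence $\inf_{i\in I}|\alpha(i)-z_0\beta(i)|=0$. Applying (2) to the pair $(\alpha,z_0\beta)$ and using (3), we get
\[
\Delta_I(\alpha,\beta)=\Delta_I(\alpha,z_0\beta)\geq \sup_{j\in I}|\alpha(j)-z_0\beta(j)|\geq \inf_{z\in\bbT}\sup_{j\in I}|\alpha(j)-z\beta(j)|,
\]
completing the proof.

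No single step is a real obstacle; the only mild subtlety is being disciplined about which form of $\rho$ to use in each item, and remembering to pre-multiply $\beta$ by the correct unit scalar before invoking (1) or (2) in the proof of (5).
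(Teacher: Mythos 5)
Your proof is correct. Parts (1)--(4) follow essentially the same route as the paper: the paper works with the identity $\rho(i,j,\alpha,\beta)=|\overline{\beta(i)}(\alpha(i)-\beta(i))+\overline{\beta(j)}(\beta(j)-\alpha(j))|$ and the constraint $|\beta(i)|=|\beta(j)|=1$ to obtain $\left||\alpha(i)-\beta(i)|-|\alpha(j)-\beta(j)|\right|\leq\rho(i,j,\alpha,\beta)\leq|\alpha(i)-\beta(i)|+|\alpha(j)-\beta(j)|$, which gives (1) and (2) at once; your $\gamma(i)=\alpha(i)\overline{\beta(i)}$ reformulation is a clean repackaging of exactly that computation (indeed $\rho(i,j,\alpha,\beta)=|\gamma(i)-\gamma(j)|$ and $|\alpha(i)-\beta(i)|=|\gamma(i)-1|$), and your arguments for (3) and (4) are the paper's. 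The real difference is in (5): the paper proves it by a direct computation, observing that for fixed $j\in I$, $\Delta_I(\alpha,\beta)\geq\sup_{i\in I}|\alpha(i)-(\alpha(j)\overline{\beta(j)})\beta(i)|$ (lower bound) and that $|\alpha(i)-(\alpha(j)\overline{\beta(j)})\beta(i)|\leq|\alpha(i)-z\beta(i)|+|\alpha(j)-z\beta(j)|$ for all $z$ (upper bound), essentially redoing the algebra of (1) and (2) for the shifted pair; you instead formally invoke (3) to replace $\beta$ by $z\beta$ and then apply (1) (for the upper bound) and the special case of (2) (for the lower bound, with $z_0=\alpha(i_0)\overline{\beta(i_0)}$ killing the infimum term). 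Your route is more modular and makes the logical dependence of (5) on the preceding items explicit, at the cost of being slightly less self-contained; both are perfectly valid, and the underlying inequalities are the same.
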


\begin{proof} 
Since
\begin{align*}
\rho(i,j,\alpha,\beta)=|\alpha(i)\overline{\beta(i)}-\alpha(j)\overline{\beta(j)}|
&=|\overline{\beta(i)}
(\alpha(i)-\beta(i))+\overline{\beta(j)}(\beta(j)-\alpha(j))|
\end{align*}
and $|\beta(i)|=|\beta(j)|=1$, we have
\[
 \left||\alpha(i)-\beta(i)|-|\alpha(j)-\beta(j)|\right|\leq
\rho(i,j,\alpha,\beta)\leq |\alpha(i)-\beta(i)|+|\alpha(j)-\beta(j)|.
\]
This implies
 \begin{align*}
 \Delta_I(\alpha,\beta)&=\sup_{i\in I,\ j\in I}
 \rho(i,j,\alpha,\beta)\\
 &\leq \sup_{i\in I,\ j\in
 I}(|\alpha(i)-\beta(i)|+|\alpha(j)-\beta(j)|)
 \leq 2\sup_{i\in I} |\alpha(i)-\beta(i)|
 \end{align*}
and (1) follows. For (2) we have
\begin{align*}
\Delta_I(\alpha,\beta)&=\sup_{i\in I,\ j\in
I}\rho(i,j,\alpha,\beta)\\
&\geq \sup_{i\in I,\ j\in I}
||\alpha(i)-\beta(i)|-|\alpha(j)-\beta(j)||\\
&=\sup_{i\in I}|\alpha(i)-\beta(i)|-\inf_{i\in
I}|\alpha(i)-\beta(i)|.
\end{align*}
Clause (3) is an immediate consequence of the equality
$\rho(i,j,\alpha,z\beta)=\rho(i,j,\alpha,\beta)$. It is not difficult
to see that in order to prove (4), we only need to check
$\rho(i,j,\alpha,\beta)\leq
\Delta_I(\alpha,\beta)+\Delta_J(\alpha,\beta)$ for all $i\in I$ and
$j\in J$. Pick $k\in I\cap J$. Then we have
\[
\rho(i,j,\alpha,\beta)\leq
\rho(i,k,\alpha,\beta)+\rho(k,j,\alpha,\beta) \leq
\Delta_I(\alpha,\beta)+\Delta_J(\alpha,\beta),
\]
completing the proof.

Now we prove \eqref{L.A.5.5.new}. By the definition, for every $j\in I$ we have 
$\Delta_I(\alpha,\beta)\geq \sup_{i\in I} |\alpha(i)-(\alpha(j)\overline{\beta(j)})\beta(i)|$
and therefore $\Delta_I(\alpha,\beta)\geq \inf_{z\in \bbT} \sup_{i\in I}|\alpha(i)-\beta(i)|$. 
On the other hand, for all $i$ and $j$ we have 
\[
|\alpha(i)-(\alpha(j)\overline{\beta(j)})\beta(i)|\leq 
|\alpha(i)-z\beta(i)|+|\alpha(j)-z\beta(j)|
\]
 for every $z\in \bbT$, 
which immediately implies the other inequality. 
\end{proof}

Recall that for $\alpha\in \cUN$ by $u_\alpha$ we denote the unitary
such that $u_\alpha(e_n)=\alpha(n)e_n$ and that
$\Psi_\alpha=\Psi_{u_\alpha}$ is the conjugation by $u_\alpha$.

\begin{lemma} \lbl{L.A.4}
\begin{enumerate}
\item [(a)] If  $\lim_n |\alpha(n)-\beta(n)|=0$ then  $\Psi_\alpha(a)-\Psi_\beta(a)$ is compact for all $a\in
\cB(H)$.
\item [(b)] The difference $\Psi_\alpha(a)-\Psi_\beta(a)$ is compact for all $a\in
\calD[\vec E]$ if and only if $\limsup_n \Delta_{E_n}(\alpha,\beta)=0$.
\end{enumerate}
\end{lemma}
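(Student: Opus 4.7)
The plan is as follows. For (a), observe that $u_\alpha-u_\beta$ is the diagonal operator with $n$-th entry $\alpha(n)-\beta(n)$, hence compact under the hypothesis $\lim_n|\alpha(n)-\beta(n)|=0$. The identity
\[
\Psi_\alpha(a)-\Psi_\beta(a) = u_\alpha a(u_\alpha-u_\beta)^* + (u_\alpha-u_\beta) a u_\beta^*
\]
exhibits the difference as a sum of two operators each carrying a compact factor, and the conclusion follows from the ideal property of $\cK(H)$.

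For the $(\Leftarrow)$ half of (b), I would pass to the block decomposition of $\calD[\vec E]$. For $a\in \calD[\vec E]$ the difference $\Psi_\alpha(a)-\Psi_\beta(a)$ is block diagonal with respect to $\vec E$, so it suffices to show the norm of its $E_n$-block tends to $0$. Let $D_n=u_\alpha u_\beta^*\rs E_n$, the diagonal unitary with entries $\alpha(i)\overline{\beta(i)}$ for $i\in E_n$; this block then equals $D_n b_n D_n^* - b_n$, where $b_n = \Psi_\beta(a)\rs E_n$ satisfies $\|b_n\|\leq \|a\|$. Using the equality $|\alpha(i)\overline{\beta(i)}-z|=|\alpha(i)-z\beta(i)|$ together with Lemma~\ref{L.A.5.5}\iref{L.A.5.5.new}, I would choose $z_n\in \bbT$ with $\|D_n-z_n I\|\leq \Delta_{E_n}(\alpha,\beta)$. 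Writing $R_n = D_n - z_n I$ and expanding yields a bound of $(2+\|R_n\|)\|R_n\|\|a\|$ on the block norm, which tends to $0$ by hypothesis; so $\Psi_\alpha(a)-\Psi_\beta(a)$ is a norm limit of finite-rank operators and therefore compact.

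For the $(\Rightarrow)$ half I argue by contrapositive: if $\limsup_n \Delta_{E_n}(\alpha,\beta)>\e>0$, pick infinitely many $n$ and $i_n, j_n\in E_n$ with $\rho(i_n,j_n,\alpha,\beta)>\e$, and take $a\in \calD[\vec E]$ to be the block-diagonal norm-one operator whose $E_n$-block maps $e_{j_n}\mapsto e_{i_n}$ (and annihilates the other basis vectors) for the chosen $n$, zero on the rest. A direct calculation gives
\[
(\Psi_\alpha(a)-\Psi_\beta(a))e_{j_n}=\bigl(\alpha(i_n)\overline{\alpha(j_n)}-\beta(i_n)\overline{\beta(j_n)}\bigr)e_{i_n},
\]
which has norm $\rho(i_n,j_n,\alpha,\beta)>\e$; since $(e_{j_n})$ is weakly null, $\Psi_\alpha(a)-\Psi_\beta(a)$ cannot be compact. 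The only step carrying genuine content is the $(\Leftarrow)$ direction of (b): one cannot appeal to part~(a) directly because $\alpha$ and $\beta$ need not be coordinatewise close, only close up to a block-by-block global phase, and Lemma~\ref{L.A.5.5}\iref{L.A.5.5.new} is exactly the estimate tailored to absorb that phase.
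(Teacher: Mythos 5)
Your proof is correct. Part~(a) is handled the same way as in the paper (your displayed identity is the correct version; the one printed in the paper, $\Psi_\alpha(a)-\Psi_\beta(a)=(u_\alpha-u_\beta)a(u_\alpha^*-u_\beta^*)$, is a typo and does not hold), and the $(\Rightarrow)$ direction of (b) is essentially the same: a partial isometry built from the witnesses $i_n,j_n$ acts as the test operator, and the same computation shows non-compactness. The only divergence is in $(\Leftarrow)$: the paper absorbs the per-block phase by replacing $\beta$ with $\gamma(i)=\beta(i)\overline{\beta(m_n)}\alpha(m_n)$ (using Lemma~\ref{L.A.5.5}(2),(3)) so that $\Psi_\gamma=\Psi_\beta$ on $\calD[\vec E]$ and $\lim_i|\gamma(i)-\alpha(i)|=0$, then invokes part~(a); you instead pick the optimal $z_n$ via Lemma~\ref{L.A.5.5}\iref{L.A.5.5.new}, set $R_n=D_n-z_nI$, and bound the block norm directly by $(2+\|R_n\|)\|R_n\|\,\|a\|$. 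Both implement the same idea — the per-block scalar is what kills the obstruction to applying (a) naively — so this is substantively the paper's argument, just folded into a single norm estimate rather than routed through a modified unitary.
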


\begin{proof}
(a) Since $\lim_n |\alpha(n)-\beta(n)|=0$ implies $\pi(u_\alpha)=\pi(u_\beta)$, 
we have that 
$\Psi_\alpha(a)-\Psi_\beta(a)=(u_\alpha-u_\beta)a(u_\alpha^*-u_\beta^*)$ is compact. 

(b)  Assume $\limsup_n \Delta_{E_n}(\alpha,\beta)=0$. For each $n$ let
$m_n=\min (E_n)$ and define $\gamma\in \cUN$ by
$$
\gamma(i)=\beta(i)\overline{\beta(m_n)}\alpha(m_n), \quad \text{if $i\in E_n$.}
$$
The operator $\sum_{n\in \bbN}
\overline{\beta(m_n)}\alpha(m_n)\proj_{E_n}$ (with the obvious
interpretation of the infinite sum) is central in $\calD[\vec E]$ and
therefore for every $a\in \calD[\vec E]$ we have
$\Psi_\gamma(a)=\Psi_\beta(a)$. By clauses (2) and (3) of
Lemma~\ref{L.A.5.5} we have  $|\gamma(i)-\alpha(i)| \leq
\Delta_{E_n}(\alpha,\gamma)= \Delta_{E_n}(\alpha,\beta)$ for $i\in
E_n$. Therefore $\lim_i |\gamma(i)-\alpha(i)|=0$ and the conclusion
follows by (a).

Now assume $\limsup_n \Delta_{E_n}(\alpha,\beta)>0$. Fix $\e>0$, an
increasing sequence~$n(k)$ and $i(k)<j(k)$ in $E_{n(k)}$ such that
$\rho(i(k),j(k),\alpha,\beta)\geq \e$ for all~$k$. The partial
isometry  $a$ defined by $a(e_{i(k)})=e_{j(k)}$,
$a(e_{j(k)})=e_{i(k)}$, and $a(e_j)=0$ for other values of $j\in
E_{n(k)}$ belongs to $\calD[\vec E]$. 
Then
\begin{multline*}
\Psi_\alpha(a)(e_{i(k)})=(u_\alpha a u_\alpha^*)(e_{i(k)})=
u_\alpha(a(\overline{\alpha(i(k))} e_{i(k)})\\
=u_\alpha(\overline{\alpha(i(k))} e_{j(k)})
=\alpha(j(k))\overline{\alpha(i(k))} e_{j(k)}.
\end{multline*}
Similarly
$\Psi_\beta(a)(e_{i(k)})=\beta(j(k))\overline{\beta(i(k))}e_{j(k)}$
for all $k$. Therefore
\[
\|(\Psi_\alpha(a)-\Psi_\beta(a))(e_{i(k)})\|\geq
\rho(i(k),j(k),\alpha,\beta)\geq \e
\]
 for all $k$,
and the difference $\Psi_\alpha(a)-\Psi_\beta(a)$ is not compact.
\end{proof}

\begin{proof}[Proof of Theorem~\ref{T.1}]
Enumerate $\cUN$ as $\beta^\xi$ for $\xi<\omega_1$ and all partitions
of~$\bbN$ into finite intervals as $\vec F^\xi$, with $\xi<\omega_1$.
 Construct a
$\leq^*$-increasing cofinal chain $\vec E^\xi$ of
partitions and $\alpha^\xi\in \cUN$ such that for all
$\xi<\eta$ we have
\begin{enumerate}
\item \lbl{I.1}$\limsup_n\Delta_{E^\xi_n\cup
E^\xi_{n+1}}(\alpha^\xi,\alpha^\eta)=0$.
\item \lbl{I.2}$\limsup_n
\Delta_{E^{\xi+1}_n}(\alpha^{\xi+1},\beta^\xi)\geq \sqrt
2$. 
\item   $E^\eta$
is eventually coarser than $E^\xi$ in the sense that $E^\eta_m$ 
is equal to a union of intervals from $E^\xi$ for all but finitely many $m$. 
\pushcounter
\end{enumerate}
In order to describe the recursive construction, we
consider two cases.

First, assume $\zeta<\omega_1$ and $\vec E^\xi$ and
$\alpha^\xi$ were chosen for all $\xi\leq \zeta$. Let $\vec
E^{\zeta+1}$ be such that $F_n=E^{\zeta+1}_n$ is the union
of $2n+1$ consecutive intervals of $\vec E^\zeta$, denoted
by  $F^n_0,\dots, F^n_{2n}$.  Fix $n$. If $\Delta_{\vec
E^{\zeta+1}_n}(\alpha^\zeta,\beta^\zeta)\geq \sqrt 2$ let
$\alpha^{\zeta+1}$ coincide with $\alpha^\zeta$ on
$E^{\zeta+1}_n$. Now assume
 $\Delta_{E^{\zeta+1}_n}(\alpha^\zeta,\beta^\zeta)<\sqrt 2$.
 Let $\gamma_n=\exp(i\pi/n)$. Let
$\alpha^{\zeta+1}(j)= \gamma_n^k\alpha^\zeta(j)$ for  $j\in
F^n_k$. If $i\in F^n_0$ and $j\in F^n_n$ then
$\alpha^{\zeta+1}(i)=\alpha^\zeta(i)$ and
$\alpha^{\zeta+1}(j)=-\alpha^\zeta(j)$.  Since
$|\alpha^\zeta(j)\overline{\alpha^\zeta(i)}-
\beta^\zeta(j)\overline{\beta^\zeta(i)}|< \sqrt 2$, we have
$\Delta_{E^{\zeta+1}_n}(\alpha^{\zeta+1},\beta^\zeta)
\geq|\alpha^\zeta(j)\overline{\alpha^\zeta(i)}+\beta^\zeta(j)\overline{\beta^\zeta(i)}|>\sqrt
2$.

Hence \eqref{I.2} holds. We need to check $\limsup_m
\Delta_{E^\zeta_m\cup
E^\zeta_{m+1}}(\alpha^\zeta,\alpha^{\zeta+1})=0$. We have
$\Delta_{E^\zeta_m}(\alpha^\zeta,\alpha^{\zeta+1})=0$ for
all $m$. Since
 $\alpha^{\zeta+1}$ and
$\alpha^\zeta$ coincide on $F^n_0$ and on $F^n_{2n}$ for
each $n$, $\Delta_{F^n_{2n}\cup F^{n+1}_0}
(\alpha^\zeta,\alpha^{\zeta+1})=0$ for all $n$. If $0\leq
k<2n$ then  $\Delta_{F^n_k\cup
F^n_{k+1}}(\alpha^\zeta,\alpha^{\zeta+1})\leq
|\gamma_n|\leq |\sin(\pi/n)|\leq \pi/n$. Hence clause
\eqref{I.1} is satisfied with $\xi=\zeta$ and
$\eta=\zeta+1$, and therefore it holds for all $\xi$ and
$\eta=\zeta+1$ by transitivity.

Now assume $\zeta<\omega_1$ is a limit ordinal such that $\vec E^\xi$
and $\alpha^\xi$ have been defined for $\xi<\zeta$. Let $\xi_n$, for
$n\in \bbN$,  be an increasing sequence with supremum $\zeta$ and
write $\vec E^n,\alpha^n$ for $\vec E^{\xi_n},\alpha^{\xi_n}$. Find a
strictly increasing function $f\colon \bbN\to \bbN$ such that
\begin{enumerate}
\popcounter
\item \lbl{J.1}$f(0)=0$,
\item \lbl{J.2b}
$f(n+1)$ is large enough so that each $E^{n+1}_i$ disjoint from $[0,f(n+1))$ is 
the union of finitely many intervals of $E^n$.  
\item  \lbl{J.2}For all  $k\leq n$ the interval 
$F_n=[f(n),f(n+1))$ is the union of finitely many intervals from $E^k$,
\item \lbl{J.3} If $l<k\leq n$, $j\in \bbN$,
and $\Delta_{E^l_j\cup E^l_{j+1}}(\alpha^l,\alpha^k)\geq 1/n$, then $\max E^l_j\leq f(n)$.
\item \lbl{J.4}  $F^\zeta_i\not\supseteq F_n$ for all $i$ and
all $n$.
\pushcounter
\end{enumerate}
The values $f(n)$ for $n\in \bbN$ are chosen recursively. If $f(n)$
has been chosen then each of the clauses \eqref{J.2b}, \eqref{J.2}, \eqref{J.3} and
\eqref{J.4} can be satisfied by choosing $f(n+1)$ to be larger than
the maximum of a finite subset of $\bbN$.

Assume $f$ has been chosen to satisfy \eqref{J.1}--\eqref{J.4}. By
\eqref{J.2} for $m$ and $i\geq m$ we have $E^m_i\cup
E^m_{i+1}\subseteq F_n\cup F_{n+1}$ if $n$ is the maximal such that
$f(n)<\min E^m_i$.
 Therefore with  $\vec E^\zeta=\vec F$ we have $\vec
E^n\leq^* \vec E^\zeta$ for all $n$, and therefore $\vec E^\xi\leq^*
\vec E^\zeta$ for all $\xi<\zeta$. By \eqref{J.4} we have that for
each $i\in\bbN$ the interval
 $F^\zeta_i$ intersects at most two of the intervals $F_n$ nontrivially and therefore
  $\vec F^\xi\leq^* \vec E^\xi$.

Recursively define $\gamma_n\in {\bbT}$ for $n\in \bbN$ and $\alpha^\zeta(j)$ for $j\in \bbN$
so that for all~$n$ we have 
\begin{enumerate}
\popcounter
\item\label{J.alpha} $\alpha^\zeta(j)=\gamma_n\alpha^n(j)$ for $j\in   F_n\cup \{f(n+1)\}$.
\end{enumerate}
To this end, let 
\begin{align*} 
\alpha^\zeta(j)&=\alpha^0(j)&\text{ for }j\in F_0\cup \{f(1)\}, \\
&\gamma_1=\alpha^0(f(1))\overline{\alpha^1(f(1))}\\
\alpha^\zeta(j)&=\gamma_1\alpha^1(j)&\text{ for }j\in F_1\cup \{f(2)\}, 
\end{align*}
and in general for $n\geq 0$ let  
\[
\gamma_{n+1}=\alpha_n(f(n+1))\gamma_n \overline{\alpha^{n+1}(f(n+1))}
\]
let 
\[
\alpha^\zeta(j)=\gamma_{n+1}\alpha^{n+1}(j)
\qquad\text{  for $j\in F_{n+1}\cup \{f(n+2)\}$}. 
\]
This sequence satisfies \eqref{J.alpha}. 

Fix $m$ and write $I_n$ for $E^m_n\cup E^m_{n+1}$. We want to show  
$
\lim_{n\to \infty} \Delta_{I_n}(\alpha^\zeta,\alpha^m)=0$. 

By 
\eqref{J.2}, 
for all $n\geq m$ we have $E^m_n\subseteq F_k$,  for some $k=k(n)$ and  
   therefore 
   $I_n\subseteq F_k\cup F_{k+1}$. This implies, 
using    Lemma~\ref{L.A.5.5}(4),(3), that 
   \[
\Delta_{I_n}(\alpha^\zeta,\alpha^m)\leq \Delta_{E^m_n\cup \{f(k+1)\}}(\alpha^k,\alpha^m)
+\Delta_{E^m_{n+1}}(\alpha^{k+1},\alpha^m)
\]
and by \eqref{J.3}
 the right-hand side is $\leq \frac 1k+\frac 1{k+1}$, 
since $n\geq m$. 
Moreover  $\lim_{n\to \infty} k(n)=\infty$, and we can conclude that 
$\lim_{n\to \infty} \Delta_{I_n}(\alpha^\zeta,\alpha^m)=0$. 
Therefore the conditions of Lemma~\ref{L.A.4} (b) are satisfied and
$\alpha^\zeta$ satisfies~\eqref{I.1}.

This finishes the description of the construction of $\vec E^\xi$ and
$\alpha^\xi$ satisfying \eqref{I.1} and \eqref{I.2}. By
Lemma~\ref{L.1} there is an automorphism $\Phi$ of $\cC(H)$ that 
has~$\Psi_\xi$ as its representation on $\cF[\vec E^\xi]$ for each $\xi$.
Assume this automorphism is inner. Then it has a representation of
the form $\Psi_u$ for some partial isometry $u$.  By
Lemma~\ref{L.A.3} applied to $\Psi_0$ and $\Psi_u$
there is $\beta\in \cUN$ such that $\Psi_\beta$ is
a representation of $\Phi$.  But $\beta$ is equal to $\beta^\xi$ for
some $\xi<\omega_1$, and by \eqref{I.2} and Lemma~\ref{L.A.4} (b) the
mapping $\Psi_\beta$ is not a representation of $\Phi$ on $\cF[\vec
E^\zeta]$.

By construction, the constructed automorphism is inner on the standard atomic
masa, and actually on each $\calD[\vec E]$. In addition, Lemma~\ref{L.Arv}
shows that for every countable subset of $\cC(H)$ there is an inner
automorphism of~$\cC(H)$ that sends $a$ to~$\Phi(a)$.
\end{proof}

In the proof of Theorem~\ref{T.1} CH was used only in the first line to find
enumerations $(\vec F^\xi)$ and $(\beta_\xi)$,  $\xi<\omega_1$. The first
enumeration was used to find a $\leq^*$-cofinal $\omega_1$-sequence of
partitions $\vec E^\xi$ and the second to assure that $\Phi\neq
\Psi_{\beta_\xi}$ for all $\xi$. A weakening of CH known as $\fd=\aleph_1$ (see
e.g., \cite{BarJu:Book}) suffices for the first task. Stefan Geschke pointed
out that the proof of Theorem~\ref{T.1} easily gives $2^{\aleph_1}$
automorphisms and therefore that the existence of the second enumeration may be
replaced with another cardinal inequality, $2^{\aleph_0}<2^{\aleph_1}$
(so-called  \emph{weak Continuum Hypothesis}). Therefore the assumptions
$\fd=\aleph_1$ and $2^{\aleph_0}<2^{\aleph_1}$ together imply the existence of
an outer automorphism of the Calkin algebra. It not known whether these
assumptions imply the existence of a nontrivial automorphism of
$\cP(\bbN)/\Fin$.

\section{The toolbox}
\lbl{S.toolbox}
\subsection{Descriptive set theory} The standard reference is
\cite{Ke:Classical}. A topological space is \emph{Polish} if it is separable
and completely metrizable.
 We consider $\cB(H)$ with the strong
operator topology. For every $M<\infty$ the strong operator topology on
$(\cB(H))_{\leq M}=\{a\in \cB(H)\mid \|a\|\leq M\}$ is Polish.
 Throughout `Borel' refers to the
Borel structure on $\cB(H)$ induced by the strong operator topology.

Fix  a Polish space $X$. A subset of $X$  is \emph{meager} (or, it is  of \emph{first category}) 
if it can be covered
by countably many closed nowhere dense sets.  A subset of $X$ has the
\emph{Property of Baire} (or, is \emph{Baire-measurable}) if its symmetric
difference with some open set is meager.
 A subset of  $X$ is
\emph{analytic} if it is a continuous image of a Borel subset of a Polish
space. Analytic sets (as well as their complements, \emph{coanalytic sets}),
share the classical regularity properties of Borel sets such as the Property of
Baire and measurability with respect to Borel measures. A function $f$ between
Polish spaces is \emph{C-measurable} if it is measurable with respect to the
smallest $\sigma$-algebra generated by analytic sets. C-measurable functions
are Baire-measurable (and therefore continuous on a dense $G_\delta$ subset of
the domain) and, if the domain is also a locally compact topological group,
Haar-measurable. The following uniformization theorem will be used a large
number of times in the proof of Theorem~\ref{T.0};  for its proof see e.g.
\cite[Theorem~18.1]{Ke:Classical}.

\begin{thm}[Jankov, von Neumann] \lbl{T.JvN}
If $X$ and $Y$ are Polish spaces and $A\subseteq X\times Y$
is analytic, then there is a C-measurable function $f\colon
X\to Y$ such that for all $x\in X$, if $(x,y)\in A$ for
some $y$ then $(x,f(x))\in A$. \qed
\end{thm}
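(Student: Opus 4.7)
The plan is to prove this classical selection theorem by reducing to the case of the Baire space $Y=\bbN^{\bbN}$ and then extracting a ``leftmost branch'' from the tree canonically attached to each fiber. Since every Polish space is a continuous image of $\bbN^{\bbN}$, I would fix a continuous surjection $g\colon \bbN^{\bbN}\to Y$ and pull $A$ back to $B=(\mathrm{id}_X\times g)^{-1}(A)\subseteq X\times \bbN^{\bbN}$. The set $B$ is analytic because analytic sets are preserved by continuous preimages, and any C-measurable selector $\tilde f$ for $B$ yields the required selector $g\circ \tilde f$ for $A$.

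Assuming $Y=\bbN^{\bbN}$, ``$A$ is analytic'' means $A$ is a continuous image of $\bbN^{\bbN}$, so $A$ is the projection to the first two coordinates of a closed set $F\subseteq X\times \bbN^{\bbN}\times \bbN^{\bbN}$ (take the graph of the defining map). After identifying $\bbN^{\bbN}\times \bbN^{\bbN}$ with $\bbN^{\bbN}$ by interleaving, each fiber $F_x=\{w:(x,w)\in F\}$ is a closed subset of $\bbN^{\bbN}$, and $T_x=\{s\in \bbN^{<\bbN}: [s]\cap F_x\neq \emptyset\}$ is a pruned tree whose infinite branches are exactly $F_x$. Whenever $A_x\neq\emptyset$, define $f(x)$ to be the leftmost branch of $T_x$, specified recursively by
\[
f(x)(n)=\min\bigl\{k\in\bbN : (f(x)\rs n)\conc \<k\>\in T_x\bigr\},
\]
with the minimum existing at each stage by pruning; decoding the interleaving and projecting to the first coordinate then produces an element of $A_x$. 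On $\{x:A_x=\emptyset\}$ I would set $f$ to any constant value.

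For C-measurability, observe that each basic set $U_s=\{x:s\in T_x\}$ is the projection on $X$ of the closed set $F\cap (X\times [s])$ and is therefore analytic. By induction on $n$, each level set $\{x:f(x)\rs n=s\}$ lies in the $\sigma$-algebra generated by analytic sets, since
\[
\{x:f(x)(n)=k\}=\bigcup_{s\in \bbN^n}\Bigl(\{x:f(x)\rs n=s\}\cap \bigcap_{j<k}(X\setminus U_{s\conc\<j\>})\cap U_{s\conc\<k\>}\Bigr)
\]
is a countable Boolean combination of analytic sets and previously-constructed members of the $\sigma$-algebra. The main obstacle I anticipate is keeping the recursive ``leftmost branch'' definition within this $\sigma$-algebra rather than escaping to higher projective levels; the key trick that controls the complexity is to move from $A_x$ to $T_x$, so that each query becomes a projection of a \emph{closed} set, together with the fact that each coordinate $f(x)(n)$ is decided by only finitely many such queries once the prefix is fixed.
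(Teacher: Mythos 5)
The paper does not prove this theorem: it states it with a terminal $\qed$ and cites \cite[Theorem~18.1]{Ke:Classical}, so there is no ``paper's own proof'' to compare against. Your argument is the standard textbook proof of the Jankov--von Neumann selection theorem, and it is correct. The reduction to $Y=\bbN^{\bbN}$ via a continuous surjection works because surjectivity guarantees the pulled-back fibers $B_x$ are nonempty exactly when $A_x$ is, and composing a C-measurable map with a continuous one stays C-measurable; the tree representation, the pruning argument giving a well-defined leftmost branch on $\{x:A_x\neq\emptyset\}$, and the inductive computation placing each $\{x:f(x)\rs n=s\}$ in the $\sigma$-algebra generated by the analytic sets $U_s$ and their complements are all sound. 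The only step worth making fully explicit is that the closed $F\subseteq X\times\bbN^{\bbN}\times\bbN^{\bbN}$ with $A=\proj(F)$ exists because an analytic set is a continuous image of $\bbN^{\bbN}$ \emph{itself}, not merely of some Polish space --- which is exactly the same fact (``every Polish space is a continuous image of $\bbN^{\bbN}$'') you already invoke to produce $g$, applied once more to the defining map of $A$.
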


 A function $f$ as above \emph{uniformizes}
$A$. In general it is impossible to uniformize a Borel set
by a Borel-measurable function, but the following two 
 special cases of
\cite[Theorem~8.6]{Ke:Classical} (applied with ${\mathcal I}_x$
being the meager ideal or the null ideal, respectively,   for each $x\in X$)
will suffice for our purposes.

\begin{thm}\lbl{T.U.Large}
Assume $X$  and $Y$ are Polish spaces,  $A\subseteq X\times Y$ is Borel and for
each $x\in X$ the vertical section $A_x=\{y\mid (x,y)\in A\}$ is either empty
or nonmeager. Then $A$ can be uniformized by a Borel-measurable function. \qed
\end{thm}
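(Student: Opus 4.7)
The plan is to apply the classical \emph{category quantifier} technique. For a Borel set $B\subseteq X\times Y$, write $\exists^*_y B=\{x\in X : B_x\text{ is nonmeager}\}$. The main preliminary ingredient is the Borel form of the Kuratowski--Ulam theorem: if $B$ is Borel then $\exists^*_y B$ is Borel. This is proved by transfinite induction on the Borel rank; countable unions are handled trivially, and complements via the dual quantifier $\forall^*_y=\neg\exists^*_y\neg$, together with the observation that a Borel set in $Y$ is nonmeager iff it is nonmeager in some element of a fixed countable basis.

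Fix a complete compatible metric $d$ on $Y$ and a countable basis $(V_n)_{n\in\bbN}$ for its topology. Recursively construct Borel-measurable functions $n_k\colon\{x : A_x\neq\emptyset\}\to\bbN$ so that, whenever $A_x$ is nonmeager, the sets $V_{n_0(x)}\supseteq\overline{V_{n_1(x)}}\supseteq V_{n_1(x)}\supseteq\cdots$ have diameters tending to $0$ and $A_x$ is comeager in each $V_{n_k(x)}$. At the recursive step, the Property of Baire of $A_x$ guarantees that $A_x$ is comeager in some nonempty open subset of $V_{n_k(x)}$; inside such an open subset there are basic open sets of arbitrarily small diameter, so letting $n_{k+1}(x)$ be the least index with the required containment, diameter, and comeagerness properties yields a Borel function by the category quantifier lemma applied to the Borel subset of $X\times\bbN$ encoding those constraints.

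The intersection $\bigcap_k\overline{V_{n_k(x)}}$ is a singleton $\{g(x)\}$ by completeness of $d$, and $g$ is Borel since each $n_k$ is. To guarantee $g(x)\in A_x$ rather than merely in its closure, one refines the construction by interleaving it with an enumeration of open sets whose intersection equals a specific $G_\delta$-in-$Y$ subset of $A_x$: starting from a Borel refinement $A^*\subseteq A$ whose vertical sections are $G_\delta$ and remain nonmeager, and fixing open sets $(H_j^x)_j$ with $\bigcap_j H_j^x=A^*_x$ enumerated in a Borel way, one additionally requires $\overline{V_{n_k(x)}}\subseteq H_k^x$ at the $k$-th stage; this is possible because $A^*_x\cap H_k^x=A^*_x$ is still comeager in the relevant open set. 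Then $g(x)\in\bigcap_k H_k^x=A^*_x\subseteq A_x$. The main technical obstacle will be producing the Borel refinement $A^*$ with Borel-indexed $G_\delta$ sections of the same nonmeagerness, since sections of arbitrary Borel sets need not be $G_\delta$; this is extracted using standard Polish space machinery (representation of Borel sets as continuous injective images of closed subsets of $\bbN^\bbN$, together with Lusin separation-type arguments) and is the step where the proof genuinely rests on the depth of descriptive set theory rather than mere bookkeeping.
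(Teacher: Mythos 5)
The paper does not prove this theorem: Theorem~\ref{T.U.Large} is stated with a \qed\ and attributed to \cite[Theorem~8.6]{Ke:Classical} (the large-section uniformization theorem), applied with the meager $\sigma$-ideal, so there is no proof in the paper to compare against. Your outline is conceptually the right one --- the Montgomery--Novikov category-quantifier theorem does make the shrinking-basic-opens scheme Borel, and you correctly identify that the limit of the shrinking balls need only lie in the closure of $A_x$, not in $A_x$ itself, so some Borel-uniform mechanism is needed to aim inside $A_x$.

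The gap is precisely at the step you flag as the hard one. You propose to fix it by producing a Borel $A^*\subseteq A$ whose sections are nonmeager $G_\delta$ sets, claiming this follows from the Luzin--Suslin representation of Borel sets as injective continuous images of closed subsets of $\bbN^\bbN$ plus Lusin separation. As stated this is not a proof, and I do not see how those tools give a Borel-coded $G_\delta$ refinement with nonmeager sections: given a closed $F\subseteq X\times Y\times\bbN^\bbN$ projecting bijectively onto $A$, neither restricting the $\bbN^\bbN$-fiber to a compact set nor a separation argument yields a nonmeager $G_\delta$ piece of $A_x$ (a section $A_x$ can be a comeager $G_\delta$ with empty interior, in which case every compact piece of $F_x$ projects to a nowhere dense set). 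What actually works is to bypass the $G_\delta$ refinement and run the shrinking construction in the unfolded space: for $s\in\bbN^{<\bbN}$ the set $A^x_s:=\proj_Y[F_x\cap(Y\times N_s)]$, where $N_s=\{z\in\bbN^\bbN: s\sqsubseteq z\}$, is Borel uniformly in $(x,s)$ because the relevant projection of a closed set is injective, so Luzin--Suslin applies; one then builds $\emptyset=s_0\sqsubset s_1\sqsubset\cdots$ with $|s_k|\geq k$ and basic opens $V_0\supseteq\overline{V_1}\supseteq V_1\supseteq\cdots$ of shrinking diameter so that $A^x_{s_k}$ is comeager in $V_k$ at every stage, using $A^x_s=\bigcup_m A^x_{s\conc m}$ and countable additivity of the meager ideal, with every choice made Borel via the category quantifier. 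The unique point $(y,z)$ of $\bigcap_k(\overline{V_k}\times N_{s_k})$ lies in $F_x$ since $F_x$ is closed, hence $y\in A_x$, and $x\mapsto y$ is the desired Borel uniformization. So your proposal reaches the right obstacle but does not overcome it: the claimed $G_\delta$-sectioned refinement of $A$ is essentially as strong as the theorem itself and needs to be replaced by the unfolding argument.
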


\begin{thm}\lbl{T.U.Large.Measure}
Assume $X$  and $Y$ are Polish spaces,  $Y$ carries a  Borel probability 
measure,  $A\subseteq X\times Y$ is Borel and for
each $x\in X$ the vertical section $A_x=\{y\mid (x,y)\in A\}$ is either empty
or has a positive measure. Then $A$ can be uniformized by a Borel-measurable function. \qed
\end{thm}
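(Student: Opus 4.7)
This is a standard large-section uniformization theorem from descriptive set theory (essentially Kechris, \emph{Classical Descriptive Set Theory}, Theorem~8.6 specialized to the $\sigma$-ideal of $\mu$-null sets), and my plan is to follow that proof in three stages: measurability of the sectional measure, reduction to compact sections, and classical selection.

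First I would show, by a monotone-class argument, that for every Borel $B \subseteq X \times Y$ the function $x \mapsto \mu(B_x)$ is Borel-measurable. The assertion is clear for Borel rectangles via Fubini, and the class of $B$ for which it holds is closed under complementation (since $\mu$ is a probability measure, $\mu((X \times Y \setminus B)_x) = 1 - \mu(B_x)$) and under countable disjoint unions, hence coincides with the full Borel $\sigma$-algebra. Consequently $P := \{x : A_x \neq \emptyset\} = \{x : \mu(A_x) > 0\}$ is Borel.

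Next, I would reduce to the case of compact vertical sections. Let $\bK(Y)$ denote the Polish space of compact subsets of $Y$ in the Vietoris topology; the map $K \mapsto \mu(K)$ on $\bK(Y)$ is upper semicontinuous, hence Borel. By inner regularity of $\mu$ on the Polish space $Y$, for each $x \in P$ the set of compact $K \subseteq A_x$ with $\mu(K) \geq \mu(A_x)/2$ is nonempty. The key technical step is to Borel-parametrize such a choice: using the Borel-on-Borel property from the first stage, one produces a Borel map $K \colon P \to \bK(Y) \setminus \{\emptyset\}$ with $K(x) \subseteq A_x$ for all $x$. Finally, the Kuratowski--Ryll-Nardzewski selection theorem applied to the closed-valued Borel multifunction $x \mapsto K(x)$ yields a Borel $f \colon P \to Y$ with $f(x) \in K(x) \subseteq A_x$, and extending $f$ arbitrarily on $X \setminus P$ completes the construction.

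The main obstacle is the Borel selection $K$ in the middle stage. The condition ``$K \subseteq A_x$'' is in general only coanalytic in $(x,K)$ (its negation is a projection of a Borel set), so one cannot directly invoke Jankov--von~Neumann. The positive-measure hypothesis is what makes this step go through: it lets one approximate $A$ by a Borel subset with compact vertical sections of controlled positive measure in a Borel-uniform way, thereby skirting the naive shrinking construction (which in my counterexample $A_x = (0,1)$, $\mu$ Lebesgue, can legitimately produce $f(x) = 0 \in \overline{A_x} \setminus A_x$). Once compact sections are secured, closedness of $A_x$ forces the limit of any shrinking sequence actually to lie in $A_x$, and classical selection takes over.
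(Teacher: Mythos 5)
The paper offers no proof of this statement at all: it is quoted, with $\mathcal I_x$ taken to be the ideal of $\mu$-null sets, from the large-section uniformization theorem of \cite[Theorem~8.6]{Ke:Classical}, and your opening sentence identifies exactly that source. Your first stage is also correct and is precisely the hypothesis one must verify in order to invoke that theorem: the Dynkin-system argument shows that $x\mapsto\mu(B_x)$ is Borel for every Borel $B\subseteq X\times Y$, i.e.\ that the null ideal is ``Borel-on-Borel,'' and hence that $P=\{x\mid A_x\neq\emptyset\}$ is Borel. Had you stopped there and cited the theorem, you would be doing exactly what the paper does.

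As a self-contained argument, however, your sketch has a genuine gap at the step you yourself flag as the main obstacle: the existence of a \emph{Borel} map $x\mapsto K(x)\in\bK(Y)$ with $K(x)\subseteq A_x$ compact and $\mu(K(x))\geq\mu(A_x)/2$. Pointwise inner regularity gives such a $K$ for each fixed $x$, but making the choice Borel in $x$ is the entire difficulty, and nothing in Stage 1 or in the positive-measure hypothesis supplies it: as you note, $\{(x,K)\mid K\subseteq A_x\}$ is only coanalytic, so Jankov--von Neumann is unavailable, while the relaxed relation $\mu(K\setminus A_x)=0$ (which Stage 1 \emph{does} make Borel, by applying it to the Borel set $\{(x,K,y)\mid y\in K,\ (x,y)\notin A\}$) only produces a $K$ almost contained in $A_x$, so the final selected point need not lie in $A_x$; your claim that the measure hypothesis ``lets one approximate $A$ by a Borel subset with compact vertical sections in a Borel-uniform way'' is asserted, not proved, and is essentially equivalent in strength to the theorem itself. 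The usual proof behind the citation avoids the hyperspace altogether: one represents $A$ fiberwise as the image of a Borel set $\tilde A\subseteq X\times\mathbb{N}^{\mathbb{N}}$ with \emph{closed} sections under a fiber-preserving Borel injection (a Luzin/Suslin-scheme representation of Borel sets), transfers the measure along the fibers, and then recursively chooses, Borel in $x$ by the Borel-on-Borel property, basic clopen sets $N_s$ of shrinking diameter whose trace on $\tilde A_x$ retains positive measure; completeness and closedness of $\tilde A_x$ make the branch converge to a point of $\tilde A_x$, which maps to a point of $A_x$. If you wish to keep your compact-sections-plus-Kuratowski--Ryll-Nardzewski architecture, you must first prove the uniform inner approximation by some such structural representation (or by induction on Borel rank); as written, the central step is missing.
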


A topological group is \emph{Polish} if has a compatible complete separable
metric. The unitary group of $\cB(H)$, for a separable $H$, is a Polish group
with respect to the strong operator topology (e.g.,
\cite[9.B(6)]{Ke:Classical}). Also,
 the unitary group of every separably acting von Neumann algebra $\calD$ is
Polish with respect to strong operator topology. A complete separable metric on
$\cU(\calD)$ is given by $d'(a,b)=d(a,b)+d(a^*,b^*)$, where $d$ is the usual
complete metric on $\calD_{\leq 1}$ compatible with the strong operator
topology.  The following is Pettis's theorem (e.g.,
\cite[Theorem~9.10]{Ke:Classical}).

\begin{thm}\lbl{T.Pettis} Every Baire-measurable
homomorphism from a Polish group into a second-countable
group is continuous. \qed
\end{thm}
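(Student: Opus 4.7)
The plan is to prove continuity at the identity $e_G$, from which continuity everywhere follows by the homomorphism property $\varphi(g)=\varphi(g_0)\varphi(g_0^{-1}g)$. Fix an open neighborhood $V$ of $e_H$ and choose an open symmetric neighborhood $W$ of $e_H$ with $W^{-1}W\subseteq V$; this is possible by joint continuity of the group operations in $H$. We aim to find a neighborhood $U$ of $e_G$ with $\varphi(U)\subseteq V$.

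Since $H$ is second-countable, it is Lindel\"of, so the open cover $\{hW:h\in H\}$ has a countable subcover $H=\bigcup_n h_nW$. Setting $A_n=\varphi^{-1}(h_nW)$, we get $G=\bigcup_n A_n$. Each $A_n$ has the Property of Baire, because $h_nW$ is Borel and $\varphi$ is Baire-measurable. Since $G$ is Polish (hence a Baire space), at least one $A_n$ is non-meager; fix such an $n$ and write $A=A_n$.

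The key step is the following form of Pettis's lemma: if $A\subseteq G$ has the Property of Baire and is non-meager, then $A^{-1}A$ contains a neighborhood of $e_G$. Since $A$ has BP, there is an open set $O\subseteq G$ such that $A\triangle O$ is meager, and $A$ non-meager forces $O\ne\emptyset$. Pick a non-empty basic open $O_0\subseteq O$; then $O_0\setminus A$ is meager. Using left-continuity of multiplication, choose an open symmetric neighborhood $U$ of $e_G$ such that $gO_0\cap O_0\ne\emptyset$ for every $g\in U$. For such $g$, both $A$ and $gA$ are comeager in the non-empty open set $gO_0\cap O_0$: indeed $(gO_0\cap O_0)\setminus A\subseteq O_0\setminus A$ is meager, and $(gO_0\cap O_0)\setminus gA\subseteq g(O_0\setminus A)$ is meager (meagerness is preserved by homeomorphisms, in particular by left translation). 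Hence $A\cap gA\ne\emptyset$ in this Baire open set, so $g\in AA^{-1}$. Applying this argument to $A^{-1}$ (which also has BP and is non-meager), we obtain a neighborhood of $e_G$ contained in $A^{-1}A$.

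Finally, for $g\in A^{-1}A$ write $g=a^{-1}b$ with $a,b\in A$, so that $\varphi(g)=\varphi(a)^{-1}\varphi(b)\in(h_nW)^{-1}(h_nW)=W^{-1}W\subseteq V$. Thus a neighborhood of $e_G$ maps into $V$, establishing continuity at $e_G$ and therefore everywhere. The main obstacle is the verification of Pettis's lemma, and in particular the translation-invariance of meagerness that allows the comeager-in-$gO_0\cap O_0$ argument; no set-theoretic axioms beyond ZFC are needed, only the Baire Category Theorem applied to the Polish group $G$.
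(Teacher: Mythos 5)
The paper does not give a proof of this theorem; it simply cites \cite[Theorem~9.10]{Ke:Classical} and ends with \qed. Your argument is the standard proof of Pettis's theorem (and is essentially the one found in that reference): reduce to continuity at the identity, use second-countability (Lindel\"of) to cover $H$ by countably many translates $h_nW$ of a symmetric neighborhood with $W^{-1}W\subseteq V$, invoke the Baire Category Theorem to find a non-meager $A=\varphi^{-1}(h_nW)$ with the Property of Baire, and then apply Pettis's lemma that $A^{-1}A$ contains a neighborhood of $e_G$. Your verification of Pettis's lemma via the ``comeager in $gO_0\cap O_0$'' argument is correct: the two translates are comeager in a nonempty open (hence Baire) subset of $G$, so they must meet, and applying the lemma to $A^{-1}$ (rather than $A$) correctly yields the needed form $A^{-1}A$. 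The proof is complete and matches the cited source's approach.
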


We end this subsection with a simple computation.

\begin{lemma} \lbl{L.T.3}
Consider $\cB(H)$ with the strong operator topology. Fix $M<\infty$.
\begin{enumerate}
\item [(a)] The set  of compact operators of
norm $\leq M$ is a Borel subset of $\cB(H)_{\leq M}$.
\item [(b)] For $\e\geq 0$ the set of operators $a$ of norm $\leq M$
such that $\|a\|_{\cK}\leq \e$ is a Borel subset of $\cB(H)_{\leq M}$.
\end{enumerate}
\end{lemma}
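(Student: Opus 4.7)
\emph{Plan.} Claim (a) is the special case $\e=0$ of (b): since $\|a\|_{\cK}=\|\pi(a)\|$, an operator is compact iff $\|a\|_{\cK}=0$. So it suffices to prove (b).

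The starting observation is that the sequence $(\|a\|_n)_n$ is nonincreasing in $n$. Indeed, since $\bfR_n\leq \bfR_{n+1}$ we have $(I-\bfR_{n+1})(I-\bfR_n)=I-\bfR_{n+1}$, so
\[
\|a\|_{n+1}=\|(I-\bfR_{n+1})(I-\bfR_n)a\|\leq \|(I-\bfR_n)a\|=\|a\|_n,
\]
and hence $\|a\|_{\cK}=\inf_n\|a\|_n$.

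The main step is to show that for each fixed $n$ the function $a\mapsto \|a\|_n$ is lower semi-continuous on $\cB(H)_{\leq M}$ with the strong operator topology. Fix a countable dense subset $D$ of the unit ball of $H$. For each $v\in D$, the map $a\mapsto (I-\bfR_n)av$, viewed as a map from $\cB(H)_{\leq M}$ with SOT into $H$ with the norm topology, is continuous: $a\mapsto av$ is SOT-to-norm continuous by the very definition of SOT, and left multiplication by the fixed operator $I-\bfR_n$ is norm-continuous. Therefore $a\mapsto \|(I-\bfR_n)av\|$ is SOT-continuous, and the boundedness assumption together with density of $D$ in the unit ball of $H$ yields
\[
\|a\|_n=\|(I-\bfR_n)a\|=\sup_{v\in D}\|(I-\bfR_n)av\|,
\]
a supremum of countably many continuous functions. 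In particular each sublevel set $\{a\in\cB(H)_{\leq M}:\|a\|_n\leq r\}$ is closed.

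Combining the two points gives
\[
\{a\in\cB(H)_{\leq M}:\|a\|_{\cK}\leq \e\}=\bigcap_{k=1}^{\infty}\bigcup_{n=1}^{\infty}\{a\in\cB(H)_{\leq M}:\|a\|_n\leq \e+1/k\},
\]
which is an $F_{\sigma\delta}$-set and hence Borel; this proves (b), and (a) follows. I do not anticipate any real obstacle: the only small point to verify is that the supremum over the countable dense set $D$ genuinely equals the full operator norm $\|(I-\bfR_n)a\|$, which holds by norm-continuity of $v\mapsto \|(I-\bfR_n)av\|$ combined with the uniform bound $\|a\|\leq M$.
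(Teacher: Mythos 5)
Your proof is correct and takes essentially the same approach as the paper: both write the relevant set as an $F_{\sigma\delta}$ using that the sublevel sets $\{a:\|(I-\bfR_n)a\|\leq x\}$ are SOT-closed. You additionally supply the routine verification of that closedness via a countable dense set, and deduce (a) from (b) rather than proving (a) first, but these are cosmetic differences.
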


\begin{proof} (a)  Recall that $\bfR_n$ is a fixed increasing sequence of
finite-rank projections such that $\bigvee_n \bfR_n=I$. For a projection $P$
the set $\{a\mid \|Pa\|\leq x\}$ is strongly closed for every $x\geq 0$, and
\[
\cK(H)=\{a\mid (\forall m)(\exists n) \|(I-\bfR_n)a\|<1/m\}.
\]
 Hence $\cK(H)\cap
\cB(H)_{\leq M}$ is a relatively $F_{\sigma\delta}$ subset of $\cB(H)_{\leq M}$
for each~$M$.

The proof of (b) is almost identical.
\end{proof}

\subsection{Set theory of the power-set of the natural numbers}
\lbl{S.P(N)}
  A metric~$d$ on  $\p(\bbN)$ is defined by
$d(A,B)=2^{-\min(A\Delta B)}$, where $A\Delta B$ is the symmetric difference of
$A$ and $B$. This turns $(\p(\bbN),\Delta)$ into a compact metric topological
group, and the natural identification of subsets of $\bbN$ with infinite
sequences of zeros and ones is a homeomorphism into the triadic Cantor set.

\subsection{Todorcevic's axiom}\lbl{S.OCA}  Let    $X$ be a separable metric
space and let
$$
[X]^2=\{\{x,y\}\vert x\neq y\text{ and } x,y\in X\}.
$$
Subsets of $[X]^2$  are naturally identified with the symmetric
subsets of $X\times X$ minus the diagonal. A \emph{coloring}
$[X]^2=K_0\cup K_1$ is \emph{open} if $K_0$, when identified with a
symmetric subset of $X\times X$, is open in the product topology. If
$K\subseteq [X]^2$ then a subset $Y$ of $X$ is \emph{$K$-homogeneous}
if $[Y]^2\subseteq K$. Since  $K_1=[X]^2\setminus K_0$ is closed, a
closure of a $K_1$-homogeneous set is always $K_1$-homogeneous. The
following axiom was  introduced by Todorcevic in \cite{To:Partition}
under the name of Open Coloring Axiom, OCA.

\begin{TAdef}
If $X$ is a separable metric space  and $[X]^2=K_0\cup K_1$
is an open coloring, then $X$ either has  an uncountable
$K_0$-homogeneous subset or it  can be covered by a
countable family of $K_1$-homogeneous sets.
\end{TAdef}

The instance of TA when $X$ is analytic  follows from the usual
axioms of mathematics (see e.g., \cite{Feng:OCA}). In this case the
uncountable $K_0$-homogeneous set can be chosen to be perfect, hence
this variant of TA is a generalization of the classical perfect-set
property for analytic sets (\cite{Ke:Classical}).

 Note that $K_1$ is not required
to be open. We should say a word to clarify our use of the phrase
`open coloring.' In order to be able to apply TA to some coloring
$[X]^2=K_0\cup K_1$ it suffices to know that there is a separable
metric topology $\tau$ on $X$ which makes $K_0$ open. For example,
for  $X\subseteq \p(\bbN)$ and for each $x\in X$ we fix  an $f_x\in
\bbN^\bbN$ consider the coloring $[X]^2=K_0\cup K_1$ defined by
\begin{enumerate}
\item [ ]  $\{x,y\}\in K_0$ if and only if $f_x(n)\neq f_y(n)$ for some
$n\in x\cap y$.
\end{enumerate}
This  $K_0$ is not necessarily open in the topology inherited from $\p(\bbN)$
(\S\ref{S.P(N)}). However,  it is open in the topology obtained by identifying
$X$ with a subspace of $\p(\bbN)\times\bbN^\bbN$ via the embedding $x\mapsto
\<x,f_x\>$. We shall use such refinements tacitly quite often and  say only
that the coloring $[X]^2=K_0\cup K_1$ is open, meaning that it is open in some
separable metric topology.

Assume  $K_0\subseteq [X]^2$ is equal to a  union of
countably many rectangles, $K_0=\bigcup_i U_i\times V_i$.
If sets $U_i$ and $V_i$ separate points of $C$, then this
is equivalent to $K_0$
 being open in some separable
metric topology on $X$. Even without this assumption, by
\cite[Proposition~2.2.11]{Fa:AQ}, TA is equivalent to its apparent
strengthening to colorings $K_0$ that can be expressed as a union of
countably many rectangles. Reformulations of TA are  discussed at
length in \cite[\S2]{Fa:AQ}.

\subsection{Absoluteness} A
\emph{vertical section} of $B\subseteq X\times Y$ is a set of the form
$B_x=\{y\mid (x,y)\in B\}$ for some $x\in X$.

\begin{thm}
\lbl{T.Shoenfield} Assume $X$ and $Y$ are Polish spaces and $B\subseteq X\times
Y$ is Borel. Truth (or falsity) of the assertion that some vertical section of
$B$ is empty cannot be changed by going to a forcing extension.

In particular, if there is a proof using TA that $B$ has an empty
vertical section, than $B$ has an empty vertical section.
\end{thm}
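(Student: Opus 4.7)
The plan is to recognize this as a direct application of Shoenfield's absoluteness theorem. After passing through standard Borel isomorphisms I may assume $X=Y=\omega^\omega$. A Borel set $B\subseteq\omega^\omega\times\omega^\omega$ admits a Borel code $c\in\omega^\omega$: a well-founded labelled tree recording how $B$ is built from basic open rectangles by countable unions and complements. There is a $\Pi^1_1$ formula $\phi(c,x,y)$ whose interpretation in any transitive model of enough ZFC containing $c$ expresses ``$(x,y)$ belongs to the Borel set coded by $c$'' (see e.g.\ \cite[\S25]{Ke:Classical}); the point is that Borel codes and their interpretations are absolute between $V$ and any forcing extension $V[G]$.

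With this coding, the assertion ``$B$ has an empty vertical section'' becomes
\[
(\exists x\in\omega^\omega)(\forall y\in\omega^\omega)\neg\phi(c,x,y),
\]
which is $\Sigma^1_2$ in the real parameter $c$. Shoenfield's absoluteness theorem asserts that the truth value of any $\Sigma^1_2$ (equivalently $\Pi^1_2$) statement about a fixed real is the same in $V$ and in any outer model with the same ordinals, in particular in any forcing extension $V[G]$. Applying this to the sentence above yields the first assertion of the theorem: the statement ``some vertical section of $B$ is empty'' holds in $V$ if and only if it holds in $V[G]$.

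For the second assertion, suppose one has a ZFC$+$TA proof that $B$ has an empty vertical section. By the result of Veli\v{c}kovi\'c cited in the introduction (every model of ZFC has a forcing extension satisfying TA), there is a forcing extension $V[G]$ of $V$ in which TA holds; hence in $V[G]$ the Borel set coded by $c$ (computed in $V[G]$) has an empty vertical section. By the absoluteness established in the previous paragraph, $B$ already has an empty vertical section in $V$.

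The one place that needs care, and which I view as the main (purely bookkeeping) obstacle, is verifying that the Borel set in $V[G]$ associated with the code $c$ really is the ``same'' set in the sense needed here, so that the existential witness found in $V[G]$ pulls back correctly; this is handled by the absoluteness of the $\Pi^1_1$ formula $\phi(c,x,y)$ interpreting the Borel code, which is standard and relies only on the well-foundedness of the code being absolute between $V$ and $V[G]$.
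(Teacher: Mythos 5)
Your proposal is correct and takes essentially the same route as the paper, which simply invokes Shoenfield absoluteness for the (provably $\Sigma^1_2$, via a Borel code for $B$) statement ``some vertical section of $B$ is empty'' and then uses Veli\v{c}kovi\'c's theorem that every model of ZFC has a forcing extension satisfying TA. One small bookkeeping point: with $\phi$ chosen $\Pi^1_1$ the literal count for $(\exists x)(\forall y)\neg\phi$ comes out $\Sigma^1_3$, so you should instead use the fact that membership in the set coded by $c$ is (absolutely) $\Delta^1_1(c)$ and take the $\Pi^1_1$ form of $\neg\phi$, which makes the assertion genuinely $\Sigma^1_2(c)$ and puts it within the scope of Shoenfield's theorem.
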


\begin{proof} The first part is a special case of Shoenfield's Absoluteness
Theorem (see e.g., \cite[Theorem~13.15]{Kana:Book}). The second part
follows from a fact that every model of ZFC has a forcing extension
in which TA holds (\cite{Ve:Applications}).
\end{proof}

\section{Coherent families of unitaries} \lbl{S.CFU}

If $u$ is a partial isometry between cofinite-dimensional subspaces of $H$ we
write $\Psi_u(a)=uau^*$. An operator in $\cC(H)$ is invertible if and only if
it is of the form $\pi(a)$ for some Fredholm operator  $a$ (this is Atkinson's
theorem, \cite[Theorem 3.11.11]{Pede:Analysis}; see also
\cite[\S3]{BrDoFi:Unitary}). Therefore, inner automorphisms of $\cC(H)$ are
exactly the ones of the form $\Psi_u$ for a partial isomorphism $u$ between
cofinite-dimensional subspaces of $H$.
 A family $\cF$ of pairs $(\vec E,u)$ such
that
\begin{enumerate}
\item \lbl{I.coherent.1} If $(\vec E,u)\in \cF$ then $\vec E$ is a partition of $\bbN$ into finite intervals
and $u$ is a partial isometry between cofinite-dimensional
suspaces of $H$,
\item  for all $(\vec E,u)$
and $(\vec F,v)$ in $\cF$ and all $a\in \calD[\vec E]\cap \calD[\vec
F]$ the operator $\Psi_u(a)-\Psi_v(a)$ is compact,
\item for every partition $\vec E$ of $\bbN$ into finite intervals
there is $u$ such that $(\vec E,u)\in \cF$,
\end{enumerate}
 is called a \emph{coherent family of unitaries}. (By \eqref{I.coherent.1} above,
 $\pi(u)$ is a unitary in the Calkin algebra for each
 $(\vec E,u)\in \cF$.)
 The
 following is an immediate consequence of
 Lemma~\ref{L.A.3} and Lemma~\ref{L.1}.

\begin{lemma}\lbl{L.U.1} If $\cF$ is a coherent family of
unitaries, then there is a unique automorphism $\Phi$ of $\cC(H)$ such that
$\Psi_u$ is a representation of $\Phi$ on $\calD[\vec E]$ for all $(\vec
E,u)\in \cF$. \qed
\end{lemma}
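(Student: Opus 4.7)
The plan is to reduce the construction of $\Phi$ to an application of Lemma~\ref{L.1}, using Lemma~\ref{L.A.3} to normalize the partial isometries that appear in $\cF$ as diagonal twists of a single reference.

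First I would fix the singleton partition $\vec E^0=(\{n\})_{n\in\bbN}$ together with some $u^0$ for which $(\vec E^0,u^0)\in \cF$; this makes $\calD[\vec E^0]$ the standard atomic masa. For every partition $\vec E$, I select $u_{\vec E}$ with $(\vec E,u_{\vec E})\in \cF$. Since $\calD[\vec E^0]\subseteq \calD[\vec E]$, clause~(2) of the definition of a coherent family forces $\Psi_{u_{\vec E}}$ and $\Psi_{u^0}$ to agree modulo compacts on every operator diagonalized by $(e_n)$. Lemma~\ref{L.A.3} then produces $\alpha_{\vec E}\in \cUN$ with the property that $\Psi_{u_{\vec E}}(a)-\Psi_{u^0\,u_{\alpha_{\vec E}}}(a)\in \cK(H)$ for every $a\in \cB(H)$; equivalently, $\Psi_{u_{\vec E}}$ agrees with $\Psi_{u^0}\circ \Psi_{\alpha_{\vec E}}$ modulo compacts on \emph{all} of $\cB(H)$.

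Next I would invoke Lemma~\ref{L.1} for the $\leq^*$-directed cofinal family of all partitions, equipped with the diagonal unitaries $\alpha_{\vec E}$. The coherence hypothesis on $\cF$, combined with the cancellation of the invertible inner automorphism $\Psi_{u^0}$ made possible by the previous step, yields the required compatibility of the $\Psi_{\alpha_{\vec E}}$'s: for $\vec E\leq^* \vec F$ and $a$ in the intersection $\calD[\vec E]\cap \calD[\vec F]$, one obtains $\Psi_{\alpha_{\vec E}}(a)\equiv \Psi_{\alpha_{\vec F}}(a)$ modulo compacts. Lemma~\ref{L.1} then produces a unique automorphism $\tilde\Phi$ of $\cC(H)$ for which $\Psi_{\alpha_{\vec E}}$ represents $\tilde\Phi$ on $\cF[\vec E]$, and I set $\Phi$ to be the composition of $\tilde\Phi$ with the inner automorphism induced by $u^0$.

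Finally, the identity from the first step shows that $\Psi_{u_{\vec E}}$ represents $\Phi$ on all of $\cB(H)$ and in particular on $\calD[\vec E]$; a second application of coherence transfers this conclusion to every other $u$ with $(\vec E,u)\in \cF$. Uniqueness of $\Phi$ is immediate from Lemma~\ref{L.Arv}: any two automorphisms that agree with the prescribed $\Psi_u$ on each $\pi[\calD[\vec E]]$ must coincide, because every $a\in \cB(H)$ is, modulo compacts, a sum of elements of $\calD[\vec E^{\even}]$ and $\calD[\vec E^{\odd}]$ for some $\vec E$. The main technical hurdle I expect is in the intermediate step: verifying that coherence on $\calD[\vec E]\cap \calD[\vec F]$ actually upgrades to agreement on the strictly larger sets $\cF[\vec E^\xi]$ demanded by Lemma~\ref{L.1}, which forces one to track how the splitting provided by Lemma~\ref{L.Arv} interacts with the $\leq^*$-order on partitions.
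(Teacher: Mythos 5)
Your skeleton is the one the paper intends (it derives this lemma from Lemma~\ref{L.A.3} and Lemma~\ref{L.1}, with uniqueness via Lemma~\ref{L.Arv}), and your normalization against $u^0$ and the cancellation of the inner automorphism $\Ad\pi(u^0)$ are fine. The gap is exactly the step you flag at the end and then use anyway: Lemma~\ref{L.1} demands that comparable members of the family agree modulo compacts on $\cF[\vec E]=\calD[\vec E^{\even}]\cup\calD[\vec E^{\odd}]$, whereas coherence plus cancellation only gives you agreement on $\calD[\vec E]\cap\calD[\vec F]$. This is not a bookkeeping issue that upgrades automatically: with your choice of $\alpha_{\vec E}$ (the normalization of the unitary that $\cF$ attaches to $\vec E$ itself) the stronger agreement can be false. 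For example, let $\cF$ consist of $(\vec F,I)$ for every partition $\vec F$ except one fixed $\vec E_\ast$ with non-singleton blocks, to which you attach $u_\beta$ where $\beta(j)=(-1)^n$ for $j\in E_{\ast,n}$. Since $u_\beta$ commutes with $\calD[\vec E_\ast]$, this is still a coherent family (it determines the identity automorphism), but $\Psi_{\alpha_{\vec E_\ast}}$ agrees with $\Psi_\beta$ modulo compacts on all of $\cB(H)$ while $\alpha_{\vec F}$ is trivial for the other partitions, and by Lemma~\ref{L.A.4}(b) $\Psi_\beta$ does not agree with the identity modulo compacts on $\calD[\vec E_\ast^{\even}]$ because $\Delta_{E_{\ast,2n}\cup E_{\ast,2n+1}}(\beta,1)=2$ for all $n$. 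So the hypothesis of Lemma~\ref{L.1} fails for your family: coherence constrains the unitary attached to $\vec E$ only on $\calD[\vec E]$, and its behaviour on $\calD[\vec E^{\even}]$ can be twisted by any diagonal unitary that is (almost) constant on the blocks of $\vec E$.

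The missing idea is to use, for each $\vec E$, the unitaries that $\cF$ attaches to the \emph{coarser} partitions $\vec E^{\even}$ and $\vec E^{\odd}$ (these are the ones coherence actually controls on $\calD[\vec E^{\even}]$ and $\calD[\vec E^{\odd}]$), normalize both by Lemma~\ref{L.A.3} to diagonal unitaries $\beta,\gamma\in\cUN$, note that $\calD[\vec E^{\even}]\cap\calD[\vec E^{\odd}]=\calD[\vec E]$ so coherence and Lemma~\ref{L.A.4}(b) give $\limsup_n\Delta_{E_n}(\beta,\gamma)=0$, and then patch $\beta$ and $\gamma$ into a single $\alpha^{\vec E}$ by a block-by-block phase correction --- precisely the argument written out later in the proof of Lemma~\ref{L.A.1/2}, resting on Lemma~\ref{L.A.5.5}(2),(3). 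For these patched unitaries the hypothesis of Lemma~\ref{L.1} does hold: for $\vec E\leq^*\vec F$ one splits an element of $\calD[\vec E^{\even}]$ (modulo a finite-rank error) into pieces lying in $\calD[\vec F^{\even}]$ and in $\calD[\vec F^{\odd}]$, since all but finitely many blocks $E_{2n}\cup E_{2n+1}$ lie in some $F_j\cup F_{j+1}$, and applies coherence to each piece. With this repair the rest of your argument (composing with $\Ad\pi(u^0)$, transferring to every $u$ with $(\vec E,u)\in\cF$ by coherence, uniqueness via Lemma~\ref{L.Arv}) goes through, except that the correct conclusion is that $\Psi_{u_{\vec E}}$ represents $\Phi$ on $\calD[\vec E]$ (indeed on $\cF[\vec E]$ for the patched unitary), not on all of $\cB(H)$ as your last paragraph asserts.
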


Such an automorphism  $\Phi$ is \emph{determined by a coherent family of
unitaries}. Since $\calD[\vec E]\subseteq \calD[\vec F]$ whenever $\vec F$ is
coarser than $\vec E$, $\Phi$ is uniquely determined by those $(\vec E,u)\in
\cF$ such that $\# E_n$ is strictly increasing.  In Theorem~\ref{T.1} we have
seen that the Continuum Hypothesis implies the existence of an outer
automorphism determined by a coherent family of unitaries.
The following result, which is the main result of this section, complements Theorem~\ref{T.1}.

\begin{theorem}\lbl{P.1}
Assume TA. Then every automorphism of $\cC(H)$ determined by a
coherent family of unitaries is inner.
\end{theorem}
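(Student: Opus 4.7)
I assume $\Phi$ is determined by a coherent family of unitaries $\cF$ and aim to produce a single partial isometry $v$ between cofinite-dimensional subspaces with $\Psi_v$ representing $\Phi$ on all of $\cB(H)$. My first move is to reduce the global problem to finding one element of $\cUN$. Pick $u_0$ representing $\Phi$ on the standard atomic masa $\calD[\vec E_0]$, which is contained in every $\calD[\vec E]$. For each partition $\vec E$ the coherence of $\cF$ yields that $\Psi_{u_{\vec E}}$ and $\Psi_{u_0}$ agree on the atomic masa modulo compacts, so Lemma~\ref{L.A.3} produces $\alpha_{\vec E}\in \cUN$ with $u_{\vec E}$ and $u_0 u_{\alpha_{\vec E}}$ inducing the same element of $\cC(H)$. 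A short computation using Lemma~\ref{L.A.4}(b) shows that $\Psi_{u_0 u_\alpha}$ represents $\Phi$ on $\calD[\vec E]$ precisely when $\limsup_n \Delta_{E_n}(\alpha, \alpha_{\vec E})=0$. Hence innerness of $\Phi$ reduces to finding a single $\alpha\in \cUN$ satisfying $\limsup_n \Delta_{E_n}(\alpha, \alpha_{\vec E})=0$ for \emph{every} partition~$\vec E$.

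The coherence of $\cF$ translates via Lemma~\ref{L.A.4}(b) into a coherence property of the family $(\alpha_{\vec E})$: for any two partitions $\vec E,\vec F$ the limsup of $\Delta$ on the blocks of their common refinement tends to zero. Thus the question is whether this coherence is strong enough under TA to admit a common $\alpha$; the obstruction to innerness (if any) must be a global, uncountable phenomenon rather than a pairwise one.

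I would invoke TA in the spirit of Veli\v ckovi\'c's OCA proof of triviality of automorphisms of $\cP(\bbN)/\Fin$ (cf.\ the discussion at the end of \S\ref{S.OCA} and the surveys \cite{Fa:Rigidity}, \cite{Fa:AQ}). Assume for contradiction that no uniform $\alpha$ exists. On a Polish parametrization of pairs $(\vec E,\alpha_{\vec E})$---embedded into $\cP(\bbN)\times \bbN^\bbN$ via auxiliary functions encoding witnesses to $\Delta$-divergence, exactly as illustrated after the TA statement---define the open coloring $\{(\vec E,\alpha_{\vec E}),(\vec F,\alpha_{\vec F})\}\in K_0$ meaning that $\alpha_{\vec E}$ and $\alpha_{\vec F}$ exhibit a quantitatively open, robust disagreement on some common block. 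TA then offers two alternatives. An uncountable $K_0$-homogeneous set would contradict the coherence of $(\alpha_{\vec E})$ established above, since that coherence forbids robust pairwise disagreement. A countable cover by $K_1$-homogeneous pieces supplies, together with the Jankov--von Neumann uniformization (Theorem~\ref{T.JvN}), a C-measurable choice of local phase data on each piece, which can be patched (using Lemma~\ref{L.A.5.5}(3),(4) to absorb phase ambiguities between pieces) into the required global $\alpha\in \cUN$.

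The hard part will be engineering the open coloring so that both alternatives really do lead where I want. The $\limsup$-nature of $\Delta_{E_n}$ is not itself open, so one must convert it into an open condition in a finer topology by recording explicit witnesses to failure; and checking that the $K_1$-side supports a clean gluing requires using the same kind of triangle-inequality and phase-freedom bookkeeping with $\Delta$ that underlies the limit step in the proof of Theorem~\ref{T.1}. This is also where one expects the argument to branch between a direct metric-type construction and a genuine appeal to TA, since the $\fd=\aleph_1$ remark at the end of \S\ref{S.CH} suggests that the coherent-family side alone is not sufficient to force innerness.
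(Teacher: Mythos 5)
Your high-level strategy matches the paper's: reduce to producing a single $\alpha\in\cUN$ that works for every partition, set up an open coloring $K_0^\e$ on pairs $(f,\alpha_f)$ detecting a quantitative disagreement $\Delta_J(\alpha_f,\alpha_g)>\e$ on a common block, and run the TA dichotomy. But two steps in your plan rest on claims that are not true as stated, and each of them is where the actual work lies.

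\textbf{The $K_0$-homogeneous side.} You write that an uncountable $K_0^\e$-homogeneous set ``would contradict the coherence of $(\alpha_{\vec E})$\dots since that coherence forbids robust pairwise disagreement.'' It doesn't. Coherence is a $\limsup$ condition: $\Delta$ on blocks of the common refinement of $\vec E,\vec F$ tends to $0$, which leaves \emph{finitely many} blocks with $\Delta>\e$, and that is precisely what $K_0^\e$ asks for. So coherence is fully compatible with $K_0^\e$-homogeneity. What actually kills an uncountable $K_0^\e$-homogeneous set $\calH$ in the paper's Claim~\ref{C.A.1} is the TA consequence (via \cite[Theorem~3.4 and Theorem~8.5]{To:Partition}) that the functions $g^+$ for $(g,\alpha)\in\calH$ are $\leq^*$-\emph{bounded} by some $\bar f$; one then fixes a single representative $\alpha$ on the fast partitions for $\bar f$, refines $\calH$ by separability to find two members that agree past a threshold, and uses the triangle inequalities for $\Delta$ (Lemma~\ref{L.A.5.5}(3),(4)) to force their pair into $K_1^\e$. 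Without the bounding step your contradiction simply doesn't materialize.

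\textbf{The $K_1$-cover side.} You propose to use Jankov--von Neumann to pick C-measurable local data on each $K_1$-piece and then ``patch'' via Lemma~\ref{L.A.5.5}(3),(4). The paper's proof of Theorem~\ref{P.1} uses no uniformization theorem at all. The countable cover is fed into Lemma~\ref{L.NupN} to select $\leq^*$-cofinal nested pieces $\cX_0\supseteq\cX_1\supseteq\cdots$ that are $K_1^{\e_n}$-homogeneous, sequences $(f_{n,i},\alpha_{n,i})$ are chosen in each by hand, limits $\alpha_n$ are extracted by \emph{compactness} of $\cUN$, and the resulting relative phases $z_{k,l}\in\bbT$ only approximately cocycle. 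The genuinely nontrivial step is then an application of \emph{Ramsey's theorem} to triples in $\bbN$ to find an infinite subsequence on which the phases nearly align; only after that can one splice the $\alpha_{k(i)}$'s on consecutive intervals $[m(k(i)),m(k(i+1)))$ into a single $\gamma\in\cUN$. Jankov--von Neumann does not give you the phase coherence across pieces that makes the splicing possible, and Lemma~\ref{L.A.5.5}(3),(4) alone doesn't resolve the ambiguity because each piece carries its own unknown $\bbT$-phase. You would need to rediscover the compactness-plus-Ramsey alignment argument to close this gap.

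You were right that the argument must branch genuinely on TA (and not reduce to a metric/cofinality trick), and your choice of coloring is essentially the paper's. But the two steps above are the heart of the proof, and as sketched your plan does not contain a correct argument for either.
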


We shall first show that it suffices to prove Theorem~\ref{P.1} 
in the case when each $u$ is of the form $u_\alpha$ for $\alpha\in \cUN$, 
as constructed in \S\ref{S.CH}. 
If $\Phi$ is determined by $\cF$,  fix $(\vec E_0,u_0)\in \cF$. Then
$\cF'=\{(\vec F,v(u_0)^*)\mid  (\vec F,v)\in \cF\}$ is a coherent family of
unitaries. The automorphism $\Phi'$ determined by $\cF'$ is inner if and only
if $\Phi$ is inner. Also, $\Phi'$ is equal to the identity on the standard
atomic masa.  In the proof of
Theorem~\ref{P.1} we may therefore assume $\Phi$ is equal to the identity
on the standard atomic masa. Recall that ${\bbT}$ is the circle group.
For $\alpha\in \cUN$ by $u_\alpha$ denote the
unitary that sends $e_n$ to $\alpha_ne_n$. By our convention and
Lemma~\ref{L.A.3}, for every $(\vec E,u)\in \cF$ there is $\alpha$ such that
$\Psi_{u_\alpha}$ and $\Psi_u$ agree modulo compacts on $\cB(H)$. We may
therefore identify $\cF$ with the family $\{(\vec E,\alpha)\mid (\vec E,u)\in
\cF, \Psi_u$ and $\Psi_{u_\alpha}$ agree modulo compacts$\}$. It will also be
convenient to code partitions $\vec E$ by functions $f\colon \bbN\to \bbN$.

\subsection{The directed set $(\NupN,\leq^*)$} Let $\NupN$ denote the set of
all strictly increasing functions $f\colon \bbN\to \bbN$ such that $f(0)>0$.
(The reader should be warned that the  requirement that $f(0)>0$ is \emph{nonstandard} and 
\emph{important}.)
Such a function can code a partition of $\bbN$ into finite intervals in more
than one way.
 It will be convenient to use the following quantifiers:
$(\forall^\infty n)$ stands for $(\exists n_0)(\forall n\geq n_0)$ and
$(\exists^\infty n)$ stands for the dual quantifier, $(\forall n_0)(\exists
n\geq n_0)$. For $f$ and $g$ in $\NupN$ write $f\geq^* g$ if $(\forall^\infty
n)f(n)\geq g(n)$. A diagonal argument shows that $\NupN$ is
\emph{$\sigma$-directed} in the sense that for each sequence  $(f_n)$ in
$\NupN$ there is $g\in \NupN$ such that $f_n\leq^* g$ for all $n$.

For $f\in \NupN$ recursively define $f^+$ by $f^+(0)=f(0)$ and
$f^+(i+1)=f(f^+(i))$. Some $\cX\subseteq \NupN$ is \emph{$\leq^*$-cofinal} if
$(\forall f\in \NupN)(\exists g\in \cX)f\leq^* g$.

\begin{lemma}\lbl{L.NupN} Assume $\cX\subseteq \NupN$ is $\leq^*$-cofinal.
\begin{enumerate}
\item If  $\cX$ is partitioned into countably many pieces,
then at least one of the pieces is $\leq^*$-cofinal.
\item $(\exists^\infty n)(\exists i)(\forall k\geq n)(\exists f\in\cX)
(f(i)\leq n$ and $f(i+1)\geq k)$.
\item $\{f^+\mid f\in \cX\}$ is $\leq^*$-cofinal.
\end{enumerate}
\end{lemma}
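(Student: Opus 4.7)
The three parts can be proved in the order (1), (3), (2), since (2) will draw on (1). The engine for (1) is the $\sigma$-directedness of $(\NupN,\leq^*)$ noted just before the lemma. Assuming $\cX = \bigsqcup_n \cX_n$ with no $\cX_n$ being $\leq^*$-cofinal, I fix for each $n$ a witness $g_n \in \NupN$ such that no $f \in \cX_n$ satisfies $f \geq^* g_n$, use $\sigma$-directedness to produce $g \in \NupN$ with $g_n \leq^* g$ for every $n$, and observe that any $f \in \cX$ lies in some $\cX_n$ and hence satisfies $f(m) < g_n(m) \leq g(m)$ for infinitely many $m$, giving $f \not\geq^* g$ and contradicting cofinality of $\cX$.

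For (3), given $g \in \NupN$, I first use cofinality to obtain $f \in \cX$ with $f(n) \geq g(n)$ for $n \geq N$. Since $f(0) \geq 1$ and $f$ is strictly increasing, an easy induction gives $f(m) \geq m+1$ and hence $f^+(i) \geq i+1$ for every $i$. For $i$ so large that $f^+(i) \geq N$, monotonicity of $g$ gives
\[
f^+(i+1) = f(f^+(i)) \geq g(f^+(i)) \geq g(i+1),
\]
so $f^+ \geq^* g$.

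Part (2) is the substance of the lemma, and my plan is a proof by contradiction. Negating the statement yields $n_0$ such that for every $n \geq n_0$ and every $i$ one can choose $k(n,i) \geq n$ with no $f \in \cX$ simultaneously satisfying $f(i) \leq n$ and $f(i+1) \geq k(n,i)$. I partition $\cX$ into the countable family $\cX_a = \{f \in \cX : f(0) = a\}$ indexed by $a \geq 1$ and use (1) to pick $a$ for which $\cX_a$ is $\leq^*$-cofinal. Then I define $b \in \NupN$ recursively by $b(0) = \max(a,n_0)$ and $b(i+1) = \max(b(i)+1,\, k(b(i),i))$, which is strictly increasing with $b(i) \geq n_0$ throughout. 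An induction on $i$, applying the negated hypothesis at $n = b(i)$, shows $f(i) \leq b(i)$ for every $f \in \cX_a$ and every $i$: the base case is $f(0) = a \leq b(0)$, and the step passes from $f(i) \leq b(i)$ to $f(i+1) < k(b(i),i) \leq b(i+1)$. Consequently no $f \in \cX_a$ can $\leq^*$-dominate the function $i \mapsto b(i)+1$ (which lies in $\NupN$), contradicting cofinality of $\cX_a$.

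The main obstacle to anticipate is the bookkeeping in (2): one must ensure $b$ stays in $\NupN$ (strict increase is enforced by the $b(i)+1$ in the recursion), that $b(i) \geq n_0$ throughout so the negation is applicable at every step, and that the base case lines up ($f(0) = a \leq b(0) = \max(a,n_0)$). Once these are arranged, the inductive step and the contradiction with (1) are both immediate, and picking the partition of $\cX$ by $f(0)$ is exactly what makes the whole scheme fit together.
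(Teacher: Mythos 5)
Your proofs of (1) and (3) are correct and essentially the paper's own arguments (for (3) the paper simply notes $f(i)\leq f^+(i)$ pointwise, which shortcuts your computation, but the content is the same). For (2), however, you take a genuinely different route, and it works. The paper first proves the weaker intermediate statement $(\exists^\infty n)(\forall k\geq n)(\exists i)(\exists f\in\cX)(f(i)\leq n$ and $f(i+1)\geq k)$, whose negation supplies a bound $k=g(n)$ uniform in $i$; it then iterates $g$ to build functions $h_m$ with $h_m(0)=\max(m,n_0)$ and $h_m(i+1)=g(h_m(i))$, shows every $f\in\cX$ is pointwise below $h_{f(0)}$, dominates the countable family $(h_m)$ using $\sigma$-directedness to contradict cofinality, and finally upgrades to the stated form of (2) by observing that $\{i\mid(\exists f\in\cX)\,f(i)\leq n\}$ is finite (since $f(i)\geq i+1$), so a single $i$ serves infinitely many, hence all, $k\geq n$. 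You instead negate (2) directly, which only yields a bound $k(n,i)$ depending on both $n$ and $i$; you absorb the $i$-dependence into the recursion $b(i+1)=\max(b(i)+1,\,k(b(i),i))$, and you dispose of the dependence on $f(0)$ by invoking part (1) for the partition of $\cX$ by the value of $f(0)$, rather than by dominating a countable family of iterates. Your version buys a one-step proof of (2) with no intermediate statement and no pigeonhole over the finitely many possible $i$; the paper's version isolates the uniform-$k$ statement and never needs to feed part (1) into part (2). All the bookkeeping you flag (that $b$ is strictly increasing, that $b(i)\geq n_0$ throughout so the negated hypothesis applies at every step, and that $i\mapsto b(i)+1$ lies in $\NupN$ and is $\leq^*$-dominated by no $f\in\cX_a$) checks out.
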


\begin{proof} (1) Assume the contrary, let $\cX=\bigcup_n \cY_n$
be such that no $\cY_n$ is cofinal. Pick $f_n$ such that $f_n\not\leq^* g$ for
all $g\in \cY_n$. If $f\geq^* f_n$ for all $n$, then there is no $g\in \cX$
such that $f\leq^* g$---a contradiction.

(2) We first prove
\begin{enumerate}
\item [(*)]$(\exists^\infty n)(\forall k\geq n)(\exists i)(\exists f\in \cX)(f(i)\leq n$
and $f(i+1)\geq k)$.
\end{enumerate}
Assume not and fix $n_0$ such that for all $n\geq n_0$ there is $k=g(n)$ such
that for all $f\in \cX$ and all $i$, if $f(i)\leq n$ then $f(i+1)\leq g(n)$.
Define functions $h_m$  for $m\in \bbN$ recursively by $h_m(0)=\max(m,n_0)$ and
$h_m(i+1)=g(h_m(i))$. For $f\in \cX$ we have $f\leq^* h_{f(0)}$. By recursion
we prove $f(i)\leq h_{f(0)}(i)$ for all $i$. For $i=0$ this is immediate.
Assume $f(i)\leq h_{f(0)}(i)$. Then $f(i+1)\leq g(f(i))\leq
g(h_{f(0)}(i))=h_{f(0)}(i+1)$. Now fix $h\in \NupN$ such that $h_m\leq^* h$. By
the above, there is no $f\in \cX$ such that $h\leq^* f$, a contradiction.

For each $n$ the set $\{i\mid (\exists f\in \cX)f(i)\leq n\}$ is finite.
Therefore in (*) the same $i$ works for infinitely many $k$. An easy induction
shows that for  $f\in \NupN$ we have
 $f(i)\leq f^+(i)$
for all $i$, and (3) follows.
\end{proof}

\begin{lemma} \lbl{L.A.-1} If $f,g\in \NupN$ and $f\geq^* g$ then for all but finitely
many $n$ there is $i$ such that $f^i(0)\leq g(n)<g(n+1)\leq f^{i+2}(0)$. If
moreover $f(m)\geq g(m)$  for all $m$, then for every $n$ there is such an $i$.
\end{lemma}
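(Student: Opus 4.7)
\medbreak
\noindent\textbf{Proof proposal.} The plan is a direct calculation once the right index $i$ is chosen. First I would record the basic observation that every $h\in \NupN$ satisfies $h(m)\geq m+1$ for all $m$, proved by induction from $h(0)\geq 1$ and strict monotonicity. In particular the iterates $f^i(0)$ strictly increase in $i$ and tend to infinity, so for each $n$ there is a largest $i=i(n)$ with $f^i(0)\leq g(n)$. By maximality this already yields the left half of the sandwich,
\[
f^i(0)\leq g(n)<f^{i+1}(0),
\]
and it remains to establish the right half $g(n+1)\leq f^{i+2}(0)$.

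Next I would fix $n_0$ such that $f(m)\geq g(m)$ for all $m\geq n_0$ (taking $n_0=0$ in the moreover clause). Apply the observation above to $g$ to get $n+1\leq g(n)<f^{i+1}(0)$, hence $n+1\leq f^{i+1}(0)$, and then monotonicity of $g$ gives $g(n+1)\leq g(f^{i+1}(0))$. Provided $f^{i+1}(0)\geq n_0$, the hypothesis $g\leq f$ on $[n_0,\infty)$ yields
\[
g(f^{i+1}(0))\leq f(f^{i+1}(0))=f^{i+2}(0),
\]
as desired. Finally $f^{i+1}(0)>g(n)\geq n+1$, so $n\geq n_0$ already forces $f^{i+1}(0)>n_0$ and the argument goes through.

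This handles all $n\geq n_0$, proving the first statement with only finitely many exceptional $n$; and with $n_0=0$ admissible in the moreover case, the same bound holds for every $n$. I do not anticipate any genuine obstacle here: the entire content of the lemma is the clever choice of $i$ as the largest integer with $f^i(0)\leq g(n)$, after which the two required inequalities are immediate from monotonicity of $g$, the lower bound $g(n)\geq n+1$, and the hypothesis $f\geq^* g$.
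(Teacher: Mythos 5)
Your proof is correct and follows essentially the same strategy as the paper's: sandwich $g(n)$ in $[f^i(0),f^{i+1}(0))$, then use $g(n)\geq n+1$ plus the eventual domination $f\geq g$ to push $g(n+1)$ below $f^{i+2}(0)$. The only cosmetic difference is the direction of the final chain (the paper writes $f^{i+2}(0)\geq f(g(n))\geq f(n+1)\geq g(n+1)$, applying $f$, while you write $g(n+1)\leq g(f^{i+1}(0))\leq f(f^{i+1}(0))=f^{i+2}(0)$, applying $g$), and your choice of $i$ as the maximal one with $f^i(0)\leq g(n)$ versus the paper's minimal $i$ with $f^{i+1}(0)\geq g(n)$ — these coincide except in the boundary case $g(n)=f^j(0)$, where either works.
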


\begin{proof} If $n$ is such that $f(m)\geq g(m)$ for all $m\geq n$,
let $i$ be the minimal such that $f^{i+1}(0)\geq g(n)$. Then $f^{i+2}(0)\geq
f(g(n))$, and since $g\in \NupN$ implies $g(n)\geq n+1$ this is $\geq
f(n+1)\geq g(n+1)$.
\end{proof}

 To $f\in \NupN$ associate
sequences of finite intervals of $\bbN$:
\begin{align*}
E^f_n&=[f(n),f(n+1))\\
F^f_n&=[f^n(0),f^{n+2}(0))\\
F^{f,\even}_n&=[f^{2n}(0),f^{2n+2}(0))\\
F^{f,\odd}_n&=[f^{2n+1}(0), f^{2n+3}(0))
\end{align*}
(`$F$' is for `fast'). By Lemma~\ref{L.A.-1}, if $\cX\subseteq \NupN$ is
$\leq^*$-cofinal, then each one of $\{\vec F^{f,\even}\mid f\in \cX\}$ and
$\{\vec F^{f,\odd}\mid f\in \cX\}$ is a cofinal family of partitions as defined
in \S\ref{S.CH}. Notation $\Delta_I(\alpha,\beta)$ used in the following proof
was defined before Lemma~\ref{L.A.5.5}.

\begin{lemma}\lbl{L.A.1/2} Assume $\Phi$ is an automorphism of $\cC(H)$
whose restriction to the standard atomic masa is equal to the identity and
which is determined by a coherent family of unitaries. For each $f\in \NupN$
there is $\alpha\in \cUN$ such that $\Psi_\alpha$ is a representation of $\Phi$
on both $\calD[\vec F^{f,\even}]$ and $\calD[\vec F^{f,\odd}]$.
\end{lemma}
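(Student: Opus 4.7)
The plan is to represent $\Phi$ separately on $\calD[\vec F^{f,\even}]$ and on $\calD[\vec F^{f,\odd}]$ by two diagonal unitaries $\alpha_0, \alpha_1 \in \cUN$, and then to reconcile them into a single $\alpha$ by multiplying $\alpha_0$ by appropriate scalars on each $\vec F^{f,\even}$-block.

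Using property~(3) of a coherent family, pick $(\vec F^{f,\even}, u_0)$ and $(\vec F^{f,\odd}, u_1)$ in $\cF$. Since $\Phi$ restricts to the identity on the standard atomic masa, $\Psi_{u_i}(a) - a$ is compact for every diagonal $a$, so Lemma~\ref{L.A.3} applied with $v = I$ produces $\alpha_0, \alpha_1 \in \cUN$ such that $\Psi_{\alpha_i}$ and $\Psi_{u_i}$ agree modulo compacts on all of $\cB(H)$. Consequently $\Psi_{\alpha_0}$ represents $\Phi$ on $\calD[\vec F^{f,\even}]$ and $\Psi_{\alpha_1}$ on $\calD[\vec F^{f,\odd}]$. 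Let $\vec G$ be the common refinement of the two partitions, with $G_n = [f^n(0), f^{n+1}(0))$; then $\calD[\vec G] \subseteq \calD[\vec F^{f,\even}] \cap \calD[\vec F^{f,\odd}]$, both $\Psi_{\alpha_0}$ and $\Psi_{\alpha_1}$ represent $\Phi$ on $\calD[\vec G]$, and Lemma~\ref{L.A.4}(b) yields $\e_n := \Delta_{G_n}(\alpha_0, \alpha_1) \to 0$. Writing $\phi = \alpha_0 \overline{\alpha_1}$ and $\phi_n := \phi(f^n(0))$, this gives $|\phi(i) - \phi_n| \leq \e_n$ for all $i \in G_n$.

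The $\alpha$ is then defined by setting $c_0 = 1$, $c_{n+1} = c_n \phi_{2n+1} \overline{\phi_{2n+2}}$ recursively in $\bbT$, and $\alpha(i) = c_n \alpha_0(i)$ for $i \in F^{f,\even}_n$. Since $u_\alpha u_{\alpha_0}^*$ is a scalar on each $\vec F^{f,\even}$-block, it is central in $\calD[\vec F^{f,\even}]$, so $\Psi_\alpha$ coincides with $\Psi_{\alpha_0}$ on $\calD[\vec F^{f,\even}]$ and represents $\Phi$ there. By Lemma~\ref{L.A.4}(b), the only thing left is to show $\Delta_{F^{f,\odd}_n}(\alpha, \alpha_1) \to 0$. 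Within a single $G_{2n+k}$ ($k \in \{1, 2\}$) the scalar-invariance of $\rho$ (as in the proof of Lemma~\ref{L.A.5.5}(3)) gives $\rho(i,j,\alpha,\alpha_1) = \rho(i,j,\alpha_0,\alpha_1) \leq \e_{2n+k}$. The main obstacle is the boundary case $i \in G_{2n+1}$, $j \in G_{2n+2}$: a direct expansion, using that the recursion was designed so that $c_n \overline{c_{n+1}} = \overline{\phi_{2n+1}} \phi_{2n+2}$, reduces $\rho(i,j,\alpha,\alpha_1)$ to
\[
\bigl|\phi(i)\overline{\phi(j)} - \phi_{2n+1}\overline{\phi_{2n+2}}\bigr| \leq \e_{2n+1} + \e_{2n+2}.
\]
Combining the three cases, $\Delta_{F^{f,\odd}_n}(\alpha, \alpha_1) \leq \e_{2n+1} + \e_{2n+2} \to 0$, completing the construction. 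No set-theoretic axioms beyond ZFC are needed.
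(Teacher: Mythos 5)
Your proposal is correct and follows essentially the same route as the paper: both proofs extract two diagonal unitaries via Lemma~\ref{L.A.3} (the paper's $\beta,\gamma$, your $\alpha_0,\alpha_1$), then use Lemma~\ref{L.A.4}(b) on the common refinement $\vec G$ with $G_n = [f^n(0),f^{n+1}(0))$ to get $\Delta_{G_n}\to 0$, and finally reconcile the two by multiplying by a unimodular scalar on each block. The only cosmetic difference is that you rescale only $\alpha_0$ and verify $\Delta_{F^{f,\odd}_n}(\alpha,\alpha_1)\to 0$ directly, whereas the paper rescales both $\beta$ and $\gamma$ so that they agree at each $f^n(0)$ and then invokes Lemma~\ref{L.A.4}(a) from pointwise convergence; the two recursions encode the same phase-matching idea.
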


\begin{proof}
 By Lemma~\ref{L.A.3} for each $f$ there are
$\beta$ and $\gamma$ in $\cUN$ such that $\Psi_\beta$ is a representation of
$\Phi$ on $\calD[\vec F^{f,\even}]$ and $\Psi_\gamma$ is a representation of
$\Phi$ on $\calD[\vec F^{f,\odd}]$. Define $\beta'$ and $\gamma'$ recursively
as follows. For $i\in [f^0(0), f^2(0))$ let $\beta'(i)=\beta(i)$. If
$\beta'(i)$ has been defined for $i<f^{2n}(0)$, then for $i\in
[f^{2n-1}(0),f^{2n+1}(0))$ let
$$
\gamma'(i)=\gamma(i)\overline{\gamma(f^{2n-1}(0))}\beta'(f^{2n-1}(0)).
$$
If $\gamma'(i)$ has been defined for  $i<f^{2n+1}(0)$
then for $i\in [f^{2n}(0),f^{2n+2}(0))$ let
$$
\beta'(i)=\beta(i)\overline{\beta(f^{2n}(0))}\gamma'(f^{2n}(0)).
$$
Then $\gamma'(f^j(0))=\beta'(f^j(0))$ for all $j$ and
$\Psi_{\beta'}(a)=\Psi_\beta(a)$ for all $a\in \calD[\vec F^{f,\even}]$ and
$\Psi_{\gamma'}(a)=\Psi_\gamma(a)$ for all $a\in \calD[\vec F^{f,\odd}]$. Let
$J_n=[f^n(0),f^{n+1}(0))$. 
Then $\sup_{i\in
J_n}|\beta'(i)-\gamma'(i)|\leq \Delta_{J_{n}}(\beta,\gamma')$ by
Lemma~\ref{L.A.5.5}(2). Since $\Psi_{\beta'}$, $\Psi_\beta$, $\Psi_{\gamma'}$
and $\Psi_\gamma$ are all representations of $\Phi$ on $\calD[\vec J]$, by
Lemma~\ref{L.A.4} (b) we conclude that $\Delta_{J_{2n+1}}(\beta',\gamma)\to 0$
and $\Delta_{J_{2n}}(\beta,\gamma')\to 0$ as $n\to \infty$. Therefore  $\lim_i
|\beta'(i)-\gamma'(i)|=0$, and therefore Lemma~\ref{L.A.4} (a) implies that
$\Psi_{\gamma'}$ and $\Psi_{\beta'}$ agree on $\cB(H)$ modulo the compact
operators. Therefore $\alpha=\beta'$ is as required.
\end{proof}

\begin{proof}[Proof of Theorem~\ref{P.1}]
As pointed out after the statement of Theorem~\ref{P.1}, we may assume $\Phi$ is equal to
the identity on the standard atomic masa 
and that the unitary $u$ is of the form $u_\alpha$
for each $(f,u)$ in the
coherent family defining $\Phi$.  
Let $\cX\subseteq \bbN^{\uparrow
\bbN}\times {\bbT}^{\bbN}$ be the set of all pairs $(f,\alpha)$ such that
$\Psi_\alpha$ is a representation of $\Phi$ on both $\calD[\vec F^{f,\even}]$
and $\calD[\vec F^{f,\odd}]$. By Lemma~\ref{L.A.1/2} for every $f\in \NupN$
there is~$\alpha$ such that $(f,\alpha)\in \cX$. 

For $\e>0$ define $[\cX]^2=K^\e_0\cup K^\e_1$
by assigning a pair $(f,\alpha)$, $(g,\beta)$ to $K^\e_0$
if 
\begin{enumerate}
\item [($K^\e_0$)] there are $m,n$ such that with
$J=F^f_m\cap F^g_n$ we have
$\Delta_J(\alpha,\beta)>\e$.
\end{enumerate}
 We consider $\bbN^{\uparrow \bbN}$ with the \emph{Baire space topology},
 induced by the metric
 $$
 d(f,g)=2^{-\min\{n\mid f(n)\neq g(n)\}}.
 $$
This is a complete separable metric. Consider $\cUN$ in the product of strong
operator topology and $\cX$ in the product  topology.  If
$K_0^\e$ is identified with a symmetric subset of $\cX^2$ off the diagonal,
then it is open in this topology.

\begin{claim}\lbl{C.A.1}  TA implies that
for  $\e>0$ there are no  uncountable $K^\e_0$-ho\-mo\-ge\-ne\-ous
subsets
 of $\cX$.
\end{claim}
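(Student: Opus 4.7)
The plan is to suppose toward a contradiction that $\mathcal{Y}\subseteq\cX$ is an uncountable $K^\e_0$-homogeneous set, and to derive a contradiction in five stages. First I would exploit the coherence of $\Phi$: for any two $(f,\alpha),(g,\beta)\in\cX$ and any parities $\varepsilon,\delta\in\{\even,\odd\}$, both $\Psi_\alpha$ and $\Psi_\beta$ represent $\Phi$ on $\calD[\vec F^{f,\varepsilon}]$ and $\calD[\vec F^{g,\delta}]$ respectively, so on the smaller FDD algebra $\calD[\vec H]$ of the common refinement $\vec H$ of $\vec F^{f,\varepsilon}$ and $\vec F^{g,\delta}$ both act as representations. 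Hence $\Psi_\alpha(a)-\Psi_\beta(a)$ is compact for each $a\in\calD[\vec H]$, and Lemma~\ref{L.A.4}(b) yields $\Delta_{H_k}(\alpha,\beta)\to 0$. Every nonempty $F^f_m\cap F^g_n$ coincides with an interval of one such common refinement (the one determined by the parities of $m$ and $n$), so only finitely many pairs $(m,n)$ satisfy $\Delta_{F^f_m\cap F^g_n}(\alpha,\beta)>\e$. I record the lex-first such pair as $w(\{(f,\alpha),(g,\beta)\})\in\bbN^2$.

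Second, to keep the intervals $F^f_m$ and $F^g_n$ that arise in a finite set of possibilities, I would invoke TA in the form of Todorcevic's theorem that TA implies $\fb>\aleph_1$. The projection $\{f:(f,\alpha)\in\mathcal{Y}\}$ has cardinality at most $\aleph_1$, so it is $\leq^*$-bounded by some $h\in\NupN$; by two rounds of pigeonhole (first on the threshold $N$ after which $f(n)\leq h(n)$, then on the finitely many initial segments of $f$ up to $N$) I pass to an uncountable $\mathcal{Y}'\subseteq\mathcal{Y}$ and to a single $h'\in\NupN$ with $f(n)\leq h'(n)$ for every $n$ and every $(f,\alpha)\in\mathcal{Y}'$. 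Next I apply the classical partition relation $\aleph_1\to(\omega)^2_\omega$ (valid in ZFC) to the countable-valued coloring $w$ restricted to $[\mathcal{Y}']^2$, extracting a countably infinite $\mathcal{Y}^*=\{(f_\ell,\alpha_\ell):\ell\in\bbN\}\subseteq\mathcal{Y}'$ on which $w$ is constantly $(m_0,n_0)$.

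Finally, since $f_\ell\leq h'$ pointwise, $f_\ell^{m_0+2}(0)\leq h'^{m_0+2}(0)$ and similarly for $n_0+2$, so the intervals $F^{f_\ell}_{m_0}$ and $F^{f_\ell}_{n_0}$ each take only finitely many values as $\ell$ ranges over $\mathcal{Y}^*$. A further pigeonhole on the countably infinite $\mathcal{Y}^*$ gives an infinite sub-subsequence along which $F^{f_\ell}_{m_0}=I$ and $F^{f_\ell}_{n_0}=J$ are both constant, and setting $I_0=I\cap J$ I then have $\Delta_{I_0}(\alpha_\ell,\alpha_{\ell'})>\e$ for every pair of distinct indices in this sub-subsequence. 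But the restrictions $\alpha_\ell\rs I_0$ lie in the compact finite-dimensional torus $\bbT^{I_0}$, so by sequential compactness two of them come within $\e/2$ in the supremum norm, whereupon Lemma~\ref{L.A.5.5}(1) yields $\Delta_{I_0}(\alpha_\ell,\alpha_{\ell'})\leq\e$, the desired contradiction. I expect the main obstacle to be the second stage: TA (rather than just ZFC) is really needed to keep the $f$-iterates in a uniformly bounded region, since otherwise the intervals $F^f_{m_0}$ might drift to infinity along a countable subsequence and the concluding pigeonhole would fail.
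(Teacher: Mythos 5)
Your first two stages are sound and in the spirit of the paper's proof: the observation that for $(f,\alpha),(g,\beta)\in\cX$ only finitely many intersections $F^f_m\cap F^g_n$ have $\Delta>\e$ (by coherence of $\Phi$ on a common refinement and Lemma~\ref{L.A.4}(b)), and the use of TA in the form $\fb>\aleph_1$ together with a pigeonhole on thresholds and initial segments to get an uncountable subfamily dominated \emph{pointwise} by a single $h'$, both match the paper's strategy (the paper works with $g^+$ rather than $g$, but that difference is only needed for its own later inclusion $F^g_i\subseteq F^{\bar f}_l$; your pointwise bound $f_\ell^k(0)\leq h'^k(0)$ is fine for what you do with it).

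The fatal gap is the third stage. The relation $\aleph_1\to(\omega)^2_\omega$ is \emph{not} a theorem of ZFC; it is in fact refutable. Fix distinct reals $r_\xi\in 2^{\bbN}$ for $\xi<\omega_1$ and color $\{\xi,\eta\}$ by $\min\{n: r_\xi(n)\neq r_\eta(n)\}$. Any set homogeneous for color $n$ has all its elements agreeing on $[0,n)$ and pairwise disagreeing at $n$; since $r(n)\in\{0,1\}$, such a set has at most two elements, so $\omega_1\not\to(3)^2_\omega$, hence a fortiori $\omega_1\not\to(\omega)^2_\omega$. Thus you cannot extract an infinite $w$-homogeneous $\mathcal{Y}^*$, and the final compactness pigeonhole (which needs a \emph{fixed} witnessing index $(m_0,n_0)$ across many pairs) collapses. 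This is not a cosmetic issue: the difficulty in the lemma is precisely that the ``bad index'' a priori varies with the pair, and you need some mechanism other than Ramsey to pin it down.

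The paper solves this with a pivot rather than a partition relation. After bounding by $\bar f$ it fixes, via Lemma~\ref{L.A.1/2}, a single $\alpha$ that represents $\Phi$ on both $\calD[\vec F^{\bar f,\even}]$ and $\calD[\vec F^{\bar f,\odd}]$; then for every $(g,\beta)$ in the homogeneous set one has $\Delta(\alpha,\beta)\to 0$ on the relevant intervals, uniformly enough (after one more pigeonhole on $\bar m,\bar k$) that two members $(g,\beta),(h,\gamma)$ which are merely close in the separable space $\cUN$ and agree on a finite initial segment of iterates already satisfy $\Delta_{F^g_i\cap F^h_j}(\beta,\gamma)<\e$ for \emph{all} $i,j$ simultaneously, via the triangle inequality through $\alpha$. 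That single fixed comparison point replaces the homogenization step and is the idea your write-up is missing.
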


\begin{proof} Assume the contrary and fix $\e>0$ and
an uncountable $K^\e_0$-ho\-mo\-ge\-ne\-ous set $\calH$. We shall
refine $\calH$ to an uncountable subset several times, until we reach a
contradiction. In order to keep the notation under control, each
successive refinement will be called $\calH$. 
Let
\[
\cF =\{g^{+}\mid (\exists \alpha)(g,\alpha)\in \calH\}.
\]
We may assume $\calH$
has size~$\aleph_1$, hence TA and \cite[Theorem~3.4 and
Theorem~8.5]{To:Partition} imply that  $\cF$ is
 $\leq^*$-bounded by some $\bar f\in \NupN$.
For each $g\in \cF$ fix~$l_g$ such that $\bar f(n)\geq g(n)$ for all $n\geq l_g$ and
let $s_g=g\rs l_g$. Fix $(\bar l,\bar s)$ such that $\{g\in \cF\mid
(l_g,s_g)=(\bar l,\bar s)\}$ is uncountable. By refining $\calH$ and 
increasing $\bar f\rs \bar l$ to
$\bar f(\bar l)$ we may assume $\bar f(n)\geq g^+(n)$ for all $g^+\in \cF$ and all
$n\in \bbN$. 
 Lemma~\ref{L.A.-1} implies that for every $(g,\alpha)\in \calH$ and every
$n$ there is $i$ such that $F^g_n\cup F^g_{n+1}\subseteq F^{\bar f}_i\cup F^{\bar f}_{i+1}$. 
By
Lemma~\ref{L.A.1/2} we may fix $\alpha\in \cUN$ such that~$\Psi_\alpha$ is a
representation of $\Phi$ on both $\calD[\vec F^{\bar f,\even}]$ and $\calD[\vec
F^{\bar f,\odd}]$. Lemma~\ref{L.A.4} (b) implies that  for every $(g,\beta)\in
\calH$ we have $\limsup_n \Delta_{E^g_n}(\alpha,\beta)=0$. Fix $\bar k, \bar
m\in \bbN$ for which the set $\calH'$ of all $(g,\beta)\in \calH$ such that
$g^{\bar m+1}(0)=\bar k$ and $\Delta_{E^g_n}(\alpha,\beta)<\e/2$ whenever $n\geq
\bar m$ is uncountable. By the separability of $\cUN$ we can find distinct
$(g,\beta)$ and $(h,\gamma)$ in $\calH'$ such that $g^i(0)=h^i(0)$
for all $i\leq \bar m+1$ and $|\beta(i)-\gamma(i)|<\e/2$  for all $i\leq \bar k$.

Fix $i$ and $j$ such that $J=F^g_i\cap F^h_j\neq \emptyset$. 
We shall prove  $\Delta_J(\beta,\gamma)<\e$. 
Since  $\max_{i<\bar k}|\beta(i)-\gamma(i)|<\e/2$, we may assume that $J\setminus [0,\bar k)\neq \emptyset$ and therefore $J\cap [0,g^{\bar m}(0))=J\cap [0,h^{\bar m}(0))=\emptyset$. 
Find $l$ such that $F^g_i\subseteq F^{\bar f}_l$, and therefore $J\subseteq F^{\bar f}_l$. 
Then 
\[
\Delta_J(\beta,\gamma)\leq \Delta_{F^g_i\cap F^{\bar f}_l}(\beta,\alpha)+
\Delta_{F^h_j\cap F^{\bar f}_l}(\gamma,\alpha)<\e.
\]
Since $i$ and $j$ were arbitrary we conclude that 
  $\{(g,\beta),(h,\gamma)\}\in K^\e_1$ contradicting our assumption on $\calH$. 
\end{proof}

Since $K^\e_0$ is an open partition,  by TA and Claim~\ref{C.A.1},
for each $\e>0$ there is a partition of $\cX$ into countably many
$K^\e_1$-homogeneous sets.

We fix $n$ and 
 let $\e_n=2^{-n}$. Repeatedly using Lemma~\ref{L.NupN},
 find sets
$\cX_0\supseteq \cX_1\supseteq\dots$ and $0=m(0)<m(1)<\dots$ so that (1) each
$\cX_n$ is $K^{\e_n}_1$-homogeneous, (2) each set $\{f\mid (\exists
\alpha)(f,\alpha)\in \cX_n\}$ is $\leq^*$-cofinal  and (3) for all $n$ and
 all $k>m(n)$ there are $j\in\bbN$ and $(f,\alpha)\in
\cX_n$ such that $f^j(0)\leq m(n)$ and $f^{j+1}(0)\geq k$.

In $\cX_n$ pick a sequence $(f_{n,i},\alpha_{n,i})$ and $j(i)$, for $i\in \bbN$, such that
\begin{enumerate}
\setcounter{enumi}3
\item  \lbl{I.new.0}
$(f_{n,i})^{j(i)}(0)\leq m(n)<m(n+i)\leq (f_{n,i})^{j(i)+1}(0)$ for all $i$.  
\pushcounter
\end{enumerate}
By compactness
we may assume  $\alpha_{n,i}$ converge to  $\alpha_n\in \cUN$. 
We claim that 
\begin{enumerate}
\popcounter
\item \lbl{I.new.1} $\Delta_{[m(l),\infty)}(\alpha_k,\alpha_l)\leq \e_k$ whenever $k\leq l$. 
\pushcounter
\end{enumerate}
Assume not and fix $m(l)\leq n_1<n_2$ such that
$\rho(n_1,n_2,\alpha_k,\alpha_l)>\e_k$. By \eqref{I.new.0}, for all large enough $i$ we have
$(f_{k,i})^{j(i)+1}(0)>n_2$ and 
$(f_{l,i})^{j(i)+1}(0)>n_2$. Since $\lim_i\alpha_{k,i}= \alpha_k$ and $\lim_i \alpha_{l,i}=\alpha_l$ 
we have $\rho(n_1,n_2,\alpha_{k,i},\alpha_{l,i})>\e_k$ for large enough $i$. 
These facts imply $\Delta_{F^{f_{k,i}}_{j(i)}\cap F^{f_{l,i}}_{j(i)}}(\alpha_{k,i},\alpha_{l,i})>\e_k$ 
for a large enough~$i$. However, $(f_{k,i},\alpha_{k,i})\in \cX_k$ and $(f_{l,i},\alpha_{l,i})\in\cX_l
\subseteq \cX_k$, contradicting the homogeneity of~$\cX_k$. 

By \eqref{I.new.1} and Lemma~\ref{L.A.5.5} \eqref{L.A.5.5.new},  
for $k<l$ we can fix $z_{k,l}\in\bbT$ such that 
\begin{enumerate}
\popcounter
\item   \lbl{I.new.2} $\sup_{i\geq m(l)} |z_{k,l} \alpha_k(i) -\alpha_l(i)|\leq \e_k$, 
\pushcounter
\end{enumerate}
with $z_{k,k}=1$. 
We claim that 
\begin{enumerate}
\popcounter
\item   \lbl{I.new.3} $|z_{k,l}-z_{k,m}\overline{ z_{l,m}}|\leq 3\e_{\min\{k,l,m\}}$ for all $k,l$ and $m$. 
\pushcounter
\end{enumerate}
For $\beta$ and $\gamma$ in $\cUN$ and $\e>0$, 
in the proof of \eqref{I.new.3} we write $\beta\sim_\e \gamma$ if 
$\sup_{i\geq m(\max\{k,l,m\})}|\beta(i)-\gamma(i)|\leq \e$. 
Letting $\e=\e_{\min\{k,l,m\}}$, 
 by \eqref{I.new.2}  we have
\[
z_{k,l}\alpha_k\sim_\e\alpha_l \sim_\e\overline{ z_{l,m}} \alpha_m\sim_\e \overline{ z_{l,m}}z_{k,m}\alpha_k
\]
and therefore $|z_{k,l}-z_{k,m}\overline{ z_{l,m}}|\leq 3\e$. 

We want to find an infinite $Y\subseteq \bbN$ such that 
for all $i<j$ in $Y$ we have $|1-z_{k(i),k(j)}|\leq 4\e_{k(i)}$. 
To this end, define a coloring  $M_0\cup M_1$ of triples in~$\bbN$  by 
putting a triple $k<l<m$ into $M_0$ if 
\[
|z_{l,m}-1|\leq 4\e_k.
\]
We claim there are no  infinite sets $Y$ such that every triple of elements
from~$Y$ belongs to $M_1$. Assume the contrary. For such $Y$ let $k=\min(Y)$ 
and pick $n\in Y$ such that $Y$ has at least $2\pi/\e_k$ strictly between $k$ and $n$.
Then there are distinct $l$ and $m$ in $Y$ between $k$ and $n$ such that 
$|z_{l,n}-z_{m,n}|\leq \e_k$. Using  \eqref{I.new.3} in the second inequality 
we have 
\[
|z_{l,m}-1|\leq |z_{l,m}-z_{l,n}\overline{ z_{m,n}}|+|1-z_{l,n}\overline{ z_{m,n}}|\leq 4\e_k.
\]
Therefore  there is no infinite $M_1$-homogeneous set of triples.  By
using Ramsey's theorem we can find an infinite $Y\subseteq \bbN$
such that all triples of elements of $Y$ belong to $M_0$. 
Enumerate $Y$ increasingly as $k(i)$, for $i\in \bbN$.
We may assume $k(0)\geq 2$ and therefore $4\e_{k(i)}\leq \e_i$. 
Since $|1-z_{k(i),k(j)}|\leq \e_i$, we have 
\begin{enumerate}
\popcounter
\item \lbl{I.new.10}
$\sup_{l\geq m(k(i))}|\alpha_{k(i)}-\alpha_{k(j)}|\leq\e_i'$  for all $i<j$. 
\pushcounter
\end{enumerate}
Define $\gamma\in \cUN$ by 
$\gamma(l)=\alpha_{k(i)}(l)$ if 
$l\in [m(k(i)),m(k(i+1)))$ and $\gamma(l)=\alpha_{k(0)}(l)$ if $l<m(k(0))$. 
By \eqref{I.new.10}  we have 
\begin{enumerate}
\popcounter
\item \lbl{I.new.4} $|\gamma(l)-\alpha_{k(i)}(l)|\leq \e_i$ for all $i$ and all $l\geq m(k(i))$. 
\pushcounter
\end{enumerate}
We claim that for all $j$   (recall that $F^f_i=[f^i(0),f^{i+1}(0))$) 
\begin{enumerate}
\popcounter
\item \lbl{I.A.7} If $(f,\beta)\in\cX_{k(j)}$ then for all $i$
we have $\Delta_{F^f_i\setminus m_{k(j)}}(\beta,\gamma)\leq
5\e_j$. \pushcounter
\end{enumerate}
Write $n=k(j)$. 
Since $(f_{n,l},\alpha_{n,l})\in \cX_n$, 
 for  $l$  large enough to have 
\[
[(f_{n,l})^{j(l)}(0), (f_{n,l})^{j(l)+1}(0))\supseteq F^f_i\setminus m(n)
\]
 we have 
$\Delta_{F^f_i\setminus m(n)}(\beta,\alpha_{n,l})\leq \e_n$.
 The continuity implies  $\Delta_{F^f_i\setminus m(n)}(\beta,\alpha_n)\leq \e_n$ and    
 \eqref{I.A.7}  implies 
 $\Delta_{F^f_i\setminus m_{k(j)}}(\beta,\gamma)\leq
5\e_j$.

By Lemma~\ref{L.U.1}, it suffices to prove  $\Psi_\gamma$ is a representation
of $\Phi$ on every~$\calD[\vec E]$. Fix $g$ such that $\vec E=\vec E^g$ (with
$E^g_n=[g(n),g(n+1))$). Fix $\beta$ such that $\Psi_\beta$ is a representation
of $\Phi$ on $\calD[\vec E^g]$.  By Lemma~\ref{L.A.4} (b) it suffices to prove
$\Delta_{E^g_n}(\beta,\gamma)\to 0$ as $n\to \infty$. Fix $m\in \bbN$ and
$(f,\alpha)\in \cX_m$ such that $f\geq^* g$. By Lemma~\ref{L.A.4} (b) we have
$\lim_n \Delta_{E^g_n}(\alpha,\beta)\to 0$. By~\eqref{I.A.7} we have $\limsup_n
\Delta_{E^g_n}(\beta,\gamma)\leq \limsup_n \Delta_{F^f_n}(\alpha,\gamma)\leq
5\e_n$. Since $n$ was arbitrary, the conclusion follows.
\end{proof}

The construction in Theorem~\ref{T.1} hinges on the existence of a
nontrivial coherent family of unitaries under CH and
Theorem~\ref{P.1} shows that under TA every coherent family of
unitaries is `uniformized' by a single unitary. There is an analogy
to the effect of CH/TA  on the additivity of the strong homology as
exhibited in \cite{MarPra}/\cite{DoSiVa}  and
\cite[Theorem~8.7]{To:Partition}. In the latter, uniformizing certain
families of functions from subsets of $\bbN$ into $\{0,1\}$ that are
coherent modulo finite plays the key role. For more on such
uniformizations  see \cite[\S\S 2.2--2.4]{Fa:AQ}.

\section{Representations and $\e$-approximations}\lbl{S.Representation}

The present section is a loose collection of results showing that a
sufficiently measurable representation, or an approximation to a
representation, can be further improved in one way or another.

Lemma~\ref{L.trivial} below illustrates how drastically different automorphisms
of the Calkin algebra are from the automorphisms of Boolean algebras
$\cP(\bbN)/{\mathcal I}$. It is directly responsible for the fact that the
Martin's Axiom is not needed in the proof of Theorem~\ref{T.0}. Recall that for
$\calD\subseteq \cB(H)$ we say $\Phi$ is \emph{inner on~$\calD$} if there is an
inner automorphism $\Phi'$ of $\cC(H)$ such that the restrictions of $\Phi$ and
$\Phi'$ to $\pi[\calD]$ coincide.

\begin{lemma}\lbl{L.trivial}  Assume $\calD_1$ and $\calD_2$ are subsets of $\cB(H)$
such that for some partial isometry $u$ we have $u\calD_2u^*\subseteq \calD_1$
and $P=u^*u$ satisfies $P\calD_2 P=\calD_2$.  If $\Phi$ is inner on $\calD_1$,
then it is inner on $\calD_2$.
\end{lemma}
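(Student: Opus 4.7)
I would prove this by constructing an inner automorphism of $\cC(H)$ that agrees with $\Phi$ on $\pi[\calD_2]$ by transporting the one on $\pi[\calD_1]$ through the partial isometry~$u$. First, fix a unitary $V\in \cC(H)$, lifted from a Fredholm operator, such that $\Ad V$ agrees with $\Phi$ on $\pi[\calD_1]$. For any $a\in\calD_2$, the hypothesis $P\calD_2 P=\calD_2$ yields $a=PaP=u^*(uau^*)u$; since $uau^*\in\calD_1$, applying $\Phi$ as a $*$-homomorphism gives
\[
\Phi(\pi(a)) \;=\; \Phi(\pi(u))^*\,\Phi(\pi(uau^*))\,\Phi(\pi(u))
\;=\; \Phi(\pi(u))^*\, V\,\pi(uau^*)\,V^*\, \Phi(\pi(u))
\;=\; W\,\pi(a)\,W^*,
\]
where $W := \Phi(\pi(u))^*\, V\, \pi(u)\in \cC(H)$. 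Hence the conjugation $\Ad W$ implements $\Phi$ on $\pi[\calD_2]$, although $W$ need not itself be a unitary in $\cC(H)$.

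The next step is to replace $W$ by a unitary $\tilde W\in\cC(H)$ effecting the same conjugation on $\pi[\calD_2]$. I would extend the partial isometry $u_1 := \pi(u)$ to a unitary $U_1\in\cC(H)$ by adjoining a partial isometry $S_1$ with $S_1^*S_1 = I - u_1^*u_1$ and $S_1 S_1^* = I - u_1 u_1^*$, so that $U_1 := u_1 + S_1$ is a unitary in $\cC(H)$. Such $S_1$ exists by standard Murray--von Neumann calculus in the properly infinite Calkin algebra, where two projections are equivalent as soon as they agree in the zero/nonzero dichotomy (the case of principal interest, $u$ a projection, makes this adjunction immediate). Setting $U_2 := \Phi(U_1)$---automatically a unitary extending $u_2 := \Phi(u_1)$ since $\Phi$ is a $*$-isomorphism---and $\tilde W := U_2^*\, V\, U_1$, we obtain a product of three unitaries in $\cC(H)$, hence a unitary.

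Finally, verify $\tilde W\pi(a)\tilde W^* = \Phi(\pi(a))$ for $a\in\calD_2$ by a telescoping computation: writing $p_1 := \pi(P) = u_1^*u_1$ and $p_2 := \Phi(p_1)$, the relation $p_1\pi(a)p_1 = \pi(a)$ makes the $S_1$-pieces annihilate under conjugation (as $S_1 p_1 = 0$), giving $U_1\pi(a)U_1^* = u_1\pi(a)u_1^* = \pi(uau^*)$; conjugation by $V$ yields $\Phi(\pi(uau^*)) = u_2\Phi(\pi(a))u_2^*$ by the first step; and the $U_2$-conjugation is absorbed using $U_2^* u_2 = p_2 = u_2^*U_2$ together with $\Phi(\pi(a)) = p_2\Phi(\pi(a))p_2$, returning $\Phi(\pi(a))$. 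Thus $\Ad\tilde W$ is an inner automorphism of $\cC(H)$ agreeing with $\Phi$ on $\pi[\calD_2]$. The main technical point is the unitary extension of $u_1$ in the Calkin algebra; this is trivial in the symmetric case $u^*u=uu^*$ (notably when $u$ is a projection, as in the main application to Lemma~\ref{L.trivial.2}), and more generally rests on the purely infinite structure of $\cC(H)$.
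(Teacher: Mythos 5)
Your first step is exactly the paper's argument: the witness you produce, $W=\Phi(\pi(u))^*V\pi(u)$, is precisely the class of the paper's $w^*vu$, and the identity $\Phi(\pi(a))=W\pi(a)W^*$ for $a\in\calD_2$ is derived the same way. The problem is the second half, where you try to upgrade $W$ to a unitary by first extending $u_1=\pi(u)$ to a unitary $U_1=u_1+S_1$ of $\cC(H)$. The partial isometry $S_1$ you need, with $S_1^*S_1=I-u_1^*u_1$ and $S_1S_1^*=I-u_1u_1^*$, exists only when these two defect projections are Murray--von Neumann equivalent in $\cC(H)$, i.e.\ only when they are both zero or both nonzero. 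Nothing in the hypotheses guarantees this, and it fails exactly in the situation the lemma is designed for: in Lemma~\ref{L.trivial.2} one takes $u$ to be an isometry ($u^*u=I$, so $P=I$) whose range is the span of $\{e_i\mid i\in E_{m_j},\ j\in\bbN\}$, a subspace of infinite codimension. Then $I-u_1^*u_1=0$ while $I-u_1u_1^*=\pi(I-uu^*)\neq 0$, so $\pi(u)$ is a proper isometry of the Calkin algebra and admits \emph{no} unitary extension at all (any extension of $u_1$ along $p_1=I$ is $u_1$ itself). Your parenthetical that ``the case of principal interest'' is $u$ a projection misreads the application; there $u$ is a non-unitary isometry, which is precisely the case your construction cannot handle.

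The repair is also instructive, because it shows the detour through $U_1$ is misdirected: one should correct $W$, not $\pi(u)$. In the situations where the lemma is applied one has $P\in\calD_2$, hence $uu^*=uPu^*\in\calD_1$, and a direct computation gives $W^*W=\pi(P)$ and $WW^*=\Phi(\pi(P))$; since $\Phi$ is an automorphism these two projections are simultaneously $I$, or simultaneously proper and nonzero, so $W$ extends to a unitary of $\cC(H)$ acting the same way on $\pi[\calD_2]$ (because $\pi(a)=\pi(P)\pi(a)\pi(P)$ for $a\in\calD_2$). In the main application $P=I$, so $W$ is already a unitary and no extension is needed — which is why the paper can simply declare $w^*vu$ to be the witness. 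As written, however, your proof has a genuine gap at the construction of $S_1$, and the telescoping computation that follows depends on the nonexistent $U_1$.
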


\begin{proof}
Fix $v$ such that $a\mapsto vav^*$ is a representation of $\Phi$ on $\calD_1$
and $w$ such that $\pi(w)=\Phi(\pi(u))$. If $b\in \calD_2$ then $ubu^*\in
\calD_1$ and $u^*ubu^*u=b$. If $\Psi$ is any representation of $\Phi$, then we
have (writing $c\sim_{\cK}d$ for `$c-d$ is compact')
$$
\Psi(b)\sim_{\cK} \Psi(u^*)\Psi(ubu^*)\Psi(u)\sim_{\cK}w^* vub u^* v^* w.
$$
Therefore $w^*vu$ witnesses $\Phi$ is inner on $\calD_2$.
\end{proof}

An analogue of Lemma~\ref{L.trivial} fails for automorphisms of
$\cP(\bbN)/\Fin$. For example,  in \cite{ShSte:Nontrivial} (see also
\cite{Ste:Size}) it was proved that a weakening of the Continuum Hypothesis
implies the existence of a nontrivial automorphism whose ideal of trivialities
is a maximal ideal.

\subsection{$\e$-approximations}\lbl{S.epsilon}
Assume $\cA$ and $\cB$ are C*-algebras,  $J_1$ and $J_2$ are their ideals,
$\Phi\colon \cA/J_1\to \cB/J_2$ is a
*-homomorphism, and $\cX\subseteq \cA$.
 A map $\Theta$ whose domain contains $\cX$ and is contained in $\cA$
 and whose range is contained in $\cB$ is an
 \emph{$\e$-approximation to $\Phi$ on $\cX$} if for all $a\in \cX$
 we have $\|\Phi(\pi_{J_1}(a))-\pi_{J_2}(\Theta(a))\|\leq
 \e$ for all $a\in \cX$.
 If $\cX=\cA$ we say $\Theta$ is an \emph{$\e$-approximation to
 $\Phi$}.

\begin{lemma} \lbl{L.L.1} Assume $\cA$ and $\cB$ are C*-subalgebras
of $\cB(H)$ containing $\cK(H)$ and $\Phi$ is a
*-homomorphism from $\cA/\cK(H)$ into $\cB/\cK(H)$. Then we
have the following.
\begin{enumerate}
\item $\Phi$ has a Borel-measurable representation if and only if
it has a Borel-measurable representation on $\cU(\cA)$.
\item If $\Phi$ has a Borel-measurable $\e$-approximation
on $\cU(\cA)$ then it has a Borel-measurable
$4\e$-approximation on $\cA_{\leq 1}$.
\end{enumerate}
\end{lemma}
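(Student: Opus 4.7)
The plan is to exhibit every $a\in\cA_{\leq 1}$ as a fixed, Borel-measurable linear combination of four elements of $\cU(\cA)$ via the continuous functional calculus, and then to transport $\Psi_0$ (resp.\ $\Theta_0$) through this combination. The forward direction of (1) is trivial by restriction, so both halves of the lemma reduce to the same extension construction. Concretely, given $a\in\cA_{\leq 1}$ I split $a=b+ic$ with $b=(a+a^*)/2$ and $c=(a-a^*)/(2i)$ self-adjoint of norm $\leq 1$; the adjoint is WOT-continuous and hence Borel with respect to the SOT Borel structure on $\cB(H)_{\leq 1}$, so this splitting is Borel. For self-adjoint $d\in\cA_{\leq 1}$ I then set $u_\pm(d)=d\pm i\sqrt{I-d^2}\in\cU(\cA)$, so that $(u_+(d)+u_-(d))/2=d$ and
\[
a=\tfrac12\bigl(u_+(b)+u_-(b)+iu_+(c)+iu_-(c)\bigr).
\]

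The main technical point is that $d\mapsto\sqrt{I-d^2}$ is SOT-continuous on the self-adjoint part of $\cA_{\leq 1}$. I would argue that polynomial functional calculus $d\mapsto p(d)$ is jointly SOT-continuous on norm-bounded sets (routine), and then approximate $\sqrt{1-x^2}$ uniformly on $[-1,1]$ by polynomials, so that $d\mapsto\sqrt{I-d^2}$ is a norm-uniform limit of SOT-continuous maps and hence SOT-continuous. This is the only nonroutine step in the argument.

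For (1), I define $\Psi$ on $\cA_{\leq 1}$ by applying $\Psi_0$ termwise to the four unitaries: $\Psi(a)=\tfrac12(\Psi_0(u_+(b))+\Psi_0(u_-(b))+i\Psi_0(u_+(c))+i\Psi_0(u_-(c)))$. This is Borel by composition, and since $\pi\Psi_0=\Phi\pi$ on $\cU(\cA)$ and $\Phi$ is linear on the quotient, $\pi(\Psi(a))=\Phi(\pi(a))$. To extend $\Psi$ to all of $\cA$ I scale: set $\Psi(a)=n(a)\Psi(a/n(a))$ with $n(a)=\max(1,\lceil\|a\|\rceil)$; the operator norm is lower semicontinuous and hence Borel in SOT, so $\Psi$ remains Borel.

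For (2), define $\Theta$ on $\cA_{\leq 1}$ by the same formula with $\Theta_0$ in place of $\Psi_0$. The triangle inequality yields
\[
\|\Phi(\pi(a))-\pi(\Theta(a))\|\leq\tfrac12\sum_{j=1}^4\|\Phi(\pi(u_j))-\pi(\Theta_0(u_j))\|\leq 2\e,
\]
well within the claimed bound $4\e$, and no further obstacle arises.
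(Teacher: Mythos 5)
Your proposal is correct and follows essentially the same route as the paper: write each $a\in\cA_{\leq 1}$ as a fixed linear combination of four unitaries obtained from the self-adjoint parts via $d\mapsto d\pm i\sqrt{I-d^2}$, apply the given map on $\cU(\cA)$ termwise, and extend by scaling with the (Borel) norm function. Your extra verification that the square-root functional calculus is SOT-continuous on the self-adjoint unit ball and that the adjoint is Borel for the strong-operator Borel structure just fills in measurability details the paper leaves implicit, and your bound $2\e$ is even slightly better than the stated $4\e$.
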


\begin{proof} (1) We only need to prove the reverse implication.
There  are norm-continuous maps $\gamma_i\colon \cA_{\leq 1}\to \cU(\cA)$ for
$i<4$ such that $a=\sum_{i<4} \gamma_i(a)$ for all $a\in \cA$. This is because
if $a\in \cA$ then $b=(a+a^*)/2$ and $c=i(a-a^*)/2$ are self-adjoint of norm
$\leq \|a\|$ such that $b+ic=a$. If $b$ is self-adjoint of norm $\leq 1$, then
the operators $b_1=b+i\sqrt {I-b^2}$ and $b_2=b-i\sqrt {I-b^2}$ have norm $\leq
1$ and their product is equal to $I$. Therefore they are both unitaries. Also,
their mean is equal to $b$. It is now clear how to define~$\gamma_i$. Assume
$\Psi$ is a representation of $\Phi$ on $\cU(\cA)$. Then let $\Psi_1(0)=0$ and
$\Psi_1(a)=\|a\|\sum_{i<4} \Psi(\gamma_i(a/\|a\|))$ for $a\neq 0$. This is the
required Borel representation. The proof of (2) uses the same formula and the
obvious estimates.
\end{proof}

\begin{lemma} \lbl{L.guess} Let $\calD$ be
a von Neumann subalgebra of $\cB(H)$ and $\Phi\colon
\calD/(\cK(H)\cap \calD)\to \cB(H)/\cK(H)$ a *-homomorphism.

\begin{enumerate}
\item \lbl{L.guess.1} If $\Phi$ has a
 C-measurable $\e$-approximation $\Psi$ on $\cU(\calD)$ then it has a
Borel-measurable  $8\e$-ap\-pro\-xi\-ma\-ti\-on on
$\cU(\calD)$.

\item \lbl{L.guess.1.5}If $\Phi$ has a
 C-measurable representation  on $\cU(\calD)$ then it has a
Borel-mea\-su\-ra\-ble  representation on $\calD$.

\item \lbl{L.guess.2} If there are C-measurable maps $\Psi_i$ for $i\in \bbN$
whose graphs cover an $\e$-approximation to~$\Phi$ on $\calD_{\leq
1}$ then there are Borel-measurable maps $\Psi'_i$ for $i\in \bbN$
whose graphs cover an $8\e$-approximation to~$\Phi$ on $\calD_{\leq
1}$.
\end{enumerate}
\end{lemma}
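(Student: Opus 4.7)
The plan for all three parts rests on the same mechanism. Both $\cU(\calD)$ (a Polish group in the strong operator topology) and $\calD_{\leq 1}$ are Polish spaces, so every C-measurable map from either into $\cB(H)$ is Baire-measurable, and hence continuous---in particular Borel---on a dense $G_\delta$ subset of its domain. The task in each case is to convert this partial Borelness into Borelness on the whole domain by a ``translate-and-uniformize'' device using Theorem~\ref{T.U.Large}, possibly followed by the norm-continuous decomposition from the proof of Lemma~\ref{L.L.1}.

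For (1) I would let $G\subseteq\cU(\calD)$ be the continuity $G_\delta$ of $\Psi$. For each $u\in\cU(\calD)$ the set $G\cap uG^{-1}=\{v\in G:v^{-1}u\in G\}$ is the intersection of two comeager sets, hence comeager, and the corresponding set $B\subseteq\cU(\calD)^2$ is Borel. Theorem~\ref{T.U.Large} supplies a Borel uniformizer $u\mapsto v(u)$, and I would put $\Psi'(u)=\Psi(v(u))\,\Psi(v(u)^{-1}u)$. Since both factors lie in $G$ where $\Psi$ is Borel, $\Psi'$ is Borel, and the Calkin-norm estimate
\[
\|\pi\Psi'(u)-\Phi\pi(u)\|\leq\|\pi\Psi(v(u))-\Phi\pi(v(u))\|\cdot\|\pi\Psi(v(u)^{-1}u)\|+\|\Phi\pi(v(u))\|\cdot\|\pi\Psi(v(u)^{-1}u)-\Phi\pi(v(u)^{-1}u)\|,
\]
together with $\|\pi\Psi(\cdot)\|\leq 1+\e$ on unitaries, gives the bound $2\e+\e^2$, well within $8\e$ (the trivial large-$\e$ case being handled by $\Psi'\equiv 0$, whose error is at most $\|\Phi\pi(u)\|\leq 1\leq 8\e$ as soon as $\e\geq 1/8$). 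For (2), the very same construction applied to an exact representation yields an exact representation, because $\pi\Psi=\Phi\pi$ together with the multiplicativity of $\Phi$ give $\pi\Psi'(u)=\Phi\pi(v(u))\Phi\pi(v(u)^{-1}u)=\Phi\pi(u)$; Lemma~\ref{L.L.1}(1) then extends this Borel representation on $\cU(\calD)$ to a Borel representation on all of $\calD$ via the four unitary-valued selectors $\gamma_0,\dots,\gamma_3$.

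For (3) I would first restrict each $\Psi_i$ to $\cU(\calD)$ and run the translation trick with the index pair as an extra variable: letting $G_i$ be the continuity $G_\delta$ of $\Psi_i$, uniformize the Borel set $\{(u,v,i,j)\in\cU(\calD)^2\times\bbN^2:v\in G_i,\ v^{-1}u\in G_j\}$, whose vertical sections over $u$ are comeager (already $G_0\cap uG_0^{-1}$ witnesses this), to obtain a Borel choice $u\mapsto(v(u),i(u),j(u))$. The countable collection of Borel maps $u\mapsto\Psi_i(v(u))\Psi_j(v(u)^{-1}u)$, indexed by $(i,j)\in\bbN^2$, then covers a $2\e$-approximation on $\cU(\calD)$: for each $u$ the covering hypothesis furnishes indices making $\Psi_i(v(u))$ and $\Psi_j(v(u)^{-1}u)$ genuine $\e$-approximations. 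Finally the $\gamma$-decomposition of Lemma~\ref{L.L.1}(2) extends each such Borel map from $\cU(\calD)$ to $\calD_{\leq 1}$ at the cost of a factor~$4$, yielding countably many Borel maps on $\calD_{\leq 1}$ whose graphs cover an $8\e$-approximation.

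The main obstacle is the bookkeeping in (3): the \emph{measurability} step (selecting $v(u)$ so that $v(u)$ and $v(u)^{-1}u$ both land in some $G_i$ and $G_j$, which requires only comeagerness) must be cleanly separated from the \emph{covering} step (arguing that for each $u$ some pair of indices $(i,j)$ makes both values $\e$-approximations, which uses the covering hypothesis). Enumerating all pairs $(i,j)$ and treating each as a separate Borel map, as above, decouples these two issues and avoids any need for a measurable selection of the indices themselves.
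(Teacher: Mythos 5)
Parts (1) and (2) of your proposal are correct and essentially the paper's own argument: the paper likewise writes each unitary as a product of two elements of a single dense $G_\delta$ of continuity points, using a Borel uniformization (via Theorem~\ref{T.U.Large}) of the comeager-section set $\{(a,b)\mid b\in\cX\cap a^*\cX\}$ and the formula $\Psi'(a)=\Psi(ah(a))\Psi(h(a))^*$, which differs from your $\Psi(v(u))\Psi(v(u)^{-1}u)$ only cosmetically; it also obtains (2) from the case $\e=0$ together with Lemma~\ref{L.L.1}(1). Your constant bookkeeping ($2\e+\e^2\leq 8\e$, with the zero map handling large $\e$) is fine.

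In (3), however, there is a genuine gap in the measurability step. Because you uniformize $\{(u,v,i,j)\mid v\in G_i,\ v^{-1}u\in G_j\}$, the selected $v(u)$ is only guaranteed to lie in $G_{i(u)}$, and $v(u)^{-1}u$ only in $G_{j(u)}$, where the pair $(i(u),j(u))$ varies with $u$. For a \emph{fixed} pair $(i,j)$ the map $u\mapsto\Psi_i(v(u))\Psi_j(v(u)^{-1}u)$ is then only C-measurable (a C-measurable map composed with a Borel one), not Borel: $\Psi_i$ is continuous only on $G_i$, and nothing forces $v(u)\in G_i$. Since producing genuinely Borel maps is the entire content of the lemma, the step as written does not deliver the conclusion; nor can you repair it by redefining the map to be $0$ off the Borel set $\{u\mid v(u)\in G_i,\ v(u)^{-1}u\in G_j\}$, because the indices furnished by the covering hypothesis at $v(u)$ and $v(u)^{-1}u$ (good approximation) are unrelated to the indices where continuity holds. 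The fix is exactly what the paper does: drop the index variables, set $G=\bigcap_i G_i$ (still a dense $G_\delta$, on which all $\Psi_i$ are simultaneously continuous), and choose $v(u)$ with both $v(u)$ and $v(u)^{-1}u$ in $G$; then every map $u\mapsto\Psi_i(v(u))\Psi_j(v(u)^{-1}u)$, $(i,j)\in\bbN^2$, is Borel, your covering argument goes through verbatim, and the passage to $\calD_{\leq 1}$ (where you should allow different index pairs for the four unitaries of the decomposition, so the final family is indexed by quadruples of pairs, still countable) gives the $8\e$ bound.
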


\begin{proof} \eqref{L.guess.1} Consider $\cU(\calD)$ with respect to the strong operator
topology. It is  a Polish group. Since $\Psi$ is Baire-measurable we may fix a
dense $G_\delta$ subset $\cX$ of $\cU(\calD)$ on which $\Psi$ is continuous.
The set 
\[
\cY=\{(a,b)\in \cU(\calD)^2\mid b\in \cX\cap a^*\cX\}\}
\]
 is Borel and it
has comeager sections. By Theorem~\ref{T.U.Large} there is a Borel
uniformization $h$ for $\cY$. Then for each $a$ both $h(a)$ and $ah(a)$ belong to $\cX$ and therefore
$\Psi'(a)=\Psi(ah(a))\Psi(h(a))^*$ is a
Borel-measurable $2\e$-approximation to $\Phi$ on $\cU(\calD)$. By
Lemma~\ref{L.L.1}, $\Phi$ has an $8\e$-approximation.

\eqref{L.guess.1.5} follows from the case $\e=0$ of \eqref{L.guess.1} plus
Lemma~\ref{L.L.1}(1).

To prove \eqref{L.guess.2}, find a dense $G_\delta$ subset $\cX$ of
$\cU(\calD)$ on which each $\Psi_i$ is continuous. Define $\cY$ and $h$ as
above and consider the maps $\Psi_{ij}'(a)=\Psi_i(ah(a))\Psi_j(h(a))^*$.
\end{proof}

\begin{lemma} \lbl{L.n}
Let $\calD\subseteq \cB(H)$ be  a von Neumann algebra, $\Phi\colon
\calD/(\cK(H)\cap \calD)\to \cB(H)/\cK(H)$ a
*-homomorphism and $\cY\subseteq \calD_{\leq 1}$.  Assume $\Phi$ has a Borel-measurable
$2^{-n}$-approximation $\Xi_n$ on $\cY$ for every $n\in \bbN$. Then $\Phi$ has
a C-measurable representation on $\cY$.
\end{lemma}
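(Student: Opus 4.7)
The plan is to produce $\Psi$ as a norm-limit of successively better lifts $\Psi_n$ of $\Xi_n$, where each $\Psi_n$ is obtained by adjusting $\Xi_{n+1}$ by a compact operator chosen via Jankov--von Neumann uniformization (Theorem~\ref{T.JvN}) so that consecutive $\Psi_n$'s differ by at most a summable amount in operator norm.

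First, I would normalize the data. Since $\cY\subseteq \calD_{\leq 1}$, we have $\|\pi(\Xi_n(a))\|\leq 1+2^{-n}$, so the Borel set
\[
B_n=\{(a,K)\in \cY\times (\cK(H)\cap \cB(H)_{\leq 2})\mid \|\Xi_n(a)-K\|\leq 2\}
\]
has nonempty vertical sections (Borelness of $\cK(H)\cap \cB(H)_{\leq 2}$ uses Lemma~\ref{L.T.3}, and $(x,K)\mapsto \|x-K\|$ is SOT lower-semicontinuous on bounded sets, so the norm inequality defines a Borel set). By Theorem~\ref{T.JvN} there is a C-measurable uniformization; subtracting off the chosen compact, I may assume each $\Xi_n$ is C-measurable into $\cB(H)_{\leq 2}$.

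Next, I build C-measurable maps $\Psi_n\colon \cY\to \cB(H)$ with $\pi(\Psi_n(a))=\pi(\Xi_n(a))$ and $\|\Psi_n(a)-\Psi_{n+1}(a)\|\leq 2^{-n+2}$ recursively. Put $\Psi_0=\Xi_0$. Assuming $\Psi_n$ has been defined and takes values in a fixed bounded ball, observe that
\[
\|\Psi_n(a)-\Xi_{n+1}(a)\|_{\cK}=\|\pi(\Xi_n(a))-\pi(\Xi_{n+1}(a))\|\leq 2^{-n}+2^{-n-1},
\]
so the set
\[
A_n=\{(a,K)\in \cY\times (\cK(H)\cap \cB(H)_{\leq C})\mid \|\Psi_n(a)-\Xi_{n+1}(a)-K\|\leq 2^{-n+2}\}
\]
has nonempty vertical sections for a suitably large constant $C$, and it is Borel by the same reasoning as above (using that the C-measurable $\Psi_n$ and $\Xi_{n+1}$ pull Borel sets back to C-measurable ones). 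Theorem~\ref{T.JvN} yields a C-measurable selector $K_n(a)$; set $\Psi_{n+1}(a)=\Xi_{n+1}(a)+K_n(a)$. A telescoping estimate shows $\|\Psi_n(a)\|$ is uniformly bounded, so all the $\Psi_n$ factor through a single closed ball in $\cB(H)$, which is Polish in the strong operator topology.

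Finally, since $(\Psi_n(a))_n$ is norm-Cauchy with $\sum_n 2^{-n+2}<\infty$, the limit $\Psi(a)=\lim_n \Psi_n(a)$ exists in norm and hence also in SOT. As a pointwise SOT-limit of C-measurable maps into a Polish space, $\Psi$ is C-measurable, and by norm-continuity of $\pi$ we have $\pi(\Psi(a))=\lim_n \pi(\Psi_n(a))=\lim_n \pi(\Xi_n(a))=\Phi(\pi(a))$, so $\Psi$ is the required representation on $\cY$. The only delicate point in the argument is the verification that the sets $A_n$ really are Borel in the SOT structure (which reduces to the SOT-lower-semicontinuity of the norm on bounded sets and the Borelness result of Lemma~\ref{L.T.3}); once that is in hand, everything else is bookkeeping with geometric estimates and Theorem~\ref{T.JvN}.
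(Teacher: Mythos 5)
Your idea of producing $\Psi$ as a norm-Cauchy limit of corrected lifts is reasonable, but the way you implement it differs from the paper and has two real gaps.

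\textbf{Gap 1: the normalization step.} You claim the set $B_n=\{(a,K)\in \cY\times (\cK(H)\cap \cB(H)_{\leq 2})\mid \|\Xi_n(a)-K\|\leq 2\}$ has nonempty vertical sections because $\|\pi(\Xi_n(a))\|\leq 1+2^{-n}$. But the hypothesis controls only the Calkin norm $\|\Xi_n(a)\|_\cK$, not the operator norm $\|\Xi_n(a)\|$, which may be arbitrarily large. If $\|\Xi_n(a)\|>4$ then any $K$ with $\|\Xi_n(a)-K\|\leq 2$ necessarily has $\|K\|>2$, and the vertical section is empty. The same objection applies later to $A_n$: your bound on $\|K\|$ uses $\|\Psi_n(a)\|$ and $\|\Xi_{n+1}(a)\|$, and boundedness of $\Xi_{n+1}$ was never established. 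The fix is to search directly for a \emph{bounded lift} $b\in\cB(H)_{\leq 1}$ of $\Phi(\pi(a))$ (which exists because $\Phi$ is contractive) rather than for a bounded compact correction of $\Xi_n(a)$; this is exactly what the paper does.

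\textbf{Gap 2: iterated Jankov--von Neumann.} After one application of Theorem~\ref{T.JvN}, the selector $K_0$ and hence $\Psi_1$ are only C-measurable, not Borel. Consequently $A_1$ is only C-measurable (you note this yourself in the parenthetical, but then write ``it is Borel''). Theorem~\ref{T.JvN} requires the set being uniformized to be \emph{analytic}; a C-measurable set need not be analytic (C-measurable sets lie in $\sigma(\Sigma^1_1)$, which strictly contains the analytic sets), so the recursive application of Jankov--von Neumann breaks down after the first step. One cannot simply iterate JvN while preserving the hypotheses needed to apply it. The paper avoids this entirely by performing a \emph{single} uniformization: it sets $\cX=\{(a,b)\in \calD_{\leq 1}\times \cB(H)_{\leq 1}\mid (\forall n)\ \|\Xi_n(a)-b\|_{\cK}\leq 2^{-n+1}\}$, which is Borel because every $\Xi_n$ is Borel; the vertical sections are nonempty because a contractive lift $b$ of $\Phi(\pi(a))$ automatically satisfies all the conditions, and any $b$ in a vertical section automatically lifts $\Phi(\pi(a))$. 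One application of Theorem~\ref{T.JvN} then finishes. If you want to keep your successive-corrections picture, you would need to package the entire sequence $(K_n)_n$ into one uniformization problem over $\cY\times(\cK(H)\cap\cB(H)_{\leq C})^{\bbN}$, so that Jankov--von Neumann is invoked only once on a Borel set.
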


\begin{proof} Let $\cX=\{(a,b)\in
\calD_{\leq 1}\times \cB(H)_{\leq 1}\mid (\forall n) \|\Xi_n(a)-b\|_{\cK}\leq
2^{-n+1}\}$. By Lemma~\ref{L.T.3}, this is a Borel set. If $\Psi$ is a
C-measurable uniformization of $\cX$ provided by Theorem~\ref{T.JvN}, then
$\Psi$ is a representation of $\Phi$ on $\cY$.
\end{proof}

\section{Approximate *-homomorphisms}
\lbl{S.approximate}

 Assume $\cA$ and $\cB$ are C*-algebras.
 A map $\Lambda\colon \cA\to \cB$ in an \emph{$\e$-approximate *-homomorphism}
 if for all $a,b$ in $\cA_{\leq 1}$ we have the following.
 \begin{enumerate}
 \item $\|\Lambda(ab)-\Lambda(a)\Lambda(b)\|<\e$,
 \item $\|\Lambda(a+b)-\Lambda(a)-\Lambda(b)\|<\e$,
 \item $\|\Lambda(a^*)-\Lambda(a)^*\|<\e$,
 \item $|\|a\|-\|\Lambda(a)\||<\e$.
 \end{enumerate}
 We say $\Lambda$ is \emph{unital} if both $\cA$ and $\cB$ are unital and
 $\Lambda(I)=I$.
 We say $\Lambda$ is \emph{$\delta$-approximated} by $\Theta$ if
 $\|\Lambda(a)-\Theta(a)\|<\delta$ for all $a\in \cA_{\leq 1}$.
Theorem~\ref{T.approximate} is the main result of this section and may be of
independent interest. The numerical value of the constant $K$ is irrelevant for
our purposes and we shall make no attempt to provide
a sharp bound.

A shorter proof of Theorem~\ref{T.approximate} can be given by using a special
case of a result of Sakai (\cite{Sak:On}). After applying the
Grove--Karcher--Roh/Kazhdan result on $\e$-representations to obtain a
representation $\Theta$ of $\Lambda\rs \cU(\cA)$ that is a norm-continuous
group homomorphism, use \cite{Sak:On} to extend $\Theta$ to a *-homomorphism or
a conjugate
*-homomorphism of $\cA$ into $\cB$. An argument included in the proof below shows that this extension has to be a *-homomorphism.
Parts of the proof of Theorem~\ref{T.approximate} resemble parts of Sakai's
proof, of which I was not aware at the time of preparing this manuscript.

 \begin{thm}\lbl{T.approximate} There is a universal constant $K<\infty$
 such that the following holds.
 If $\e<1/1000$, $\cA$ is a  finite-dimensional C$^*$-algebra, $m\in \bbN$,  and
 $\Lambda\colon \cA\to M_m(\bbC)$ is a Borel-measurable unital $\e$-approximate
 *-homomorphism,
 then $\Lambda$ can be $K\e$-approximated by a unital *-ho\-mo\-mor\-phi\-sm.
 \end{thm}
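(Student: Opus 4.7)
The plan follows the two-step strategy indicated immediately before the statement: first build a norm-continuous group homomorphism between unitary groups that is uniformly close to $\Lambda\rs\cU(\cA)$, then extend linearly to an algebra *-homomorphism whose error against $\Lambda$ on the unit ball is controlled.

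\textbf{Step one.} Since $\cA$ is finite-dimensional, it is a finite direct sum of full matrix algebras, so $G:=\cU(\cA)$ is a compact Lie group. The restriction $\Lambda\rs G$ is Borel-measurable and satisfies $\|\Lambda(uv)-\Lambda(u)\Lambda(v)\|<\e$, $\Lambda(I)=I$, and $\|\Lambda(u)^*\Lambda(u)-I\|<3\e$ for all $u,v\in G$. Passing to the unitary part of the polar decomposition of $\Lambda(u)$ (which is Borel in $u$ since $\Lambda(u)$ is $O(\e)$-close to invertible for $\e<1/1000$) at a cost of $O(\e)$, we may assume $\Lambda$ sends $G$ into $\cU(M_m(\bbC))$. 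I then apply the Grove--Karcher--Roh/Kazhdan theorem on $\e$-representations of compact groups to the resulting Borel-measurable almost-homomorphism $\Lambda\rs G\to\cU(M_m(\bbC))$ to obtain a norm-continuous group homomorphism $\Theta_0\colon G\to\cU(M_m(\bbC))$ with $\sup_{u\in G}\|\Lambda(u)-\Theta_0(u)\|\leq C_1\e$.

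\textbf{Step two.} Apply Sakai's extension theorem from \cite{Sak:On} to $\Theta_0$ to produce either a unital *-homomorphism or a unital conjugate *-antihomomorphism $\Theta\colon\cA\to M_m(\bbC)$ restricting to $\Theta_0$ on $G$. Every $a\in\cA_{\leq 1}$ admits a representation $a=\sum_{j<4}\lambda_j u_j$ with $u_j\in G$ and $\sum|\lambda_j|\leq 4$: write $a=\tfrac12(a+a^*)+\tfrac12(a-a^*)$ and decompose each self-adjoint contraction as the mean of the two unitaries $b\pm i\sqrt{I-b^2}$. Iterated $\e$-additivity of $\Lambda$ therefore forces any additive extension of $\Theta_0$ to lie within $O(\e)$ of $\Lambda$ in operator norm on $\cA_{\leq 1}$. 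Combined with $\|\Lambda(a^*)-\Lambda(a)^*\|<\e$, this rules out the conjugate-linear alternative in Sakai's dichotomy: a conjugate-linear extension would disagree with the $\bbC$-linear map $\Lambda$ on pairs $(a,ia)$ by an amount of order $\|a\|$, not $O(\e)$. Hence $\Theta$ is an honest unital *-homomorphism.

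\textbf{Step three.} For $a=\sum_{j<4}\lambda_j u_j$ as above we bound
\[
\|\Theta(a)-\Lambda(a)\|\;\leq\;\sum_{j<4}|\lambda_j|\,\|\Theta_0(u_j)-\Lambda(u_j)\|+\Big\|\sum_{j<4}\lambda_j\Lambda(u_j)-\Lambda(a)\Big\|\;\leq\;4C_1\e+C_2\e,
\]
where the first term is controlled by Step one and the second by iterated $\e$-additivity of $\Lambda$ with an absolute constant $C_2$. Setting $K:=4C_1+C_2$ (and absorbing the $O(\e)$ perturbation from Step one) completes the proof.

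The principal difficulty is Step two: one must show that the group-theoretic extension produced by Sakai is algebraically correct on $\cA$, is $\bbC$-linear (ruling out conjugate *-antihomomorphisms), and is $O(\e)$-close to $\Lambda$ in operator norm. All three items reduce to the quantitative $\e$-linearity and $\e$-self-adjointness of $\Lambda$, which transfer to $\Theta_0$ via the $C_1\e$ estimate from Step one. A smaller technical point in Step one is verifying that truncation of $\Lambda$ to unitaries via polar decomposition preserves Borel measurability; this is routine from functional calculus on the open set of $O(\e)$-invertible operators.
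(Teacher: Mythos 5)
Your proof is precisely the ``shorter proof'' the paper sketches in the paragraph immediately preceding Theorem~\ref{T.approximate} and then declines to carry out. After the common first step (Grove--Karcher--Roh/Kazhdan applied to a Borel unitary-valued rounding of $\Lambda\rs\cU(\cA)$; you do this by polar decomposition, the paper by Lemma~\ref{L.approximate.unitary} plus Jankov--von Neumann, which is cosmetic), the paper avoids Sakai's theorem entirely. It instead defines $\rho$ on self-adjoints via Stone's theorem, proves that $\Theta$ sends projections to projections, preserves Murray--von Neumann equivalence and the normalized trace on each full matrix summand (Claims~\ref{C.eir}--\ref{C.trace}), and then runs Dye's trace argument to see that $\Upsilon(\sum_j\lambda_j u_j)=\sum_j\lambda_j\Theta(u_j)$ is well-defined and a *-homomorphism. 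The two routes meet exactly at the point you single out as the main difficulty---ruling out the conjugate alternative---where the paper's Claim~\ref{C.eir} (showing $\Theta(e^{ir}I)=e^{ir}I$) does the job your Step two does. Your route is shorter but imports \cite{Sak:On}, whose stated hypotheses concern isomorphisms of unitary groups of $AW^*$-algebras; applying it to a possibly non-injective, non-surjective $\Theta_0\colon\cU(\cA)\to\cU(M_m(\bbC))$ deserves a sentence of justification. The paper's route buys self-containment at the cost of length.

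One caveat applies equally to your write-up and to the paper's: the argument that excludes the conjugate-linear option needs $\Lambda(iI)\approx iI$. You appeal to this by calling $\Lambda$ ``$\bbC$-linear''; the paper appeals to it in Claim~\ref{C.eir} through the inequality $\|\Lambda(e^{ir}I)-e^{ir}I\|\leq\e$. Neither assertion follows from the four clauses in the paper's definition of an $\e$-approximate *-homomorphism: the map $z\mapsto\bar z$ on $\cA=\bbC$ satisfies all four with $\e=0$, is Borel, unital, yet sits at distance $2$ from the unique unital *-homomorphism of $\bbC$. What is really used is an implicit fifth clause $\|\Lambda(\lambda a)-\lambda\Lambda(a)\|<\e$ for $|\lambda|\leq 1$ (automatically available in the application inside Theorem~\ref{T.2}, where the $\Lambda_j''$ come from a genuinely $\bbC$-linear $\Psi_1$). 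Since you invoke approximate $\bbC$-linearity just as the paper does, this is not a defect peculiar to your proof; but you should record it as a hypothesis before you rely on it, because it is not contained in conditions (1)--(4) and is doing real work when you write ``the $\bbC$-linear map $\Lambda$.''
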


In the terminology of S. Ulam, the approximate
*-ho\-mo\-mor\-phisms are \emph{stable} (see e.g., \cite{KanRe:Ulam}).
Connection between lifting theorems and Ulam-stability of approximate
homomorphisms between Boolean algebra was first exploited in
\cite{Fa:AQ}. Analogous results for groups appear in
\cite{Fa:Liftings} and \cite{KanRe:Ulam}. The following a special
case of a well-known result (see \cite{Lor:Lifting}) but we include a
proof for reader's convenience.

\begin{lemma} \lbl{L.approximate.unitary} Assume $\e<1$ and $a$ is an element of a finite-dimensional C*-algebra $A$  such that
$\|a^*a-I\|<\e$. If $a=bu$ is the polar decomposition of $a$ then $u$ is a
unitary and $\|a-u\|<\e$.
\end{lemma}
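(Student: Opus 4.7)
The plan is to extract $u$ as a unitary from invertibility of $a$, identify the positive factor $b$ via functional calculus, and combine these into the norm estimate.

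First I would show $a$ is invertible and hence $u$ is unitary. The hypothesis $\|a^*a - I\| < \e < 1$ places the spectrum of the self-adjoint element $a^*a$ inside the interval $(1-\e, 1+\e) \subseteq (0,2)$, so $a^*a$ is strictly positive and invertible in the finite-dimensional C*-algebra $A$. Since $A$ is finite-dimensional, existence of a left inverse of $a$ (namely $(a^*a)^{-1}a^*$) forces $a$ to be two-sided invertible. With $a$ invertible, the partial isometry in the polar decomposition has full initial and final projection, so $u$ is a unitary.

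Next I would identify $b$ and control $\|b - I\|$ by functional calculus. Under the convention $a = bu$ with $b \geq 0$ on the left and $u$ the partial isometry on the right, the relation $aa^* = b u u^* b = b^2$ (using $uu^* = I$) gives $b = (aa^*)^{1/2}$. Because $a$ is invertible in finite dimensions, $a^*a$ and $aa^*$ have identical spectra, so $\|aa^* - I\| = \|a^*a - I\| < \e$. The elementary inequality $|t^{1/2} - 1| = |t-1|/(t^{1/2}+1) \leq |t-1|$ for $t \geq 0$, applied via continuous functional calculus to $aa^*$, then yields $\|b - I\| \leq \|aa^* - I\| < \e$.

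Finally, the main estimate is immediate: $\|a - u\| = \|bu - u\| = \|(b - I)u\| \leq \|b - I\|\,\|u\| < \e$, since $\|u\| = 1$. No step presents a genuine obstacle; the only point requiring care is the equality of spectra of $a^*a$ and $aa^*$ under invertibility, which is standard in the finite-dimensional setting. (Had the opposite convention $a = ub$ been intended, with $b = (a^*a)^{1/2}$, the functional calculus step would apply directly to $a^*a$ without even needing this spectral identification.)
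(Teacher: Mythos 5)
Your proof is correct and takes essentially the same route as the paper: in both cases the heart of the matter is functional calculus giving $\|b-I\|\leq \|a^*a-I\|<\e$ (the paper writes this as $\|b^2-I\|<\e$, hence $\|b-I\|<\e$, using $b^2=u\,a^*a\,u^*=aa^*$) followed by $\|a-u\|=\|(b-I)u\|=\|b-I\|$. The only cosmetic difference is how unitarity of $u$ is obtained: the paper shows the initial projection $u^*u$ must equal $I$ directly from $\|u^*b^2u-I\|<1$ and then uses that an isometry in a finite-dimensional C*-algebra is unitary, while you pass through invertibility of $a$ (invertibility of $a^*a$ plus direct finiteness of finite-dimensional C*-algebras); both are valid and use finite-dimensionality in the same essential way.
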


\begin{proof} We have $\|u^*b^2u-I\|<1$. Then $P=u^*u$ is a projection 
and $\|I-P\|=\|(I-P)(u^*bu-I)\|<1$. Therefore $u^*u=I$ and since $A$ is
finite-dimensional~$u$ is a unitary.
Hence we have $\|b^2-I\|<\e$ and  $\|b-I\|<\e$. Clearly,
$\|a-u\|=\|bu-u\|=\|b-I\|$.
\end{proof}

\newcommand{\eone}{2\e+\e^2}
 \begin{proof}[Proof of Theorem~\ref{T.approximate}]
  We shall write $\cB$ for $M_m(\bbC)$ and consider its representation on
  the $m$-dimensional Hilbert space, denoted $K$. 
We also write  $a\approx_\delta b$ for $\|a-b\|<\delta$.
 Fix a unitary $u$ in $\cA$ and let $a=\Lambda(u)$.
 Then $aa^*\approx_{\e(1+\e)}\Lambda(u)\Lambda(u^*)\approx_{\e} \Lambda(uu^*)$,
 thus $\|aa^*-I\|<\eone$.
 Similarly $\|a^*a-I\|<\eone$. Therefore by
 Lemma~\ref{L.approximate.unitary} there is a unitary $v\in \cB$ such that
 $\|\Lambda(u)-v\|<\eone=\e_1$.

 Let $\cX$  be the set of all pairs $(u,v)\in \cU(\cA)\times \cU(\cB)$
 such that $\|\Lambda(u)-v\|<\e_1$. Since $\Lambda$ is Borel-measurable, $\cX$ is a Borel set.
 By Theorem~\ref{T.JvN} there
 is a C-measurable $\Lambda'\colon \cU\to \cV$ uniformizing $\cX$.
 Note that $\|\Lambda'(u)-\Lambda(u)\|<\e_1$ for all unitaries $u$.

\newcommand{\etwo}{7\e+3\e^2}
We have $\Lambda'(u)\Lambda'(v)\approx_{(2+\e)\e_1}
\Lambda(u)\Lambda(v)\approx_\e\Lambda(uv)\approx_{\e_1} \Lambda'(uv)$. Thus
$\|\Lambda'(uv)-\Lambda'(u)\Lambda'(v)\|<(3+\e)\e_1+\e=\e_2$.  In the
terminology of \cite{Kaz:epsilon}, $\Lambda'$ is a
\emph{$2\e_2$-representation} of $\cU(\cA)$ on $K$. In \cite{Kaz:epsilon} and
\cite{GrKaRo:Jacobi} it was proved (among other things) that if $\delta<1/100$
then every strongly continuous $2\delta$-representation $\rho$ of a compact
group can be $2\delta$-approximated by a (strongly continuous) representation
$\rho'$. A more streamlined presentation of this proof is given in
\cite[Theorem~5.13]{AGG:Approximation}. The approximating representation is obtained as a
limit of a succession of integrals with respect to the Haar measure and the
assumption that  $\rho$ is continuous can be weakened to the assumption that
$\rho$ is Haar measurable without altering the proof (or the conclusion). In
particular, the proof given in \cite{AGG:Approximation} taken verbatim covers
the case of a Haar-measurable approximation.

\newcommand{\ethree}{15\e+7\e^2}
 Let $\Theta$ be a  continuous homomorphism between the
unitary groups of $\cA$ and $\cB$ that is a $2\e_2$-aproximation to $\Lambda'$
on $\cU(\cA)$. 
 For all $u$
we then have 
\[
\|\Theta(u)-\Lambda(u)\|<2\e_2+\e_1=\e_3.
\]
For a self-adjoint $a\in \cA$ the map $\bbR\ni r\mapsto \exp(ir a)\in \cU(K)$
defines  a norm-continuous one-parameter group. By Stone's theorem 
(e.g.,  \cite[Theorem 5.3.15]{Pede:Analysis}),  there is the unique $\rho(a)\in \cU(K)$
 such that 
\[
\Theta(\exp(ir a))=\exp(ir \rho(a)).
\]
Since  $\cB$ is a von Neumann algebra, we can conclude that $\rho(a)\in \cB$. 

\begin{claim} \label{C.eir} $\rho(I)=I$, hence for all $r\in \bbR$ we have $\Theta(e^{ir} I)=e^{ir} I$. 
\end{claim}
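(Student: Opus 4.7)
The group-homomorphism identity applied to $I\cdot I=I$ gives $\Theta(I)=\Theta(I)^2$, and since $\Theta(I)$ is a unitary, $\Theta(I)=I$.  This only pins down $\Theta(e^{ir}I)$ at $r=0$, so the substance of the claim is to identify the Stone generator $\rho(I)\in\cB$ as $I$.

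My strategy is to bound $\|\Theta(e^{ir}I)-e^{ir}I\|$ uniformly in $r\in\bbR$ by a constant multiple of $\e$, and then read off $\rho(I)=I$ from Stone's theorem.  Since $\|\Theta(u)-\Lambda(u)\|<\e_3$ for all $u\in\cU(\cA)$, it suffices to show $\|\Lambda(e^{ir}I)-e^{ir}I\|=O(\e)$ uniformly in $r$.  For this I would first deduce approximate $\bbR$-linearity of $\Lambda$ on $\cA_{\leq 1}$ from axioms (2) and (4): iterated additivity gives $\Lambda(qx)\approx q\Lambda(x)$ for rational $q$ and $x,qx$ in the unit ball, and axiom (4) combined with (2) makes $\Lambda$ approximately $1$-Lipschitz, extending the estimate to real scalars in $[-1,1]$.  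Next, axiom (1) applied to $(iI)(iI)=-I$, together with $\Lambda(-I)\approx-I$ (from additivity on $I+(-I)=0$ plus $\|\Lambda(0)\|<\e$), gives $\Lambda(iI)^2\approx-I$; axiom (3) combined with $\Lambda(-iI)\approx-\Lambda(iI)$ gives $\Lambda(iI)^*\approx-\Lambda(iI)$.  Hence $\Lambda(iI)=iH$ for a self-adjoint $H\in\cB$ with $H^2\approx I$.  The $\bbC$-linearity intrinsic to the Grove--Karcher--Roh/Kazhdan approximating representation used to construct $\Theta$ (which selects the linear, rather than conjugate-linear, branch in the representation theory of $\cU(\cA)$) then rules out a nontrivial negative spectral projection of $H$, giving $H\approx I$ and $\Lambda(iI)\approx iI$.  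Writing $e^{ir}I=\cos(r)\,I+\sin(r)\cdot iI$ and invoking approximate additivity together with the $\bbR$-linearity just established yields $\|\Lambda(e^{ir}I)-e^{ir}I\|=O(\e)$ uniformly in $r$.

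Combining these bounds, $\|e^{ir(\rho(I)-I)}-I\|=\|e^{ir\rho(I)}-e^{ir}I\|=O(\e)$ for every $r\in\bbR$.  If the self-adjoint operator $\rho(I)-I$ had a nonzero eigenvalue $\lambda$, then at $r=\pi/\lambda$ we would have $|e^{ir\lambda}-1|=2$ on the corresponding eigenspace, contradicting the uniform $O(\e)$ bound provided $\e<1/1000$.  Hence $\rho(I)-I=0$, i.e.\ $\rho(I)=I$, and $\Theta(e^{ir}I)=e^{ir}I$ follows for all $r\in\bbR$.

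The hard part will be the step $\Lambda(iI)\approx iI$.  Axioms (1)--(4) alone admit the conjugate-linear example $a\mapsto\bar a$ on a matrix algebra, which satisfies every axiom with $\e=0$ yet sends $iI$ to $-iI$.  The argument must therefore exploit the $\bbC$-linearity built into the Kazhdan/Grove--Karcher--Roh representation-approximation theorem used to produce $\Theta$, which selects the linear over the conjugate-linear branch permitted by Sakai's classification of continuous homomorphisms between unitary groups.
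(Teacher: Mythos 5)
Your overall skeleton is the same as the paper's: once one knows that $\|\Theta(e^{ir}I)-e^{ir}I\|$ is small (uniformly, or even just $<2$ at each $r$), a spectral value $s\neq 1$ of the self-adjoint generator $\rho(I)$ gives, at a suitable $r$ with $r(1-s)$ an odd multiple of $\pi$, $\|\exp(ir\rho(I))-e^{ir}I\|=2$, a contradiction; this is exactly the paper's endgame, and your eigenvalue-$\lambda$ version of it is fine. The difference is how the small bound is obtained. The paper gets it in one line from $\|\Theta(u)-\Lambda(u)\|<\e_3$ together with $\|\Lambda(e^{ir}I)-e^{ir}I\|\leq\e$, where the latter is used as a property of $\Lambda$ itself: unitality together with approximate compatibility of $\Lambda$ with scalar multiples, which the approximate homomorphisms to which Theorem~\ref{T.approximate} is applied in Theorem~\ref{T.2} do enjoy, because they are built from a representation of the $\bbC$-linear automorphism $\Phi$. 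You instead try to manufacture $\Lambda(e^{ir}I)\approx e^{ir}I$ out of axioms (1)--(4) alone, and this is where your argument breaks.

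As you yourself observe, entrywise conjugation $a\mapsto\bar a$ on $M_k(\bbC)$ is a unital Borel map satisfying (1)--(4) exactly and sends $iI$ to $-iI$, so no argument from (1)--(4) can yield $\Lambda(iI)\approx iI$; the claim genuinely needs an extra hypothesis on $\Lambda$. Your proposed repair --- that the Grove--Karcher--Roh/Kazhdan step ``selects the linear branch'' --- does not close this gap: in that theorem a representation is nothing more than a continuous homomorphism into a unitary group, and $u\mapsto\bar u$ is itself a genuine continuous representation of $\cU(\cA)$. If $\Lambda$ were conjugation, then $\Lambda'$ would be a $2\e_2$-representation close to $u\mapsto\bar u$, the approximating $\Theta$ would be (essentially) $u\mapsto\bar u$, and one would get $\rho(I)=-I$; nothing in the construction of $\Theta$ can distinguish the two branches. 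The branch is selected precisely by the inequality $\|\Lambda(e^{ir}I)-e^{ir}I\|\leq\e$, i.e., by approximate scalar-homogeneity of $\Lambda$, which must be taken as an assumption about $\Lambda$ (and is verified where the theorem is used), not derived from (1)--(4) nor from the way $\Theta$ was produced. A secondary issue: even granting $\Lambda(iI)\approx iI$, your route through approximate rational and real homogeneity by iterating additivity accumulates an error proportional to the number of summands, so the uniform $O(\e)$ bound you assert for all scalars needs a more careful argument; the paper avoids this entirely by invoking the scalar behaviour of $\Lambda$ only on the single unitary $e^{ir}I$.
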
 

\begin{proof} Let $b=\rho(I)$ and assume $b\neq I$. 
Since $b$ is self-adjoint, there is $s\neq 1$ in the spectrum of $b$. 
Pick $r\in \bbR$ such that $r(1-s)=\pi+2 k \pi$ for some $k\in \bbN$. 
Let $\xi$ be  the unital eigenvector of $\exp(irb)$ corresponding to the
eigenvalue $e^{irs}$. 
Then the vector 
\[
\exp(irb)(\xi)-e^{ir}(\xi)=e^{irs} \xi-e^{ir}\xi=e^{irs}(\xi-e^{ir(1-s)}\xi)=2e^{irs}\xi
\]
 has norm 2, 
hence 
 $\|\exp(irb)-e^{ir} I\|=2$. 
However, 
\[
\|\Theta(e^{ir} I)-e^{ir} I\|\leq \|\Theta(e^{ir} I)-\Lambda(e^{ir} I)\|+\|\Lambda(e^{ir} I)-e^{ir} I\|
\leq \e_3+\e<1
\]
a contradiction. 
\end{proof}

Let $u$ be a self-adjoint unitary. Then $u=I-2P$ for some projection~$P$
and therefore $\exp(ir u)=e^{ir} u$ and by  Claim~\ref{C.eir} we have $\Theta(u)=\rho(u)$. 
Also $P=\frac 12 (I-u)$ and one straightforwardly checks
that $\rho(P)=\frac 12(I-\rho(u))$ and 
that $\|\rho(P)-\Lambda(P)\|\leq \e$.

 \begin{claim} \label{C.PQ} 
 If  projections $P$ and $Q$ commute then
$\rho(P)$ and $\rho(Q)$ commute.
If  $PQ=0$ then $\rho(P)\rho(Q)=0$. 
\end{claim}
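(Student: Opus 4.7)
The plan is to deduce both statements from the fact that $\Theta$ is a group homomorphism on $\cU(\cA)$ combined with the defining identity $\Theta(\exp(irX)) = \exp(ir\rho(X))$, using the particularly transparent form $\exp(irP) = I + (e^{ir}-1)P$ that holds for any projection $P$. Neither the approximation bound $\|\rho(P)-\Lambda(P)\| \leq \e$ nor the explicit value of the universal constant will enter. As a preliminary I would confirm that $\rho(P)$ is itself a projection: writing $u = I - 2P$, the preceding paragraph of the text shows $\rho(u) = \Theta(u)$, and $\Theta(u)$ is a self-adjoint unitary because $\Theta$ is a group homomorphism satisfying $\Theta(u^*) = \Theta(u)^*$ on $\cU(\cA)$, giving $\Theta(u)^2 = \Theta(I) = I$ and $\Theta(u)^* = \Theta(u)$; hence $\rho(P) = \tfrac{1}{2}(I - \rho(u))$ is a projection.

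For the commutativity assertion, suppose $PQ = QP$. Then $\exp(irP)$ and $\exp(isQ)$ commute in $\cU(\cA)$ for all $r, s \in \bbR$, so their images $\exp(ir\rho(P))$ and $\exp(is\rho(Q))$ commute in $\cU(\cB)$. Differentiating once in $r$ and once in $s$ at $r = s = 0$ yields $\rho(P)\rho(Q) = \rho(Q)\rho(P)$.

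For the orthogonality assertion, suppose $PQ = 0$; then $QP = (PQ)^* = 0$ and $P + Q$ is itself a projection. Multiplying out gives
\[
\exp(irP)\exp(irQ) = (I + (e^{ir}-1)P)(I + (e^{ir}-1)Q) = I + (e^{ir}-1)(P+Q) = \exp(ir(P+Q)),
\]
the cross term vanishing because $PQ = 0$. Applying $\Theta$ and using the first assertion to combine the commuting exponentials additively, I obtain $\exp(ir\rho(P+Q)) = \exp(ir(\rho(P) + \rho(Q)))$ for every $r \in \bbR$, so $\rho(P+Q) = \rho(P) + \rho(Q)$ by differentiating at $r = 0$. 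Since the left-hand side is a projection by the preliminary step, so is $\rho(P) + \rho(Q)$; expanding $(\rho(P) + \rho(Q))^2 = \rho(P) + \rho(Q)$ and using that $\rho(P), \rho(Q)$ are commuting projections forces $2\rho(P)\rho(Q) = 0$. I do not anticipate a serious obstacle: the argument is driven entirely by Stone's theorem and the elementary functional calculus of commuting projections.
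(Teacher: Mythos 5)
Your proof is correct. The first assertion is handled essentially as in the paper: there the commutativity of $P$ and $Q$ is pushed through $\Theta$ via the self-adjoint unitaries $I-2P$ and $I-2Q$ (in effect the value $r=s=\pi$ of your one-parameter groups), which is the same mechanism as your differentiation argument, and your preliminary observation that $\rho(P)=\tfrac12(I-\Theta(I-2P))$ is a projection is exactly what the text establishes just before the claim. The second assertion is where you genuinely diverge. The paper computes $\Theta((I-2P)(I-2Q))$ in two ways, as $I-2\rho(P+Q)$ and as $(I-2\rho(P))(I-2\rho(Q))$, so that $\rho(P+Q)-\rho(P)-\rho(Q)=\pm2\rho(P)\rho(Q)$, and then invokes the quantitative data --- $\|\rho(R)-\Lambda(R)\|\leq\e$ for projections $R$ together with the approximate additivity of $\Lambda$ --- to bound the left-hand side by $4\e<1$; a projection of norm less than $1$ must vanish. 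You instead derive the exact identity $\rho(P+Q)=\rho(P)+\rho(Q)$ from $\exp(irP)\exp(irQ)=\exp(ir(P+Q))$, the homomorphism property, the already-proved commutativity of $\rho(P)$ and $\rho(Q)$, and differentiation at $r=0$, and then finish with pure projection algebra. Your route buys independence from the $\e$-estimates: it applies to any continuous homomorphism of the unitary groups, not just one close to $\Lambda$, and it yields as a by-product the additivity of $\rho$ on commuting projections, which the paper later uses in Claim~\ref{C.trace} with only the remark that it is obvious. The paper's route buys brevity --- a one-line computation with involutions --- at the cost of leaning on the approximation bounds already in hand.
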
 

\begin{proof}  
  Since $I-2P$ and $I-2Q$ commute if and
only if $P$ and $Q$ commute, the first sentence follows. 
If $PQ=0$, then 
$I-2\rho(P+Q)=\Theta((I-2P)(I-2Q))=(I-2\rho(P))(I-2\rho(Q))$,
we have
$\rho(P+Q)-\rho(P)-\rho(Q)=2\rho(P)\rho(Q)$.
The left hand side has norm $<4\e<1$. As a product of two commuting
projections, $\rho(P)\rho(Q)$ is a projection, so it has to be 0.
\end{proof} 

We say that projections $P$ and $Q$ in a C*-algebra $\cA$ 
are \emph{Murray--von Neumann equivalent} and  write $P\sim Q$ 
if there is a partial isometry $u$ such that $uPu^*=Q$. 
In the case when $\cA$ is finite-dimensional this is equivalent 
to asserting that there is a unitary $u$ such that $uPu^*=Q$.

\begin{claim} \label{C.MvN} 
If $P$ and $Q$ are Murray--von Neumann equivalent projections 
then $\rho(P)$ and $\rho(Q)$ are Murray--von Neumann equivalent 
projections. 
\end{claim}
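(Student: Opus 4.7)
The plan is to reduce Murray--von Neumann equivalence of $P$ and $Q$ to a conjugation identity involving the associated self-adjoint unitaries $u_P=I-2P$ and $u_Q=I-2Q$, and then push this identity through the group homomorphism $\Theta$. Since $\cA$ is finite-dimensional, by the remark preceding the claim we may fix a unitary $v\in \cU(\cA)$ with $vPv^{*}=Q$. Multiplying by $-2$ and adding $I$ gives $v u_P v^{*}=u_Q$, an identity in $\cU(\cA)$.

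Next, I would apply $\Theta$. Because $\Theta$ is a group homomorphism on $\cU(\cA)$, we get
\[
\Theta(v)\,\Theta(u_P)\,\Theta(v)^{*}=\Theta(u_Q).
\]
By the discussion immediately after Claim~\ref{C.eir}, for every self-adjoint unitary $u$ we have $\Theta(u)=\rho(u)$, and for a projection $R$ the relation $\rho(R)=\tfrac12(I-\rho(u_R))$ holds. Substituting $u_P$ and $u_Q$ and using $\rho(u_P)=I-2\rho(P)$, $\rho(u_Q)=I-2\rho(Q)$, the displayed identity becomes
\[
\Theta(v)\bigl(I-2\rho(P)\bigr)\Theta(v)^{*}=I-2\rho(Q),
\]
which simplifies to $\Theta(v)\,\rho(P)\,\Theta(v)^{*}=\rho(Q)$. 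Since $\Theta(v)\in \cU(\cB)$ (as $\Theta$ takes values in the unitary group of $\cB$), this exhibits $\Theta(v)$ as a unitary in $\cB$ implementing the equivalence $\rho(P)\sim\rho(Q)$.

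There is no real obstacle here once the two preceding facts (that $\Theta$ is multiplicative on $\cU(\cA)$ and that $\rho$ agrees with $\Theta$ on self-adjoint unitaries) are in hand; the only subtle point is making sure that $\rho(P)$ is actually a projection, which was already recorded in Claim~\ref{C.PQ} via the identity $\rho(P)=\tfrac12(I-\rho(u_P))$ together with the fact that $\rho(u_P)$ is a self-adjoint unitary. Thus the proof reduces to the one-line computation above, and the claim follows with the implementing partial isometry being the unitary $\Theta(v)$.
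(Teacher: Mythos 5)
Your proof is correct and follows essentially the same route as the paper: pass from $vPv^*=Q$ to the self-adjoint-unitary identity $v(I-2P)v^*=I-2Q$, apply the homomorphism $\Theta$, and unwind using $\Theta(I-2P)=\rho(I-2P)=I-2\rho(P)$. The only cosmetic slip is the attribution: that $\rho(P)$ is a projection is established in the paragraph preceding Claim~\ref{C.PQ} (where $\rho(P)=\tfrac12(I-\rho(I-2P))$ is derived from $\rho(I-2P)$ being a self-adjoint unitary), not in Claim~\ref{C.PQ} itself, which concerns commutativity and orthogonality.
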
 

\begin{proof} Fix a unitary $u$ such that $uPu^*=Q$. Write $v_P=I-2P$ ad
$v_Q=I-2Q$. Then (with $w=\Theta(u)$) we have
 $w\Theta(v_P)w^*= \Theta(v_Q)$, 
and therefore $w\Theta(P)w^*=\Theta(Q)$. 
\end{proof}

Let $u$ be a unitary in $\cA$ and let $e^{ir_j}$, for $0\leq j\leq n-1$, 
be the spectrum of $u$. Fix projections $P_j$, for $0\leq j\leq n-1$, such that
$u=\sum_{j=0}^{n-1} e^{ir_j} P_j=\prod_{j=0}^{n-1} \exp(ir_j P_j)$. Then (using Claim~\ref{C.PQ} in the 
last equality)
\[
\textstyle\Theta(u)=\prod_{j=0}^{n-1} \Theta(\exp (ir_j P_j))=\prod_{j=0}^{n-1} \exp(ir \rho(P_j))
=\sum_{j=0}^{n-1} e^{ir_j} \rho(P_j). 
\]

\begin{claim} \label{C.trace}
If  $A$ is isomorphic to  
$M_k(\bbC)$ for some natural number $k$ then~$\Theta$ preserves the normalized 
trace, $\Tr$, of the unitaries. 
\end{claim}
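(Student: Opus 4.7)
The plan is to exploit the algebraic structure of $\rho$ already assembled in Claims~\ref{C.eir}, \ref{C.PQ}, and \ref{C.MvN}, and then reduce the statement to a rank count for commuting projections in $M_m(\bbC)$.

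I would first record that $\rho$ is additive on pairwise orthogonal projections. This is implicit in the proof of Claim~\ref{C.PQ}: for orthogonal $P,Q$ the identity $\rho(P+Q)-\rho(P)-\rho(Q)=2\rho(P)\rho(Q)$ combined with $\rho(P)\rho(Q)=0$ gives $\rho(P+Q)=\rho(P)+\rho(Q)$, and induction extends this to arbitrary finite orthogonal systems.

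Using $A\cong M_k(\bbC)$, fix a system $P_1,\dots,P_k$ of pairwise orthogonal minimal projections with $\sum_i P_i=I$. All $P_i$ are mutually Murray--von Neumann equivalent, so Claim~\ref{C.MvN} forces the $\rho(P_i)$ to be pairwise Murray--von Neumann equivalent projections in $M_m(\bbC)$, hence of a common rank $r$. Additivity together with Claim~\ref{C.eir} then gives $\sum_i \rho(P_i)=\rho(I)=I$, so $kr=m$ and the normalized trace of each $\rho(P_i)$ equals $r/m=1/k=\Tr(P_i)$. For an arbitrary projection $P\in M_k(\bbC)$ of rank $j$, write $P$ as an orthogonal sum of $j$ minimal projections; additivity and the previous line yield $\Tr(\rho(P))=j/k=\Tr(P)$.

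For a unitary $u$ with spectral decomposition $u=\sum_l e^{ir_l}P_l$, the identity $\Theta(u)=\sum_l e^{ir_l}\rho(P_l)$ established just before the claim combined with linearity of $\Tr$ gives
\[
\Tr(\Theta(u))=\sum_l e^{ir_l}\Tr(\rho(P_l))=\sum_l e^{ir_l}\Tr(P_l)=\Tr(u).
\]
No serious obstacle remains once the three preceding claims are in place; the hypothesis $A\cong M_k(\bbC)$ is used only to guarantee that all minimal projections are mutually Murray--von Neumann equivalent, which is precisely what lets the rank arithmetic pin down $r=m/k$ and hence forces the normalized trace to be preserved.
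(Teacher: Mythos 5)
Your proof is correct and follows essentially the same route as the paper: the spectral identity $\Theta(u)=\sum_j e^{ir_j}\rho(P_j)$, Murray--von Neumann equivalence via Claim~\ref{C.MvN}, and additivity of $\rho$ on orthogonal projections reduce everything to the rank count in $M_m(\bbC)$. You are merely a bit more explicit than the paper in pinning down the scaling constant through $\rho(I)=I$ (Claim~\ref{C.eir}) and in extracting additivity from the displayed identity in the proof of Claim~\ref{C.PQ}, which the paper treats as obvious.
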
 

\begin{proof} 
By the above computation, we only need to show there is $d\in \bbN$
such that for every $m$, every projection of rank $m$  in $\cA$ is mapped to a projection of 
rank $dm$ in $\cB$. But this is an immediate consequence of
Claim~\ref{C.MvN} and the obvious
equality $\rho(P+Q)=\rho(P)+\rho(Q)$\footnote{This equality  holds
for any two self-adjoint operators, but we shall not need it.}
 for commuting projections $P$ and $Q$,
since in $M_k(\bbC)$ two projections are Murray-von Neumann
equivalent if and only if they have the same rank. 
\end{proof}

\begin{claim} The map 
 $\Upsilon\colon \cA \to \cB$ given by $\Upsilon(\sum_j \alpha_j
u_j)=\sum_j \alpha_j \Theta(u_j)$ whenever $\alpha_j$ are scalars and $u_j$ are
unitaries is a well-defined *-homomorphism from $\cA$ into $\cB$. 
\end{claim}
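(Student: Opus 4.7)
My plan is to establish well-definedness of $\Upsilon$ first; linearity, multiplicativity, and $*$-preservation then follow mechanically. Once $\Upsilon$ is well-defined, linearity is built into the formula; multiplicativity follows from $\Theta(uv)=\Theta(u)\Theta(v)$ via
\[
\Upsilon\bigl(\textstyle\sum_{j,k}\alpha_j\beta_ku_jv_k\bigr)=\sum_{j,k}\alpha_j\beta_k\Theta(u_j)\Theta(v_k)=\Upsilon(a)\Upsilon(b);
\]
and $*$-preservation follows from $\Theta(u^*)=\Theta(u)^*$, which holds since $\Theta(I)=I$ (any group homomorphism sends the identity to the identity) and hence $\Theta(u)\Theta(u^*)=\Theta(I)=I$.

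For well-definedness, the strategy is a trace-pairing argument, first reduced to the case $\cA=M_d(\bbC)$. Let $P_1,\dots,P_n$ be the minimal central projections of $\cA$. By Claim~\ref{C.PQ} the $\rho(P_k)$ are pairwise orthogonal; combined with Claim~\ref{C.eir} ($\rho(I)=I$) and the additivity of $\rho$ on orthogonal projections --- which follows from $\Theta((I-2P)(I-2Q))=\Theta(I-2P)\Theta(I-2Q)$ when $PQ=0$ --- one gets $\sum_k\rho(P_k)=I$. For a unitary $u_k$ in $P_k\cA P_k$, set $\tilde u_k=u_k+(I-P_k)\in\cU(\cA)$. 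Using the spectral decomposition of $\tilde u_k$ (whose $1$-eigenspace dominates $I-P_k$), together with the monotonicity $P\leq Q\Rightarrow\rho(P)\leq\rho(Q)$ derived from the same additivity, one verifies $\Theta(\tilde u_k)\rho(P_l)=\rho(P_l)$ for $l\neq k$. Since every unitary $u\in\cU(\cA)$ factors as $\prod_k\tilde u_k$ (the $\tilde u_k$'s commute and act on complementary blocks), $\Theta(u)=\sum_k\rho(P_k)\Theta(\tilde u_k)\rho(P_k)$, so well-definedness of $\Upsilon$ on $\cA$ reduces to well-definedness within each matrix block.

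Now assume $\cA=M_d(\bbC)$ with normalized trace $\tau$, and suppose $\sum_j\alpha_ju_j=0$. For every $v\in\cU(\cA)$,
\[
0=\tau\bigl(\textstyle\sum_j\alpha_ju_jv^*\bigr)=\sum_j\alpha_j\tau(u_jv^*).
\]
Each $u_jv^*\in\cU(\cA)$, so by Claim~\ref{C.trace} and the group-homomorphism property, $\tau(u_jv^*)=\Tr(\Theta(u_jv^*))=\Tr(\Theta(u_j)\Theta(v)^*)$, where $\Tr$ is the normalized trace on $M_m(\bbC)=\cB$. Setting $X=\sum_j\alpha_j\Theta(u_j)\in\cB$, we obtain $\Tr(X\Theta(v)^*)=0$ for every $v\in\cU(\cA)$. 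Since $X^*=\sum_j\bar\alpha_j\Theta(u_j^*)$, taking $v=u_j^*$ and summing with coefficients $\bar\alpha_j$ yields $\Tr(XX^*)=0$, hence $X=0$ by positive definiteness of the Hilbert--Schmidt inner product on $M_m(\bbC)$, proving well-definedness.

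The main technical obstacle is the block-reduction step: ensuring $\Theta$ respects the central decomposition of $\cA$ requires combining Claim~\ref{C.PQ}, Claim~\ref{C.eir}, multiplicativity of $\Theta$ on commuting unitaries, and spectral analysis. None of these ingredients is individually hard, but assembling them into the identity $\Theta(\tilde u_k)\rho(P_l)=\rho(P_l)$ for $l\neq k$ is the most delicate part of the argument.
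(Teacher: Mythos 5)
Your proposal is correct and follows essentially the same route as the paper: reduce to the full matrix algebra case via the minimal central projections (the paper cuts $\Theta(u)$ into blocks using $\sum_j\rho(S_j)=I$, which you justify in more detail through the factorization $u=\prod_k\tilde u_k$ and the identity $\Theta(\tilde u_k)\rho(P_l)=\rho(P_l)$ for $l\neq k$), and then run Dye's trace argument using Claim~\ref{C.trace}. The only blemish is a harmless index slip at the end of the matrix case: to obtain $\Tr(XX^*)=0$ you should specialize the identity $\Tr(X\Theta(v)^*)=0$ at $v=u_j$ (not $v=u_j^*$) and sum with coefficients $\bar\alpha_j$.
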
 

\begin{proof}
Since every
operator in $\cA$ is a linear combination of four unitaries (cf. the proof of
Lemma~\ref{L.L.1}) 
 in order to see that $\Upsilon$ is well defined
  we only need to check that $\sum_j \alpha_j u_j=0$ implies
$\sum_j \alpha_j \Theta(u_j)=0$.

Let us first consider the case when $\cA$ is a full matrix algebra. 
 The following argument is taken from Dye (\cite[Lemma~3.1]{Dye:Unitary}).

 Assume  $a=\sum_i \alpha_i u_i=0$. Then
$0=\Tr(aa^*)=\sum_{i,j}\alpha_i\bar\alpha_j \Tr(u_iu_j^*)$. Also with $b=\sum_i
\alpha_i \Theta(u_i)$ we have
$$
\textstyle\Tr(bb^*) =\sum_{i,j} \alpha_i
\bar\alpha_j\Tr(\Theta(u_iu_j^*))=\sum_{i,j}\alpha_i\bar\alpha_j\Tr(u_iu_j^*),
$$
which is 0 by Claim~\ref{C.trace}. Therefore $b=0$, proving that $\Upsilon$ is
well-defined when $\cA$ is a full matrix algebra. 

In order to prove the general case, 
 let $S_0,\dots, S_{m-1}$ list all minimal central projections of $\cA$. 
Then $S_i \cA S_i$ is isomorphic to some $M_{k(i)}(\bbC)$ and
therefore $\Upsilon$ is well-defined on this subalgebra. 
However, $\Theta(u)=\sum_{j=0}^{m-1} \rho(S_j) \Theta(u)$ for all unitaries $u$ in $\cA$, 
and therefore $\Upsilon (a)=\sum_{j=0}^{m-1}\rho(S_j) \Upsilon(a)$ is well-defined for all $a\in \cA$. 

Clearly $\Upsilon$ is a complex vector space homomorphism and
$\Upsilon(u)=\Theta(u)$ for a unitary $u$ in $\cA$. It is
straightforward to check that $\Upsilon$ is multiplicative and a
*-homomorphism.
\end{proof} 

Any $a\in \cA_{\leq 1}$ can be written as $b+ic$, where $b$ and $c$
are self-adjoints of norm $\leq 1$, and $\Lambda(a)\approx_{3\e}
\Lambda(b)+i\Lambda(c)$. If $b$ is self-adjoint of norm $\leq 1$, then there
are unitaries $u$ and $v$ such that $b=u+v$ (cf. the proof of
Lemma~\ref{L.L.1}). Therefore $\Lambda(b)\approx_\e
\Lambda(u)+\Lambda(v)\approx_{2\e_3} \Xi(u)+\Xi(v)=\Upsilon(b)$. All in all, we
have $\|\Lambda(a)-\Upsilon(a)\|\leq \e+\e_3$ for $a\in \cA_{\leq
1}$.
 Since for small $\e$ each $\e_i$ for $1\leq i\leq 3$ is bounded by a linear function of $\e$,
 this concludes the proof.
 \end{proof}

The assumption that $\Lambda$ is unital was not necessary in
Theorem~\ref{T.approximate}. This is an easy consequence of
Theorem~\ref{T.approximate} and the following well-known lemma, whose
proof can also be found in \cite{Lor:Lifting}

\begin{lemma} \lbl{L.approximate.projection} If $0<\e<1/8$ then
in every C*-algebra $\cA$ the following holds. For every $a$ satisfying
$\|a^2-a\|<\e$ and $\|a^*-a\|<\e$ there is a projection~$Q$ such that
$\|a-Q\|<4\e$.
\end{lemma}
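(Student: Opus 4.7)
I would first replace $a$ by its self-adjoint part $b=(a+a^*)/2$, which satisfies $\|b-a\|=\|a-a^*\|/2<\e/2$. The key quantitative step is to bound $\|b^2-b\|$. Expanding,
\[
4(b^2-b)=(a^2-a)+((a^*)^2-a^*)+(aa^*-a)+(a^*a-a^*).
\]
The first two summands have norm $<\e$ each. For the third, $aa^*-a=a(a^*-a)+(a^2-a)$, of norm at most $\e\|a\|+\e$; the fourth is estimated symmetrically. Hence $\|b^2-b\|\leq \e(\|a\|+2)/2$.

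Next I would establish an a priori bound on $\|a\|$. From $\|a\|^2=\|a^*a\|\leq \|a^2\|+\|a^*a-a^2\|\leq (\|a\|+\e)+\e\|a\|$, solving the quadratic inequality yields $\|a\|\leq ((1+\e)+\sqrt{(1+\e)^2+4\e})/2$, which for $\e<1/8$ is strictly less than $3/2$. In particular $\|b^2-b\|\leq \delta$ for some $\delta<\e(3/2+2)/2=7\e/4$, which is well below $1/4$.

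Now I would analyze the spectrum of $b$. For $\lambda\in\sigma(b)$ we have $|\lambda(\lambda-1)|\leq \delta$. If $|\lambda|\leq 1/2$ then $|\lambda-1|\geq 1/2$, forcing $|\lambda|\leq 2\delta$; symmetrically, if $|\lambda|>1/2$ then $|\lambda-1|\leq 2\delta$. Since $2\delta<1/2$, the spectrum of $b$ splits into two clopen pieces contained in $[-2\delta,2\delta]$ and $[1-2\delta,1+2\delta]$. Let $f$ be the continuous function on $\sigma(b)$ equal to $0$ on the first piece and $1$ on the second; then $|f(\lambda)-\lambda|\leq 2\delta$ on $\sigma(b)$ and $f^2=f$, so $Q:=f(b)$ is a (self-adjoint) projection with $\|Q-b\|\leq 2\delta$. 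Combining everything,
\[
\|Q-a\|\leq \|Q-b\|+\|b-a\|\leq 2\delta+\e/2\leq \e(\|a\|+2)+\e/2=\e(\|a\|+5/2)<4\e
\]
since $\|a\|<3/2$. The main obstacle is purely bookkeeping: assembling the various $\e$-estimates so that the final constant is exactly $4$ rather than something slightly larger, which is why the hypothesis $\e<1/8$ is imposed to absorb the quadratic $\e^2$ terms.
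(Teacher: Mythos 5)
Your proof is correct, and its overall skeleton is the same as the paper's: pass to the self-adjoint part $b=(a+a^*)/2$, bound $\|b^2-b\|$, split $\sigma(b)$ into a piece near $0$ and a piece near $1$, and let $Q$ be the spectral projection of the latter via continuous functional calculus, finishing with the triangle inequality. The genuine difference is in the key quantitative step, the a priori bound on $\|a\|$. The paper spends its longest paragraph proving $\|a\|<2$ by representing $\cA$ on a Hilbert space, choosing an almost-norming unit vector in the range of a spectral projection of $b$, and extracting the quadratic inequality $M+3\e/2>M^2-M\e$; you instead obtain a quadratic inequality directly from the C*-identity, $\|a\|^2=\|a^*a\|\le\|a^2\|+\|(a^*-a)a\|<\|a\|+\e+\e\|a\|$, which is both more elementary (no concrete representation, no spectral projections at this stage) and sharper, giving $\|a\|<3/2$. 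That sharper bound is what it buys you: it yields $\|b^2-b\|<7\e/4$, so each spectral piece lies within $2\|b^2-b\|<7\e/2$ of $\{0,1\}$ and $\|a-Q\|<7\e/2+\e/2=4\e$ with no further work, whereas the paper's cruder $\|a\|<2$ only gives $\|b^2-b\|<2\e$ and forces the more delicate concluding estimate of $\sup\{\min(|x|,|1-x|)\mid |x^2-x|<2\e\}$, which is tight (indeed problematic) as $\e$ approaches $1/8$; your route sidesteps that issue entirely. One small remark: to place $Q$ inside $\cA$ when $\cA$ is not unital, observe that your function $f$ vanishes on the spectral piece containing $0$, so $Q=f(b)\in C^*(b)\subseteq\cA$; alternatively one passes to the unitization, as the paper tacitly does.
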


\begin{proof}
 We claim that
$M=\|a\|<2$. Let $b=(a+a^*)/2$. Then $\|a-b\|<\e/2$,  $b$ is self-adjoint, and
 $\|b\|>M-\e/2$. Consider $\cA$ as a concrete C*-algebra acting on some
Hilbert space $H$. In the weak closure of $\cA$ find a spectral projection $R$
of $b$ corresponding to $(M-\e/2+\|a-b\|,\|b\|]$. If $\xi$ is a unit vector in
the range of $R$, then $\|b\xi-M\xi\|<\e/2-\|a-b\|$. If $\eta=a\xi-M\xi$ then
$\|\eta\|<\e/2$, and $M-\e/2<\|a\xi\|\leq M$. Also,
$a^2\xi=a\eta+Ma\xi=a\eta+M\eta+M^2\xi$, and therefore $\|a^2\xi\|\geq
M^2-M\e$. Since  $\|a^2\xi-a\xi\|<\e$, we have
$\|a^2\xi\|<\|a\xi\|+\e<M+3\e/2$. Therefore  $M+3\e/2>M^2-M\e$, and with
$\e<1/4$ this implies $M<2$ as claimed.

Therefore  $\|aa^*-a\|<2\|a^*-a\|+\|a^2-a\|<3\e$. So we have
\begin{equation}
4\|b^2-b\| =\|a^2+aa^*+a^*a+(a^*)^2-2a-2a^*\|<8\e.\lbl{E.4b}
\end{equation}
We may assume $\cA$ is unital. Since $b$ is self-adjoint, via the function
calculus in $C^*(b,I)$, the subalgebra of $\cA$ generated by $b$ and $I$, $b$
 corresponds to the identity function on its spectrum $\sigma(b)$. By
 \eqref{E.4b},
 for every $x\in \sigma(b)$ we have $|x^2-x|<2\e$. Therefore $1/2\notin
 \sigma(b)$,
  $U=\{x\in \sigma(b)\mid |x-1|<1/2\}$ is a relatively closed and open subset of
 $\sigma(b)$, and the projection $Q$ corresponding to the
 characteristic function of this set in $C^*(b,I)$ belongs to $\cA$.
Then $\|Q-b\|=\sup_{x\in \sigma(b)}\min(|x|,|1-x|)$. If
$\delta(\e)=\sup\{\min(|x|,|1-x|)\mid |x^2-x|<2\e\}+\e/2$, then
$\|a-Q\|<\delta(\e)$. Clearly $\delta(\e)<4\e$ for $\e<1$. \end{proof}

\section{Automorphisms with C-measurable representations are inner}
\lbl{S.Borel}

Each known proof that all automorphisms of a quotient structure related to
$\cP(\bbN)/\Fin$ or $\cB(H)/\cK(H)$ are `trivial' proceeds in two stages. In
the first some additional set-theoretic axioms are used to prove that all
automorphisms are `topologically simple.' In the second all `topologically
simple' automorphisms are shown to be trivial, without using any additional
set-theoretic axioms (see \cite{Fa:Rigidity}).
 The present proof is not an exception and the present section deals with the
 second step.  Even though no additional set-theoretic axioms
are needed for its conclusion, the proof of Theorem~\ref{T.3} given
at the end of this section will take a metamathematical detour via TA
and Shoenfield's theorem (Theorem~\ref{T.Shoenfield}). Note that the
latter is not needed for the proof of Theorem~\ref{T.0}, since TA
follows from its assumptions.

\begin{thm}\lbl{T.3} Every automorphism of $\cC(H)$
with a C-measurable representation on $\cU(H)$ is inner.
\end{thm}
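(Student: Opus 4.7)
The plan is to reduce Theorem~\ref{T.3} to Theorem~\ref{P.1} by constructing a coherent family of unitaries implementing $\Phi$, and to remove the TA hypothesis (when it does not already hold in $V$) via Shoenfield absoluteness.

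First assume TA. Apply clause~\eqref{L.guess.1.5} of Lemma~\ref{L.guess} to the given C-measurable representation of $\Phi$ on $\cU(H)$ to obtain a Borel-measurable representation on $\cB(H)$, which we call $\Psi$. For each partition $\vec E$ of $\bbN$ into finite intervals the restriction of $\Psi$ to $\calD[\vec E]$ is a Borel-measurable representation of the restriction of $\Phi$ to $\pi[\calD[\vec E]]$. By Theorem~\ref{T.2}, which turns a C-measurable representation on an FDD algebra into conjugation by a unitary using the Ulam-stability theorem (Theorem~\ref{T.approximate}), there exists a unitary $u_{\vec E}$ such that $\Psi_{u_{\vec E}}$ represents $\Phi$ on $\calD[\vec E]$. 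The family $\cF=\{(\vec E,u_{\vec E})\}$ is automatically coherent: for $a\in\calD[\vec E]\cap\calD[\vec F]$ both $\Psi_{u_{\vec E}}(a)-\Psi(a)$ and $\Psi(a)-\Psi_{u_{\vec F}}(a)$ are compact, hence so is their sum. By Lemma~\ref{L.U.1} the family $\cF$ determines $\Phi$, and by Theorem~\ref{P.1} $\Phi$ is inner.

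To remove the assumption of TA, fix a Borel code $c$ for the Borel representation $\Psi$ produced above, and let $f_c$ denote the Borel function on $\cB(H)$ that $c$ codes (so $f_c=\Psi$ in $V$). The assertion
\[
\exists u\in \cU(H)\ \forall a\in \cB(H)_{\leq 1}\ \bigl[f_c(a)-uau^*\in\cK(H)\bigr]
\]
is $\Sigma^1_2$ in the parameter $c$: compactness is Borel by Lemma~\ref{L.T.3}, so the inner universal quantifier gives a $\Pi^1_1$ condition on $u$, and the outer existential gives $\Sigma^1_2$. The $\Pi^1_1$ properties of $f_c$ that express it as a representation of an automorphism of $\cC(H)$ modulo compacts transfer to any forcing extension by Shoenfield absoluteness, so passing to $V[G]$ in which TA holds (which exists by \cite{Ve:Applications}) and applying the first paragraph there yields the displayed $\Sigma^1_2$ statement in $V[G]$. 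Theorem~\ref{T.Shoenfield} then transfers it back to $V$.

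The principal obstacle is Theorem~\ref{T.2}, whose extraction of a single unitary from a C-measurable representation on a full FDD algebra requires the Ulam-stability of approximate *-homomorphisms (Theorem~\ref{T.approximate}) together with measurable selection and a patching argument across the FDD summands. The coherence of $\cF$ and the absoluteness packaging are routine once this FDD-level result is available.
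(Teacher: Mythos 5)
Your proposal is correct and follows essentially the same route as the paper: reduce the C-measurable representation to a Borel one via Lemma~\ref{L.guess}, invoke Theorem~\ref{T.2} to get a coherent family of unitaries implementing $\Phi$ on the FDD algebras, conclude via Theorem~\ref{P.1} assuming TA, and then remove TA through Theorem~\ref{T.Shoenfield}. One small imprecision: the fact that $f_c$ represents an \emph{automorphism} is not $\Pi^1_1$ (surjectivity of the induced endomorphism is $\Pi^1_2$), but this does not affect the argument since Shoenfield absoluteness covers $\Pi^1_2$ as well.
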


\subsection{Inner on FDD von Neumann algebras}
 If $v$ is a linear isometry between
cofinite-dimensional subspaces of $H$ then $\Psi_v(a)=vav^*$ is a
representation of an automorphism of $\cC(H)$. We use notation $\vec
E$, $\calD[\vec E]$ and $\calD_M[\vec E]$ from \S\ref{S.Notation}.

\begin{lemma}\lbl{L.trivial.2}
Assume $\# E_n$ is a nondecreasing sequence. If an automorphism
$\Phi$ of the Calkin algebra is inner on $\calD_M[\vec E]$ for some
infinite $M$, then it is inner on $\calD[\vec E]$.
\end{lemma}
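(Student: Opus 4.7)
The plan is to deduce this directly from Lemma~\ref{L.trivial}. Taking $\calD_1 = \calD_M[\vec E]$ and $\calD_2 = \calD[\vec E]$, it suffices to exhibit a single partial isometry $u$ satisfying $u \calD[\vec E] u^* \subseteq \calD_M[\vec E]$ together with $(u^*u)\calD[\vec E](u^*u) = \calD[\vec E]$. The cleanest choice is to arrange $u^*u = I$, so that the second condition holds trivially and only the compression-into-$\calD_M[\vec E]$ condition requires real work.

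Enumerate $M$ increasingly as $m(0) < m(1) < \cdots$, so that $m(n) \geq n$ for all $n$; since $(\# E_n)$ is nondecreasing, this gives $\# E_{m(n)} \geq \# E_n$ for every $n$. For each $n$ fix a linear isometry
\[
u_n \colon \SPAN\{e_i \mid i \in E_n\} \to \SPAN\{e_i \mid i \in E_{m(n)}\},
\]
possible because the target has at least the dimension of the source. Set
\[
u = \sum_{n} u_n,
\]
where the sum is interpreted in the strong operator topology. By construction $u$ is a partial isometry on $H$ with $u^*u = I$ and $uu^* = \bP^{\vec E}_{m[\bbN]} \leq \bP_M$.

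To verify the key containment, let $a \in \calD[\vec E]$ and write $a = \sum_n a_n$ with $a_n \in \cB(\SPAN\{e_i \mid i \in E_n\})$. Then
\[
uau^* \;=\; \sum_n u_n a_n u_n^*,
\]
and each $u_n a_n u_n^*$ is supported on the range of $u_n$, a subspace of $\SPAN\{e_i \mid i \in E_{m(n)}\}$. Consequently $uau^* \in \calD_{m[\bbN]}[\vec E] \subseteq \calD_M[\vec E]$. Thus $u\calD[\vec E]u^* \subseteq \calD_M[\vec E]$, which together with $u^*u = I$ is exactly the hypothesis of Lemma~\ref{L.trivial} with $\calD_1 = \calD_M[\vec E]$ and $\calD_2 = \calD[\vec E]$. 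Since $\Phi$ is by assumption inner on $\calD_M[\vec E]$, Lemma~\ref{L.trivial} yields that $\Phi$ is inner on $\calD[\vec E]$.

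There is no substantive obstacle: the monotonicity of $(\# E_n)$ is used in exactly one place, to guarantee $\#E_n \leq \#E_{m(n)}$ and hence the existence of the isometries $u_n$, and the rest is a mechanical application of Lemma~\ref{L.trivial}.
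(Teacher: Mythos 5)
Your proof is correct and follows the same approach as the paper: reduce to Lemma~\ref{L.trivial} by constructing $u=\sum_n u_n$ from blockwise isometries $u_n\colon\SPAN\{e_i\mid i\in E_n\}\to\SPAN\{e_i\mid i\in E_{m(n)}\}$, using the nondecreasing hypothesis to guarantee $\# E_n\leq\# E_{m(n)}$ and hence $u^*u=I$. This is precisely the argument in the paper, just spelled out in a bit more detail.
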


\begin{proof} By Lemma~\ref{L.trivial}, it will suffice to find a partial isometry $u$ such that
$u\calD[\vec E]u^*\subseteq \calD_M[\vec E]$ and $u^*u=I$. If $(m_j)$
is an increasing enumeration of $M$, then $\# E_j\leq \# E_{m_j}$ by
our assumption. Let $u_j\colon \SPAN\{e_i\mid i\in E_j\}\to
\SPAN\{e_i\mid i\in E_{m_j}\}$  be a partial isometry. Then $u=\sum_j
u_j$ is as required.
\end{proof}

\begin{thm} \lbl{T.2}Assume $\Phi$ is an automorphism of $\cC(H)$,  $\vec E$
is a partition of~$\bbN$ into finite intervals, and
   $\Phi$  has a C-measurable representation on $\calD[\vec E]$.
Then $\Phi$ has a representation which is a *-ho\-mo\-mor\-phi\-sm from $\calD[\vec
E]$ into~$\cB(H)$. Moreover,  there is a partial isomorphism~$v$ of
cofinite-dimensional subspaces of $H$ such that $\Psi_v$ is a representation
of~$\Phi$ on $\calD[\vec E]$. 
\end{thm}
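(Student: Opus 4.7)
\emph{Stage 1 (regularity upgrade).} The plan is to prove the theorem in three stages. First, by Lemma~\ref{L.guess}(\ref{L.guess.1.5}), replace the given C-measurable representation by a Borel-measurable representation $\Psi\colon \calD[\vec E]\to \cB(H)$ of $\Phi$. Recall that $\cU[\vec E]=\prod_n \cU(|E_n|)$ is a compact metric group in the strong operator topology, which will be used later.

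\emph{Stage 2 (from $\Psi$ to a *-homomorphism).} The goal here is to produce a genuine *-homomorphism $\Theta\colon \calD[\vec E]\to \cB(H)$ with $\pi\Theta=\Phi\pi$, using Theorem~\ref{T.approximate} block-by-block. For each $N$, consider the finite-dimensional unital subalgebra
\[
\cA_N=\bigoplus_{n<N} \bP_{\{n\}}\calD[\vec E]\bP_{\{n\}} \;\oplus\; \bbC\cdot \bP_{[N,\infty)} \;\subseteq\; \calD[\vec E].
\]
The restriction $\Psi\rs\cA_N$, compressed onto a suitable finite-dimensional $M_k(\bbC)\subseteq \cB(H)$, should give a Borel-measurable $\e$-approximate *-homomorphism: the identity $\|c\|_{\cK}=\lim_M\|(I-\bfR_M)c\|$ converts the pointwise compact-valued errors $\Psi(ab)-\Psi(a)\Psi(b)$ into norm-small ones, and Borel-measurability together with finite-dimensionality of $\cA_N$ provides uniformity over its unit ball. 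Theorem~\ref{T.approximate} then yields a *-homomorphism $\Theta_N\colon \cA_N\to \cB(H)$ that is $K\e$-close to $\Psi\rs\cA_N$. Choosing the $\Theta_N$'s coherently, so that $\Theta_{N+1}$ extends $\Theta_N$ on the blocks below $N$ (exploiting the uniqueness of *-homomorphism approximations up to small inner perturbations), and letting $N\to\infty$, produces the desired global $\Theta$.

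\emph{Stage 3 (spatial implementation).} For each $n$ the restriction $\Theta\rs(\bP_{\{n\}}\calD[\vec E]\bP_{\{n\}})$ is a *-homomorphism from $M_{|E_n|}(\bbC)$ into the corner $\Theta(\bP_{\{n\}})\cB(H)\Theta(\bP_{\{n\}})$, and is therefore spatially implemented: there is a partial isometry $v_n$ with $v_n^*v_n=\bP_{\{n\}}$ and $v_nv_n^*=\Theta(\bP_{\{n\}})$. The range projections $\Theta(\bP_{\{n\}})$ are pairwise orthogonal since $\Theta(\bP_{\{n\}}\bP_{\{m\}})=0$ for $n\ne m$. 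Set $v=\sum_n v_n$ (strongly convergent); then $v^*v=I$, while $I-vv^*=I-\Theta(I)$ is a projection that must be compact because $\pi(\Theta(I))=\Phi(\pi(I))=1$, hence finite-rank. Thus $v$ is a partial isometry between cofinite-dimensional subspaces of $H$ with $\Psi_v$ representing $\Phi$ on $\calD[\vec E]$.

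The hardest step will be Stage 2, specifically upgrading the pointwise compact-valued multiplicative errors $\Psi(ab)-\Psi(a)\Psi(b)$ to uniformly norm-small errors on the unit ball of $\cA_N$, Borel-measurably, so that Theorem~\ref{T.approximate} applies. The available levers are Borel-measurability of $\Psi$, Pettis's theorem (Theorem~\ref{T.Pettis}), compactness of $\cU[\vec E]$, and the Borel structure of $\|\cdot\|_{\cK}$ from Lemma~\ref{L.T.3}. A secondary subtlety will be arranging coherence between successive $\Theta_N$'s during assembly, which should follow from a uniqueness-up-to-small-perturbation property of *-homomorphism approximations on finite-dimensional C*-algebras.
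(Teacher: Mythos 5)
There is a genuine gap, and it sits exactly where you predicted: Stage 2. Knowing that $\Psi(ab)-\Psi(a)\Psi(b)$ is compact for each pair $(a,b)$ gives $\|\Psi(ab)-\Psi(a)\Psi(b)\|_{\cK}=0$ pointwise, but Theorem~\ref{T.approximate} needs errors that are small in the \emph{operator norm}, and a compact error can have huge norm. Your proposed levers do not bridge this: a Borel map on the (compact) unit ball of $\cA_N$ has no uniform modulus, so "Borel-measurability together with finite-dimensionality provides uniformity" is not an argument; and even if one had a uniform tail bound $\|(I-\bfR_M)(\Psi(ab)-\Psi(a)\Psi(b))\|<\e$, compressing $\Psi$ into a matrix corner $\bfR_M\cB(H)\bfR_M$ creates \emph{new} multiplicative errors, since $\bfR_M\Psi(a)\bfR_M\,\bfR_M\Psi(b)\bfR_M$ differs from $\bfR_M\Psi(a)\Psi(b)\bfR_M$ unless the values of $\Psi$ approximately commute with $\bfR_M$. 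The paper's proof of Theorem~\ref{T.2} is devoted precisely to manufacturing this commutation: it first upgrades $\Psi$ to a representation $\Psi_1$ that is \emph{continuous} on a compact product of finite $\e_i$-nets (Claim~\ref{C.1}, a Baire-category argument), then runs a second Baire-category argument (Claim~\ref{C.stabilizer.0}) to find stabilizers $u_i$ and cut-down projections $\bfQ_{2i+1}$ for which conditions (a1)--(b2) hold, and only then defines the block maps $\Lambda''_j$. Moreover, the fact that their defects $\delta_j^k$ tend to $0$ is not a per-block estimate at all: it is proved by a gluing/diagonalization argument (via Claim~\ref{C.b-cpct}) in which putative counterexamples $b_j,c_j$ from infinitely many blocks are assembled into single operators $b,c\in\calD[\vec E]$ and the compactness of the \emph{one} error $\Psi_1(bc)-\Psi_1(b)\Psi_1(c)$ is invoked. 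None of this machinery is replaceable by the remark that $\|c\|_{\cK}=\lim_M\|(I-\bfR_M)c\|$.

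Two further points. Your coherence scheme ("$\Theta_{N+1}$ extends $\Theta_N$ by uniqueness up to small inner perturbations") is unsubstantiated and unnecessary: the paper instead takes a diagonal direct sum of the block *-homomorphisms over \emph{alternate} blocks, obtaining a representation only on $\calD_M[\vec E]$ for $M$ the union of the odd blocks, and recovers $\calD[\vec E]$ via Lemma~\ref{L.trivial.2} after coarsening so that $\#E_n$ is nondecreasing. And in Stage 3 your claim that each block map is implemented by a partial isometry with $v_nv_n^*=\Theta(\bP_{\{n\}})$ silently assumes the embedding of $M_{|E_n|}(\bbC)$ has multiplicity one; a priori $\Theta(\bP_{\{n\}})$ could have rank a proper multiple of $|E_n|$ (even infinite). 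The paper rules this out for all but finitely many blocks by using the Johnson--Parrott theorem \cite{JohPar} that the image of an atomic masa is maximal abelian in the Calkin algebra, and implements only those blocks, which suffices. So the skeleton (Ulam-stability block-by-block, then spatial implementation) matches the paper, but the two essential mechanisms — the stabilizer/continuity construction that makes the block maps norm-approximately multiplicative, and the masa argument that forces multiplicity one — are missing from your proposal.
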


\begin{proof}  By coarsening $\vec E$ we may assume the sequence $\# E_n$ is nondecreasing. Since $\vec E$ is fixed, we
write $\bP_A$ for $\bP^{\vec E}_A$. The proof proceeds by successively
constructing a sequence of representations, each one with more adequate
properties than the previous ones, until we reach one that is a *-homomorphism
between the underlying algebras. This is similar to the proofs in
\cite[\S1]{Fa:AQ}. Some of the arguments may also resemble those from
\cite{Arv:Notes}.

Let $\e_i=2^{-i}$. Fix a finite $\e_i$-dense in norm subset $\ba_i\subseteq
\cB(\SPAN\{e_i\mid i\in E_n\})_{\leq 1}$ containing the identity and zero. Note
that $\prod_{j=l}^{l+m}\ba_j$ is $2\e_l$-dense in
$\prod_{j=l}^{l+m}\cB(\SPAN\{e_i\mid i\in E_j\})_{\leq 1}$. Let $\bA=\prod_i
\ba_i$. We shall identify $a\in \ba_i$ with $\bar a\in \calD[\vec E]$ such that
$\bP_{\{i\}}\bar a=a$ and $(I-\bP_{\{i\}})\bar a=0$. For $J\subseteq \bbN$ and
$x\in \bA$ it will be convenient to write
 $x\rs J$ for the projection of $x$ to
 $\prod_{i\in J} \ba_i$, identified with  $\bP_Jx$.

\begin{claim}  \lbl{C.1} There is a strongly continuous representation $\Psi_1$ of $\Phi$ on $\bA$.
\end{claim}

\begin{proof}
 Since each $\ba_i$ is finite, the strong operator
topology on $\bA$ coincides with its Cantor-set topology which is compact
metric. Let $\cX\subseteq \bA$ be a dense $G_\delta$ set on which $\Psi$ is
continuous. Write $\cX$ as an intersection of dense open sets $U_n$, $n\in
\bbN$. Since each $\ba_i$ is finite, a straightforward diagonalization argument
produces an increasing sequence $(n_i)$ in $\bbN$, with $J_i=[n_i,n_{i+1})$,
$\bb_i=\ba_{J_i}=\prod_{k\in J_i} \ba_i$ and  $s_i\in \bb_{i}$ such that for
all $x\in \bA$ and all $i$ we have $x\rs J_i=s_i$ implies $x\in \bigcap_{j=0}^i
U_j$. Therefore $\{x\mid (\exists^\infty i) x\rs J_i=s_i\}\subseteq \cX$.

Let $C_0=\bigcup_{j\ \even} J_j$, $C_1=\bigcup_{j\ \odd}J_j$, $R_0=\bP_{C_0}$
and $R_1=\bP_{C_1}$,  let $S_0=\sum_{j\ \odd} s_j$ and let $S_1=\sum_{j\ \even}
s_j$. Note that $R_iu=uR_i=R_iuR_i$ for all $u\in \calD[\vec E]$ and $i\in
\{0,1\}$. For $u\in \bA$ let
\begin{enumerate}
\item [(*)] $\Psi_1(u)=\Psi(uR_0+S_0)-\Psi(S_0)+\Psi(uR_1+S_1)-\Psi(S_1)$.
\end{enumerate}
Then $\Psi_1$ is a continuous representation of $\Phi$ on $\bA$.
\end{proof}

Our next task is to find a representation $\Psi_2$ of  $\Phi$ on $\bA$ which is
stabilized (in a sense to be made very precise below) and then extend it to a
representation of $\Phi$ on $\calD[\vec E]$.
Start with $\Psi_1$ as provided by
Claim~\ref{C.1}. By possibly replacing $\Psi_1$ with
 $b\mapsto \Psi_1(b)\Psi_1(I)^*$, we may
assume $\Psi_1(I)=I$.

The sequence of projections $(\bfR_k)$ was fixed in \S\ref{S.Notation}.

\begin{claim}\lbl{C.stabilizer.0}
For all $n$ and $\e>0$ there are  $k>n$ and $u\in \prod_{i=n}^{k-1} \ba_i$ such
that for all $a$ and $b$ in $\bA$ satisfying  $a\rs [n,\infty)=b\rs [n,\infty)$
and $a\rs [n,k)=u$ we have
\begin{enumerate}
\item  $\|(\Psi_1(a)-\Psi_1(b))(I-\bfR_{k})\|\leq \e$ and
\item $\|(I-\bfR_{k})(\Psi_1(a)-\Psi_1(b))\|\leq \e$.
\end{enumerate}
\end{claim}

\begin{proof} Write $\bc=\prod_{i=0}^{n-1}\ba_i$. For $a\in \bA$ and $s\in \bc$
write $a[s]=s+\bP_{[n,\infty)}a$.
 For $k>n$ let
 \begin{align*}
 V_k=\{a\in \bA\mid (\exists s\in \bc)(\exists t\in \bc)
&\|(\Psi_1(a[s])-\Psi_1(a[t]))(I-\bfR_{k})\|>\e\\
\text{ or } &\|(I-\bfR_{k})(\Psi_1(a[s])-\Psi_1(a[t]))\|>\e\}.
\end{align*}
Since $\Psi_1$ is continuous,  each $V_k$ is an open subset of $\bA$.
 If $a\in \calD[\vec E]$, and $s$ and $t$ are in $\bc$ then
$\Psi_1(a[s])-\Psi_1(a[t])$ is compact and therefore
$\|(\Psi_1(a[s])-\Psi_1(a[t]))(I-\bfR_{k})\|\leq \e$ and
$\|(I-\bfR_{k})(\Psi_1(a[s])-\Psi_1(a[t]))\|\leq \e$ for a large
enough $k=k(a,s,t)$. Since $\bc$ is finite, for some large enough
$k=k(a)$ we have $a\notin V_k$. Therefore the $G_\delta$ set
$\bigcap_k V_k$ is empty. By the Baire Category Theorem, we may  fix
$l$ such that $V_l$ is not dense. There is a basic open set disjoint
from $V_l$. Since $a\in V_l$ if and only if $a[s]\in V_l$ for all $a$
and $s\in \bc$, for some $k\geq l$ there is
 a $u\in \prod_{i=n}^{k-1}\ba_i$ such that $\{a\in \bA\mid a\rs [n,k)=u\}$ is
 disjoint from $V_k$ (note that $V_k\subseteq V_l$).
Then $k$ and $u$ are as required.
\end{proof}

We shall  find two increasing sequences of natural numbers, $(n_i)$ (unrelated
to the one appearing in the proof of Claim~\ref{C.1}) and $(k_i)$ so that
$n_i<k_i<n_{i+1}$ for all $i$. These sequences will be chosen according to the
requirements described below. With $J_i=[n_i,n_{i+1})$ write
$\bb_i=\ba_{J_i}=\prod_{j\in J_i} \ba_j$.

Let $\e_i=2^{-i}$.  A $u_i\in \bb_i$ is an \emph{$\e_i$-stabilizer for
$\Psi_1$} (or a  \emph{stabilizer}) if for all $a,b$ in $\bA$ such that $a\rs
[n_i,n_{i+1})=b\rs [n_i,n_{i+1})=u_i$ the following hold.
\begin{enumerate}
\item [(a)] If  $a\rs [n_i,\infty)=b\rs [n_i,\infty)$ then
\begin{enumerate}
\item [(a1)] $\|(\Psi_1(a)-\Psi_1(b))(I-\bfR_{k_i})\|<\e_i$ and
\item[(a2)] $\|(I-\bfR_{k_i})(\Psi_1(a)-\Psi_1(b))\|<\e_i$.
\end{enumerate}
\item [(b)] If $a\rs [0,n_{i+1})=b\rs [0,n_{i+1})$ then
\begin{enumerate}
\item [(b1)] $\|(\Psi_1(a)-\Psi_1(b))\bfR_{k_i}\|<\e_i$ and
\item [(b2)] $\|\bfR_{k_i}(\Psi_1(a)-\Psi_1(b))\|<\e_i$.
\end{enumerate}
\end{enumerate}
We shall find $(n_i)$, $(k_i)$, $J_i$, $\bb_i$ as above and a stabilizer
$u_i\in\bb_i$ for all $i$. Assume all of these objects up to and including
$n_i$, $k_{i-1}$ and $u_{i-1}$ have been chosen to satisfy the requirements.
Applying Claim~\ref{C.stabilizer.0}, find $k_i\geq n_i$ and $u_i^0\in
\prod_{j=n_i}^{k_i-1}\ba_j$ such that (a1) and (a2) hold. Then apply the
continuity of $\Psi_1$ to find $n_{i+1}\geq k_i$ and $u_i\in
\prod_{j=n_i}^{n_{i+1}-1}\ba_j$  such that $u_i\rs [n_i,k_i)=u_i^0$ and  (b1)
and (b2) hold as well.

 Once the sequences
$n_{i+1}, k_i$ and  $u_i\in \bb_i=\prod_{j\in J_i}\ba_j$ are chosen,  let
$$
\cV_i=\textstyle\bigoplus_{j\in J_i}\cB(E_j).
$$
 Then $\calD[\vec E]=\prod_i
\cV_i$. We identify $\cV_j$ with $\bP_{J_i}\calD[\vec E]$ and
 $b\in \calD[\vec E]_{\leq 1}$ with the sequence $\langle b_j\rangle_j$
such that $b_j\in \cV_j$ and  $b=\sum_j b_j$. Let $I_j$ denote the
identity of $\cV_j$. Note that $I_j\in \bb_j$. Recall that $\bb_i$ is
$2\e_i$-dense in $(\cV_i)_{\leq 1}$ and fix a linear ordering of each
$\bb_i$. Define
$$
\sigma_i\colon \cV_i\to \bb_i
$$
by letting $\sigma_i(c)$ be the first
element of $\bb_i$ that is within $2\e_i$ of $c$. For $c\in \calD[\vec E]_{\leq
1}$ let 
\[
\textstyle c_{\even}=\sum_i \sigma_{2i}(c_{2i})\qquad\text{and}\qquad c_{\odd}=\sum_i
\sigma_{2i+1}(c_{2i+1}). 
\]
Both of these elements belong to $\bA$ and $c-c_{\even}
-c_{\odd}$ is compact.

Let us concentrate on $\cV_{2i+1}$. Define $\Lambda_{2i+1}\colon \cV_{2i+1}\to
\cB(H)$:
$$
\Lambda_{2i+1}(b)=\Psi_1(u_{\even} +\sigma_{2i+1}(b))-\Psi_1(u_{\even}).
$$
Since both $\sigma_i$ and $\Psi$ are  Borel-measurable, $\Lambda_{2i+1}$ is
Borel-measurable as well.  Let $\bfQ_i=\bfR_{k_{i+1}}-\bfR_{k_{i-1}}$, with
$k_{-1}=0$.

\begin{claim} \lbl{C.b-cpct}
For $b\in \calD[\vec E]_{\leq 1}$
 such that $b_{2i}=0$ for all $i$ the operator
$\Psi_1(b)-\sum_{i=0}^\infty \bfQ_{2i+1}\Lambda_{2i+1}(b_{2i+1})\bfQ_{2i+1}$ is
compact. In particular the latter operator is bounded. 
\end{claim}

\begin{proof}
Since $b-b_{\odd}$  is compact, so is
$\Psi_1(b)+\Psi_1(u_{\even})-\Psi_1(u_{\even} +b_{\odd})$. By applying (a1) and (b1)
to $b_{\odd}$, $b^+=\sum_{j=i}^\infty \sigma_{2j+1}(b)$ and $\sigma_{2i+1}(b_{2i+1})$ we see that
\begin{align*}
\|(\Lambda_{2i+1}(b_{2i+1})+\Psi_1&(u_{\even})-\Psi_1(u_{\even}
+b_{\odd}))\bfQ_{2i+1}\|\\
=&\|\Psi_1((u_{\even}+\sigma_{2i+1}(b))-\Psi_1(u_{\even}+b_{\odd}))\bfQ_{2i+1})\|\\
\leq &\|(\Psi_1(u_{\even}+\sigma_{2i+1}(b))-\Psi_1(u_{\even}+b^+))\bfQ_{2i+1}\|\\
& +
\|(\Psi_1(u_{\even}+b^+)-\Psi_1(u_{\even}+b_{\odd}))\bfQ_{2i+1}\|\\
<&2\e_{2i+1}.
\end{align*}
 Since $\sum_i(\e_{2i+1})^2<\infty$ and $I-\sum_i \bfQ_{2i+1}$
 is a compact operator, the operator
$\Psi_1(u_{\even} +b_{\odd})-\Psi_1(u_{\even})-\sum_{i=0}^\infty
\Lambda_{2i+1}(b_{2i+1})\bfQ_{2i+1}$ is compact. An analogous proof using (a2)
and (b2) instead of (a1) and (b1) gives that $\Psi_1(u_{\even}
+b_{\odd})-\Psi_1(u_{\even})-\sum_{i=0}^\infty
\bfQ_{2i+1}\Lambda_{2i+1}(b_{2i+1})\bfQ_{2i+1}$ is compact.
\end{proof}

Define $\Lambda_{2i+1}'\colon \cV_{2i+1}\to \cB(H)$ by
$$
\Lambda_{2i+1}'(b)=\bfQ_{2i+1}\Lambda_{2i+1}(b)\bfQ_{2i+1}.
$$
With $a_{2i+1}=\Lambda_{2i+1}'(I_{2i+1})$ let $
\e_i=\max(\|a_{2i+1}^2-a_{2i+1}\|,\|a_{2i+1}^*-a_{2i+1}\|)$.  We claim that
$\limsup_i \e_i=0$. Assume not and find $\e>0$ and an infinite $M\subseteq
2\bbN+1$ such that for all $i\in M$ we have
$\max(\|a_i^2-a_i\|,\|a_i^*-a_i\|)>\e$.    With
 $a=\sum_{i\in M} a_i$ the operator
$\Psi_1(\bP_M)-a$ is compact, thus $a^*-a$ and $a^2-a$ are both compact. Since
$a_i=\bQ_ia\bQ_i$ and $\bQ_i\bQ_j=0$ for distinct $i$ and $j$ in $M$, we have
$a^*=\sum_{i\in M} a_i^*$ and $a^2=\sum_{i\in M} a_i^2$. By the choice of $M$
and $\e$ at least one of $a-a^*$ and $a^2-a$  is not compact, a contradiction.

Applying Lemma~\ref{L.approximate.projection} to $a_{2i+1}$ such that $\e_{i}$
is small enough, obtain projections $\bfS_{2i+1}\leq \bQ_{2i+1}$ such that
$\limsup_{i\to\infty} \|\bfS_{2i+1}-\Lambda_{2i+1}'(I_{2i+1})\|=0$. With
Lemma~\ref{L.trivial} in mind, we shall ignore all the even-numbered
$\cV_i$~and~$\Lambda_i$. Let
$$
\Lambda_{i}''(a)=\bfS_{2i+1}\Lambda_{2i+1}'(a)\bfS_{2i+1}
$$
for $a\in \cV_{2i+1}$ and let $\bfS''_i=\bfS_{2i+1}$ and $\cV''_i=\cV_{2i+1}$
for all $i$.

Then $ \Lambda''(a)=\sum_i  \Lambda''_i(a_i)$ is a representation of $\Phi$ on
$\bigoplus_i \cV''_{i}$.
  For $j\in \bbN$ let
 \begin{align*}
 \delta_j^0&=\sup_{a,b\in (\cV''_j)_{\leq
 1}}\| \Lambda''_j(ab)- \Lambda''_j(a) \Lambda''_j(b)\|,\\
 \delta_j^1&=\sup_{a,b\in (\cV''_j)_{\leq
 1}}\| \Lambda''_j(a+b)- \Lambda''_j(a)- \Lambda''_j(b)\|,\\
   \delta_j^2&=\sup_{a\in (\cV''_j)_{\leq
 1}}\| \Lambda''_j(a^*)- \Lambda''_j(a)^*\|,\\
\delta_j^4&=\sup_{a\in (\cV''_j)_{\leq
 1}} |\|a\|-\| \Lambda''_j(a)\||.
\end{align*}
 We claim that   $\lim_j \max_{0\leq k\leq 4}\delta_j^k=0$.
 We shall prove only $\lim_j \delta_j^0=0$
 since the other proofs are similar.
Assume the limit is nonzero, and for each $j$ fix  $b_j$ and $c_j$ in
$(\cV_{j}'')_{\leq 1}$ such that $\| \Lambda''_{j}(b_jc_j)- \Lambda''_{j}(b_j)
\Lambda''_{j}(c_j)\|\geq \delta^0_j/2$ for all~$j$. Let $b$ and $c$ in
$\cB[\vec E]_{\leq 1}$ be such that $\bP_{J_j}b=b_j$  and $\bP_{J_j}c=c_j$ for
all $j$. Then  $\Psi_1(bc)-\Psi_1(b)\Psi_1(c)$ is compact. By
Claim~\ref{C.b-cpct}, so is $\sum_j
 \Lambda''_{j}(b_jc_j)- \Lambda''_{j}(b_j) \Lambda''_{j}(c_j)$.
This implies  $\lim_j \delta^0_j=0$, a contradiction.

Each  $ \Lambda_j''$ is a $2\delta_j$-approximate
*-homomorphism as defined in \S\ref{S.approximate}. Since   $\lim_j 2\delta_j=0$ and each $ \Lambda''_j$ is
Borel-measurable, by applying Theorem~\ref{T.approximate} to $
\Lambda_j''$ for $j$ larger than some $n_0$  we find a
$2K\delta_j$-approximation to $ \Lambda_j''$ which is a unital
*-homomorphism, $\Xi_i\colon \calD_{2i+1}\to \cB( \bfS''_i[H])$. For
 $i\leq n_0$ let $\Xi_i$ be identically equal to 0.
Since $\lim_j 2K\delta_j=0$ and $ \bfS''_i$ are pairwise orthogonal, the
diagonal $\Xi$ of $\Xi_i$  is a
*-homomorphism and a representation of $\Phi$ on $\calD_{\bigcup_{i\text{ odd}}J_i}[\vec E]$.

Still ignoring the even-numbered $\cV_j$'s, we address the second part of
Theorem~\ref{T.2} by showing $\Phi$ is inner on $\calD_{\bigcup_{i\text{
odd}}J_i}[\vec E]$.  Let $F_i=\bP_{J_i}[H]$ and $G_i=\bfS''_i[H]$.

\begin{claim} For all but finitely many $i$ there is a linear
isometry $v_i\colon F_i\to G_i$ such that $\Xi_i(a)=v_iav_i^*$ for all $a\in
\calD[(E_j)_{j\in J_i}]$.
\end{claim}

\begin{proof} Let $\xi_n$, for $n\in \bbN$, 
 be an orthonormal sequence such that
each $\xi_n$ belongs to some $F_i$ and no two $\xi_n$ belong to the same $F_i$.
Let $P=\proj_{\SPAN\{\xi_n\mid n\in \bbN\}}$ and consider the masa $\cA$ of
$\cB(P[H])$ consisting of all operators diagonalized by $\xi_n$, for $n\in \bbN$. The image
under the quotient map of $\cA$ in the Calkin algebra $\cC(P[H])$  is a masa
(\cite{JohPar}). It is contained in the domain of $\Xi$. The image of the
$\Xi$-image of $\cA$  is a masa in  $\cC(\Xi(P)[H])$. Because of this, for all
but finitely many $n$ the projection $\Xi(\proj_{\bbC\xi_n})$ has rank $1$.
Since $(\xi_n)$ was arbitrary, for all but finitely many $n$ and all
one-dimensional projections $R\leq \proj_{F_n}$ the rank of $\Xi(R)$ is equal
to 1. Fix such $n$ and a basis $(\eta_j\mid j<\dim(E_n))$ of $F_n$. Let
$P_j=\Xi(\proj_{\bbC\eta_j})$. For all but finitely many $n$ we have
$\sum_{j<\dim(F_n)}P_j=\proj_{G_n}$. Consider $n$ large enough for this to
hold. Fix a unit vector $\xi_0$ in the range of $P_0$. Let $a\in \cU(F_n)$ be
generated by a cyclic permutation of $\{\eta_j\}$, so that
$a(\eta_j)=\eta_{j+1}$ (with $\eta_{\dim(F_n)}=\eta_0$). With $b=\Xi(a)$ let
$\xi_j=b^j(\xi_0)$ (here $b^j$ is the $j$-th power of $b$). Then $(\xi_j)$ form
a basis of $G_n$. It is clear that $\eta_j\mapsto \xi_j$ defines an isometry
$v_n$ as required.
\end{proof}

For a large enough $m$ the sum $v=\bigoplus_{n=m}^\infty v_n$ is a partial
isometry from $\bigoplus_{n=m}^\infty F_n$ to $\bigoplus_{n=m}^\infty G_n$ such
that $\Xi(a)-vav^*$ has finite rank for all $a\in \calD[\vec E]$. Lemma~\ref{L.trivial.2} implies $\Phi$ is
inner on $\calD[\vec E]$.
\end{proof}

\begin{proof}[Proof of Theorem~\ref{T.3}]  Fix an automorphism $\Phi$ of  $\cC(H)$ with
a C-measurable representation.
 By Lemma~\ref{L.guess} (2)  we may assume
 $\Phi$ has a Borel-measurable representation $\Psi$.
 Let $B\subseteq \cB(H)_{\leq 1}\times \cB(H)_{\leq
1}$ be the set of all pairs $(a,b)$ such that $\Psi(b)-aba^*$ is not
compact. Then the assertion of Theorem~\ref{T.3} is equivalent to
$(\exists a)(\forall b)(a,b)\notin B$.  Lemma~\ref{L.T.3} implies $B$
is Borel and therefore by Theorem~\ref{T.Shoenfield} we may use TA in
the proof of Theorem~\ref{T.3}.

By Theorem~\ref{T.2}, $\Phi$ is inner on $\calD[\vec E]$ for each
finite-dimensional decomposition $\vec E$ of $H$. By Theorem~\ref{P.1},
$\Phi$ is inner.
\end{proof}

\section{Locally inner automorphisms}
\lbl{S.Local}

Fix an automorphism $\Phi$ of $\cC(H)$. Proposition~\ref{P.2} below  is roughly
mo\-de\-led on the proof of \cite[Proposition~3.12.1]{Fa:AQ}. 
Its main components are Lemma~\ref{L.cJ-m},  
Proposition~\ref{L.sigma}, and Theorem~\ref{T.2}. 
The key device in the proof of Lemma~\ref{L.cJ-m} is the partition defined
in (K1)--(K3). It is a descendant of Velickovic's partition (\cite{Ve:OCA}) 
and the partitions used in \cite[p. 100]{Fa:AQ}. 

If $u$ is a
partial isomorphism we write $\Psi_u$ for the conjugation, $\Psi_u(a)=uau^*$.
Fix a partition $\vec E$ of $\bbN$ into finite intervals such that the sequence
$\# E_n$ is nondecreasing.

\begin{prop} \lbl{P.2} TA implies
$\Phi$ is inner on $\calD[\vec E]$.
\end{prop}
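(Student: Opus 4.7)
The plan is to follow the outline given after the statement of Theorem~\ref{T.0}: use TA to locate an infinite $M\subseteq\bbN$ on which $\Phi$ admits arbitrarily fine C-measurable approximations on $\calD_M[\vec E]$, assemble these approximations into a single C-measurable representation on $\calD_M[\vec E]$, and then invoke Theorem~\ref{T.2} to upgrade this representation to an inner conjugation. Concretely, for each $n\in\bbN$ I would set
\[
\cJ^n(\vec E)=\{A\subseteq\bbN : \Phi \text{ has a Borel-measurable } 2^{-n}\text{-approximation on } \calD_A[\vec E]_{\leq 1}\}.
\]
The heart of the argument, the anticipated Lemma~\ref{L.cJ-m}, is the assertion that $\bigcap_n \cJ^n(\vec E)$ contains an infinite set $M$.

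To prove the lemma I would argue by contradiction. Assuming that the intersection contains no infinite set, code potential obstructions to $A\in\cJ^n(\vec E)$ by an $\aleph_1$-indexed family $X$ of pairs $(A_\xi,f_\xi)$ and impose on $[X]^2$ an open coloring $K_0\cup K_1$ of the shape labelled (K1)--(K3) in the paper, which is a direct descendant of Velickovic's coloring from \cite{Ve:OCA} and the colorings of \cite[p.~100]{Fa:AQ}. The design should be such that an uncountable $K_0$-homogeneous set produces an uncountable family of pairwise $2^{-n}$-incompatible lifts, contradicting the fact that $\Phi$ is a $*$-homomorphism modulo $\cK(H)$; while a cover of $X$ by countably many $K_1$-homogeneous sets permits, by pigeonholing in the spirit of Lemma~\ref{L.NupN}(1), a uniform Borel $2^{-n}$-approximation on some infinite $A\subseteq\bbN$, again contradicting $A\notin\cJ^n(\vec E)$. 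A diagonal $\sigma$-directedness argument across $n$ then produces the desired single $M\in\bigcap_n \cJ^n(\vec E)$.

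Once such an $M$ is in hand, the remainder of the proof is essentially automatic. Lemma~\ref{L.guess}(2) converts the graph-covering C-measurable $2^{-n}$-approximations to genuine Borel-measurable $8\cdot 2^{-n}$-approximations $\Xi_n$ on $\calD_M[\vec E]_{\leq 1}$; Lemma~\ref{L.T.3} identifies the set of pairs $(a,b)$ with $\|\Xi_n(a)-b\|_{\cK}\leq 2^{-n+1}$ for every $n$ as Borel, so Lemma~\ref{L.n} (via the Jankov--von~Neumann uniformization, Theorem~\ref{T.JvN}) provides a C-measurable representation of $\Phi$ on $\calD_M[\vec E]$. Since $\#E_n$ is nondecreasing, the partition $(E_i)_{i\in M}$ is still an FDD of $\bP_M H$ of the kind handled by Theorem~\ref{T.2}, and that theorem then supplies a partial isometry $v$ between cofinite-dimensional subspaces such that $\Psi_v$ represents $\Phi$ on $\calD_M[\vec E]$. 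In other words, $\Phi$ is inner on $\calD_M[\vec E]$, and Lemma~\ref{L.trivial.2} immediately upgrades this to innerness on all of $\calD[\vec E]$.

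The main obstacle is the construction of the coloring in Lemma~\ref{L.cJ-m}: one must arrange the open partition so that TA genuinely delivers a usable dichotomy, with $K_0$-homogeneity refuting the hypothesis that $\Phi$ descends to a $*$-homomorphism on the quotient and $K_1$-homogeneity yielding a Borel-measurable $2^{-n}$-approximation by a countable gluing procedure. The difficulty lies in phrasing "$2^{-n}$-approximation" — an inherently non-commutative, operator-algebraic condition — in a form to which the $\sigma$-directed combinatorics of $\cP(\bbN)$ modulo finite and Todorcevic's open-coloring dichotomy can be applied; once the coloring is isolated, the measurable-to-inner passage in the last paragraph is black-boxed by the machinery of Section~\ref{S.Representation} and Theorem~\ref{T.2}.
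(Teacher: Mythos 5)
Your outline captures the broad architecture (the families $\cJ^n(\vec E)$, TA via an open coloring, Lemma~\ref{L.n}, then Theorem~\ref{T.2} and Lemma~\ref{L.trivial.2}), and the terminal measurable-to-inner passage is indeed black-boxed correctly. But there is a genuine gap in the middle that your proposal passes over as though it were automatic.

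The coloring argument under TA cannot deliver what you assert. You write that a cover of $\cX$ by countably many $K_1$-homogeneous sets ``permits, by pigeonholing in the spirit of Lemma~\ref{L.NupN}(1), a uniform Borel $2^{-n}$-approximation on some infinite $A$'' and therefore that Lemma~\ref{L.cJ-m} says $\bigcap_n\cJ^n(\vec E)$ contains an infinite set. That is not what TA gives. Each $K_1$-homogeneous piece $\cX_p$ only supplies one partial approximation (after extracting a countable dense subset $\bfD_p$ and running a Jankov--von~Neumann uniformization on the auxiliary Borel set $\cY(\bar n,p)$), so TA produces a \emph{countable family} of Borel maps whose graphs together cover a $2^{-n}$-approximation on $\calD_M[\vec E]$ --- i.e., membership in the auxiliary family $\cJ^n_\sigma(\vec E)$, not $\cJ^n(\vec E)$. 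This is why the paper introduces $\cJ^n_\sigma(\vec E)$ at all, and why Lemma~\ref{L.cJ-m} is phrased as a statement about tree-like almost disjoint families of $\cJ^k_\sigma$-positive sets (the tree-like structure is what lets one conclude that in the $2^{\aleph_0}$-sized family $\{M_x\}_{x\in 2^{\bbN}}$ all but countably many branches are $\cJ^n_\sigma$-positive, simultaneously for every $n$). Your invocation of Lemma~\ref{L.guess}(2) does not repair this: that lemma only upgrades a countable family of C-measurable covers to a countable family of Borel ones; it does not collapse the family to a single map.

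The missing ingredient is Proposition~\ref{L.sigma}: if a disjoint union $M=\bigcup_i M_i$ is in $\cJ^n_\sigma(\vec E)$, then some $M_i$ lies in $\cJ^{n-2}(\vec E)$. This is the step that converts ``countably many partial approximations'' into ``one genuine approximation on an infinite sub-block,'' and it is not a pigeonhole; it is a measure-theoretic argument on the compact groups $\cV_i=\prod_{j\geq i}\cU_{M_j}[\vec E]$ using Haar measure, Fubini and Lebesgue density, the large-section Borel uniformization (Theorem~\ref{T.U.Large.Measure}), and Lemma~\ref{L.F.B1} to expand a positive-measure approximation to all of $\cU_{M_i}[\vec E]$. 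Without it, one cannot build the decreasing chain $M_0\supseteq M_1\supseteq\cdots$ with $M_j\in\cJ^j(\vec E)$ from which the final diagonal set $M$ is extracted, and your ``diagonal $\sigma$-directedness argument across $n$'' has nothing to diagonalize against. In short: the TA-to-approximation step in your proposal skips an entire layer of the argument, and that layer is where the real work of Section~\ref{S.Local} lives.
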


Using the Axiom of Choice, find a representation $\Psi\colon \cB(H)\to \cB(H)$
of $\Phi$. It is not assumed that $\Psi$ is C-measurable or that it is a
homomorphism, but we may assume $\Psi(P)$ is a projection whenever $P$ is a
projection. This is because every projection in the Calkin algebra is the image
of some projection in $\cB(H)$ via the quotient map
  (\cite[Lemma~3.1]{We:Set}).  We may also assume
$\|\Psi(a)\|\leq \|a\|$ for all $a$: find the polar decomposition of
$\Psi(a)$, apply the spectral theorem to its positive part, and
truncate the function to~$\|a\|$.

For $M\subseteq \bbN$ let  $\cU_M[\vec E]$ denote the unitary group of  $\calD_M[\vec E]$ and let
 \begin{align*}
 \cJ^n(\vec E)=\{M\subseteq \bbN\mid &\text{there is a Borel-measurable $\Xi\colon
\cU_M[\vec E]\to \cB(H)$}\\
&(\forall a\in \cU_M[\vec E])\|\Phi(\pi(a))-\pi(\Xi(a))\|\leq 2^{-n}\}, \\
\cJ^n_\sigma(\vec E)=\{M\subseteq \bbN\mid &\text{ there are Borel-measurable
 $\Psi_i\colon \cU_M[\vec E]\to \cB(H)$, $i\in \bbN$}\\
 &(\forall a\in \cU_M[\vec E])(\exists i)
\|\Phi(\pi(a))-\pi(\Psi_i(a))\|\leq 2^{-n}\}.
\end{align*}
In the terminology of \S\ref{S.epsilon}, $\Xi$ is a $2^{-n}$-approximation to
$\Phi$ on $\cU_M[\vec E]$.
 Each $\cJ^n(\vec E)$ and each $\cJ^n_\sigma(\vec E)$ is
hereditary and closed under finite changes of its elements, but these sets are
not necessarily closed under finite unions.

 Given $\vec E=(E_n)_{n=0}^\infty$
write $F_n=\SPAN\{e_i\mid i\in E_n\}$ and $\bP^{\vec E}_A$ for the projection
to $\bigoplus_{n\in A} F_n$. While $\vec E$ is fixed we shall drop the
superscript and write~$\bP_A$.
 A family of
subsets of $\bbN$ is \emph{almost disjoint} if $A\cap B$ is finite for all
distinct $A$ and $B$ in the family.
 An almost disjoint family $\cA$ is \emph{tree-like} if there is a partial ordering $\preceq$
of $\bbN$ such that $(\bbN,\preceq)$ is isomorphic to $(2^{<\bbN},\subseteq)$
and each element of $\cA$ is a maximal branch of this tree. If $J_s$ ($s\in
2^{<\bbN}$) are pairwise disjoint finite subsets of $\bbN$ and $X\subseteq
2^{\bbN}$, then the family of all $M_x=\bigcup_n J_{x\rs n}$, $x\in X$, is
tree-like, and every tree-like family is of this form.

\begin{lemma} \lbl{L.cJ-m}  TA implies that for every $k$
every tree-like family of $\cJ^k_\sigma(\vec E)$-positive sets is at most
countable.
\end{lemma}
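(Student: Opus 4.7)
Assume for contradiction that $\{M_x : x\in X\}$ is an uncountable tree-like family of $\cJ^k_\sigma(\vec E)$-positive sets, indexed by an uncountable $X\subseteq 2^{\bbN}$, with common stems $M_x\cap M_y$ finite for all distinct $x,y$. Fix the representation $\Psi$ of $\Phi$ introduced before the statement of Lemma~\ref{L.cJ-m}. Enumerate a countable norm-dense subgroup $\cU_0$ of the unitary group of $\cB(H)$ and let $\cF_0$ be the countable collection of Borel-measurable candidate maps obtained by the conjugations $a\mapsto uau^*$ for $u\in \cU_0$ (restricted to the various $\cU_{M_x}[\vec E]$). Using $\cJ^k_\sigma(\vec E)$-positivity of each $M_x$, pick a witnessing unitary $a_x\in \cU_{M_x}[\vec E]$ with $\|\Phi(\pi(a_x))-\pi(T(a_x))\|>2^{-k}$ for every $T\in \cF_0$.

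The plan is to define an open coloring $[X]^2=K_0\cup K_1$ in an auxiliary separable metric topology $\tau$ on $X\times \cB(H)_{\leq 1}$ capturing ``finite-rank incompatibility'' of the witnesses: place $\{x,y\}$ in $K_0$ when there exist a finite-rank projection $Q$ supported on the common stem of $x$ and $y$ (together with a short initial interval of $\bbN$) and a rational $\delta>2^{-k+2}$ with $\|Q(\Psi(a_x)-\Psi(a_y))Q\|>\delta$. This coloring is $\tau$-open because the offending $Q$ lives in a finite-rank corner determined by finitely many bits of the indexing data. By TA, either there is an uncountable $K_0$-homogeneous $Y\subseteq X$, or $X$ is covered by countably many $K_1$-homogeneous sets. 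The former alternative is impossible, because a $K_0$-homogeneous uncountable set would yield uncountably many pairwise $\delta$-separated points $(\Psi(a_x))_{x\in Y}$ in a fixed finite-rank strongly compact corner of $\cB(H)_{\leq 1}$; hence we are left with a countable $K_1$-cover, and by the pigeonhole some $K_1$-homogeneous piece $Y$ is uncountable.

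On this $K_1$-homogeneous piece the tree-like structure guarantees that the witnesses $(a_x)_{x\in Y}$ are coherent modulo $2^{-k}$ on all finite common stems. Combining this coherence with Theorem~\ref{T.JvN} and the measurable-selector machinery of Lemma~\ref{L.guess}, one assembles a Borel-measurable $2^{-k}$-approximation to $\Phi$ on $\cU_{M_x}[\vec E]$ for some $x\in Y$, contradicting $\cJ^k_\sigma(\vec E)$-positivity of $M_x$. The main obstacle is fine-tuning the coloring and the amalgamation: the definition of $K_0$ must simultaneously be (i) open in a genuine separable metric topology, (ii) incompatible with an uncountable $K_0$-homogeneous set via a compactness argument, and (iii) fine enough that $K_1$-homogeneity, together with the stem structure and the countable family $\cF_0$, actually yields a Borel-measurable $2^{-k}$-approximation on some $\cU_{M_x}[\vec E]$. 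This coloring is a descendant of Velickovic's partition from \cite{Ve:OCA} and the partition on page~100 of \cite{Fa:AQ} referenced in the opening of \S\ref{S.Local}, and the delicate bookkeeping with the tree-like structure is the technical crux of the argument.
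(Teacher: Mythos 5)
There is a genuine gap, and it is structural. Your coloring lives on the index set of branches, with a single witness $a_x$ chosen for each positive set $M_x$, and the witnesses are chosen only to defeat one fixed countable family $\cF_0$ of inner conjugations. This misuses positivity in both directions. First, $M_x\notin\cJ^k_\sigma(\vec E)$ means that \emph{no} countable family of Borel-measurable maps has graphs covering a $2^{-k}$-approximation on $\cU_{M_x}[\vec E]$; to reach a contradiction you must eventually produce such a countable family covering an approximation on the \emph{entire} group $\cU_{M_x}[\vec E]$, and defeating the particular family $\cF_0$ at one point $a_x$ neither uses nor sets up this. Second, and decisively, $K_1$-homogeneity among the single chosen witnesses $(a_x)$ of \emph{distinct} branches gives coherence of countably many fixed operators, but no mechanism whatsoever for computing, measurably, an approximate value of $\Psi(a)$ for an \emph{arbitrary} $a\in\cU_{M_x}[\vec E]$. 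In the paper the coloring is therefore defined on the space $\cX$ of \emph{all} pairs $(S,a)$ with $S$ an infinite subset of a branch and $a\in\cU_S[\vec E]$; after TA covers $\cX$ by countably many $K_1$-homogeneous sets, one fixes countable dense subsets $\bfD_p$ of these pieces, and for any branch $B$ avoiding the countably many branches mentioned in $\bigcup_p\bfD_p$ one uses the clause $B(S)\neq B(T)$ to read off, from witnesses living on \emph{other} branches, Borel sets $\cY(\bar n,p)$ whose Jankov--von Neumann uniformizations give the required countable covering family on $\cU_{B_0}[\vec E]$ (after splitting $B$ into $B_0$ and $B_1$ along a fast sequence of intervals). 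Note also that the conclusion is obtained for all but countably many branches directly; no uncountable $K_1$-homogeneous piece is needed or used, so your pigeonhole step is aimed at the wrong target.

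Two further points. Your argument that there is no uncountable $K_0$-homogeneous set does not work as stated: the finite-rank projection $Q$ in your $K_0$ depends on the pair (it is supported on the common stem of $x$ and $y$), so homogeneity does not place uncountably many $\delta$-separated points in a single compact finite-rank corner. The paper's Claim~\ref{C.K-n-0} instead builds a fusion element $c=\sum_{i\in M}a_i\bP_{\{i\}}$, uses that $\Psi$ is a representation to see that $\Psi(c)\Psi(\bP_S)-\Psi(a)$ is compact for each member, and then refines by the separability of $\cK(H)$ and of the range of a fixed finite-rank projection to find a pair violating (K3). Finally, the step you defer to ``delicate bookkeeping'' -- converting $K_1$-homogeneity plus countable dense data into a $\sigma$-covering family of measurable maps on a whole $\cU_{B_0}[\vec E]$ (the relations $\sim_m$, the choice of the intervals $[m(2j),m(2j+1))$, the sets $\cY_{a,p}$, and Claim~\ref{C7.5}) -- is precisely the content of the lemma; without it the proposal does not contain a proof.
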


\begin{proof}
Fix an uncountable tree-like family $\cA$ and a partial  ordering
$\preceq$ on $\bbN$ such that $(\bbN,\preceq)$ is isomorphic to
$(2^{<\bbN},\subseteq)$ and all elements of $\cA$ are maximal
branches in $(\bbN, \preceq)$.  Let
$$
\cX=\{(S,a)\mid\text{$S$ is infinite and } (\exists
B(S)\in\cA)(S\subseteq B(S) \text{ and } a\in \cU_S[\vec E])\}.
$$
Note that $(S,a)\in \cX$ implies $\bP_Sa=a\bP_S=\bP_Sa\bP_S=a$. Also,
for $i\in S$  we have that $\bP_{\{i\}}a\in \cB(F_i)$. If moreover
$(T,b)\in \cX$, then $\bP_S\bP_T=\bP_{S\cap T}$  and for each $i$ we
have $(a-b)\bP_{\{i\}}=\bP_{\{i\}}(a-b)=\bP_{\{i\}}(a-b)\bP_{\{i\}}$.

Modify $\Psi$ as follows. If $a\in \calD_B[\vec E]\setminus\cK(H)$
for some $B\in \cA$ then replace $\Psi(a)$ with
$\Psi(\bP_B)\Psi(a)\Psi(\bP_B)$. Since $a$ is not compact such $B$ is
unique and since $\bP_Ba\bP_B=a$ the modified $\Psi$ is a
representation of $\Phi$ which satisfies $\|\Psi(a)\|\leq \|a\|$ for
all $a$ and $\Psi(a)\Psi(\bP_B)=\Psi(\bP_B)\Psi(a)$ for $a$ and $B$
as above.

Fix $n\in \bbN$. Define a partition $[\cX]^2=K^{n}_0\cup K^{n}_1$ by
letting $\{(S,a),(T,b)\}$ in $K^{n}_0$ if and only if the following
three conditions hold
\begin{enumerate}
\item [(K1)] $B(S)\neq B(T)$,
\item [(K2)] for each $i\in S\cap T$  we have $\|(a-b)\bP_{\{i\}}\|<2^{-i}$.
\item [(K3)] $\|\Psi(a)\Psi(\bP_T)-\Psi(\bP_S)\Psi(b)\|>2^{-n}$ or\\
 $\|\Psi(\bP_T)\Psi(a)-\Psi(b)\Psi(\bP_S)\|>2^{-n}$.
\end{enumerate}
The definition is clearly symmetric.  Consider $\cP(\bbN)$ with
the Cantor-set topology (\S\ref{S.P(N)}) and $\cB(H)_{\leq 1}$ with the strong
operator topology.

\begin{claim} \lbl{C0} The coloring  $K^{n}_0$ is open in the
topology on $\cX$ obtained by identifying  $(S,a)$ with
$(B(S),S,a,\Psi(\bP_S),\Psi(a))\in \cP(\bbN)^2\times (\cB(H)_{\leq 1})^3$.
\end{claim}

\begin{proof}  Assume the pair $(S,a),(T,b)$ satisfies (K1). Since $S$ and $T$
are infinite subsets of disjoint branches of $(\bbN,\preceq)$, their
intersection is finite and we may fix  $s\in S\cap (B(S)\setminus B(T))$ and
$t\in T\cap (B(T)\setminus B(S))$. Then $U=\{(S',a')\mid s\in S'\}$ and
$V=\{(T',b')\mid s\in T'\}$ are open neighborhoods of $(S,a)$ and $(T,b)$ and
any pair in $U\times V$ satisfies (K1).

 We shall show (K2) is open relatively to (K1). Fix $(S,a)$ and $(T,b)$
satisfying (K1) and (K2) and $U$, $V$ as above. Let $U'=\{(S',a')\mid (\forall
r\preceq s) r\in S'$ if and only if $r\in S\}$ and $V'=\{(T',b')\mid (\forall
r\preceq t) r\in T'$ if and only if $r\in T\}$. These two sets are open and for
$(S',a')\in U'$ and $(T',b')\in V'$ we have $S'\cap T'=S\cap T$.
 For each $i$ in this intersection
  $\bP_{\{i\}}$ has finite rank and in a finite-dimensional space the norm topology
coincides with the strong operator topology, therefore (K2) is open on $\cX$
modulo (K1).

It remains to prove   (K3) is open. Assuming the pair $\{(S,a),(T,b)\}$
satisfies one of the alternatives of (K3) (without a loss of generality, the
first one) one only needs to fix a unit vector $\xi$ such that
$\|(\Psi(a)\Psi(\bP_T)-\Psi(\bP_S)\Psi(b))\xi\|>2^{-n}$; this defines an open
neighborhood consisting of pairs satisfying (K3).
\end{proof}

\begin{claim} \lbl{C.K-n-0} There are no uncountable $K^n_0$-homogeneous sets  for any $n$.
\end{claim}

\begin{proof}
Assume the contrary. Fix $n\in \bbN$ and an uncountable
$K^n_0$-ho\-mo\-ge\-ne\-ous~$\bfH$.
 For  $i\in M=\bigcup_{(S,a)\in \bfH} S$ fix $(S_i,a_i)\in
\bfH$ such that $i\in S_i$ and  let $c=\sum_{i\in M} a_i\bP_{\{i\}}$. Then
$c\in \calD_M[\vec E]_{\leq 1}$ and
$\|(c-a)\bP_{\{i\}}\|=\|(a_i-a)\bP_{\{i\}}\|<2^{-i}$ for all $(S,a)\in \bfH$.
For $(S,a)\in \bfH$ we have $M\supseteq S$ and the operator $\bP_Sc-a=c\bP_S-a$
is compact. Therefore, the operators $\Psi(c)\Psi(\bP_S)-\Psi(a)$ and
$\Psi(\bP_S)\Psi(c)-\Psi(a)$ are in $\cK(H)$. There is a finite-dimensional
projection $\bfR=\bfR(S,a)$ such that
$\|(I-\bfR)(\Psi(c)\Psi(\bP_S)-\Psi(a))\|<2^{-n-2}$ and
$\|(I-\bfR)(\Psi(\bP_S)\Psi(c)-\Psi(a))\|<2^{-n-2}$. Since $\Psi(\bP_S)$ is a
projection, we may choose $\bfR$ so that $\bfR\Psi(\bP_S)=\Psi(\bP_S)\bfR$.

Let $\delta=2^{-n-4}$.  By the separability of $\cK(H)$ there is a projection
$\bbfR$ and an uncountable $\bfH'\subseteq \bfH$ such
$\|\bbfR-\bfR(S,a)\|<\delta$ for all $(S,a)$ in $\bfH'$. By the
norm-separability of the range of $\bbfR$ we may find an uncountable
$\bfH''\subseteq \bfH'$ such that for all $(S,a)$ and $(T,b)$ in $\bfH''$ we
have $\|\bbfR(\Psi(\bP_S)-\Psi(\bP_T))\|<\delta$ and
$\|\bbfR(\Psi(a)-\Psi(b))\|<\delta$.

Write $a\approx_\e b$ for $\|a-b\|<\e$. Fix distinct $(S,a)$ and $(T,b)$ in
$\bfH''$. Recalling that $\|\Psi(d)\|=\|d\|$ for all $d$, we have
\begin{align*}
(I-\bbfR)\Psi(a)\Psi(\bP_T)&\approx_\delta &(I-\bfR(S,a))\Psi(a)\Psi(\bP_T)\\
&\approx_{2^{-n-2}}& (I-\bfR(S,a))\Psi(\bP_S)\Psi(c)\Psi(\bP_T)\\
&=&\Psi(\bP_S)(I-\bfR(S,a))\Psi(c)\Psi(\bP_T)\\
&\approx_{2\delta}&\Psi(\bP_S)(I-\bfR(T,b))\Psi(c)\Psi(\bP_T)\\
&\approx_{2^{-n-2}}&\Psi(\bP_S)(I-\bfR(T,b))\Psi(b)\\
&\approx_{2\delta}&\Psi(\bP_S)(I-\bfR(S,a))\Psi(b)\\
&=&(I-\bfR(S,a))\Psi(\bP_S)\Psi(b)\\
&\approx_\delta &(I-\bbfR)\Psi(\bP_S)\Psi(b),
\end{align*}
hence $\|(I-\bbfR)(\Psi(a)\Psi(\bP_T)-\Psi(\bP_S)\Psi(b))\|<6\delta+2^{-n-1}$.
Also
$$
\bbfR\Psi(a)\Psi(\bP_T)\approx_\delta\bbfR\Psi(b)\Psi(\bP_T)=\bbfR\Psi(\bP_T)\Psi(b)\approx_\delta
\bbfR\Psi(\bP_S)\Psi(b)
$$
 and
$\|\Psi(a)\Psi(\bP_T)-\Psi(\bP_S)\Psi(b)\|<8\delta+2^{-n-1}<2^{-n}$. Since an
analogous argument shows $\|\Psi(\bP_T)\Psi(a)-\Psi(b)\Psi(\bP_S)\|<2^{-n}$,
the pair $\{(S,a),(T,b)\}$ satisfies (K3). Since (K1) and (K2) are automatic we
have  $\{(S,a),(T,b)\}\in K^n_1$, a contradiction.
\end{proof}

With  $k$ as in the statement of Lemma~\ref{L.cJ-m} let $\bar n=k+3$.
By Claim~\ref{C.K-n-0} and TA, $\cX$ can be covered by the union of
$K^{\bar n}_1$-homogeneous sets $\cX_i$ for $i\in \bbN$. For each $i$
fix a countable $\bfD_i\subseteq \cX_i$ dense in the separable metric
topology from Claim~\ref{C0}. It will suffice to prove that every
$B\in \cA\setminus\{ B(S): (\exists a)(S,a)\in \bigcup_i \bfD_i\}$
belongs to $\cJ^{\bar n-3}_\sigma(\vec E)$.


Fix a dense set of projections  $Q_i$, for $i\in \bbN$, in the
projections of $\cK(H)$. We also assume that $Q_0=0$ and that for
every $i$ the set $\{Q_m: Q_m\geq Q_i\}$ is dense in $\{P: P\geq Q_i$
and $P$ is a projection in $\cK(H)\}$. For example, we may let $Q_m$
enumerate all finite rank projections belonging to some countable
elementary submodel of $H_{\mathfrak c^+}$.

For $m\in \bbN$ define a relation $\sim_m$ on $\cX$ by letting
\[
(S,a)\sim_m (T,d)
\]
 if and only if all of the following conditions
are satisfied.
\begin{enumerate}
\item [($\sim_m$1)] $S\cap m=T\cap m$,
\item [($\sim_m$2)] $\|(a-b)\bP_{\{i\}}\|<2^{-i-1}$ for all $i<m$,
\item [($\sim_m$3)] $\|Q_j(\Psi(\bP_S)-\Psi(\bP_T))Q_j\|\leq 1/m$ for all $j\leq m$, and
\item [($\sim_m$4)] $\|Q_j(\Psi(a)-\Psi(b))Q_j\|\leq 1/m$ for all $j\leq m$.
\end{enumerate}
We should emphasize that this is not an equivalence relation.

For $p$ and $m$ in $\bbN$ and  $(S,a)\in \cX_{p}$ let
\[
m^+(S,a,p)=\min\{j>m: (\exists (T,d)\in \bfD_{p}) ( (T,d)\sim_m (S,a)
\text{ and } T\cap B\subseteq j)\}.
\]
If $(S,a)\in \cX_{p}$ then $(T,d)$ as in the definition of
$m^+(S,a,p)$ exists and $T\cap B$ is finite. Therefore $m^+(S,a,p)$
is well-defined whenever $(S,a)\in \cX_{p}$.

Let us check that for every $m$ and every $p$ there is a finite set
$F_m\subseteq \bfD_{p}$ such that for every $(S,a)\in \cX_{p}$ there
is $(T,d)\in F_m$ satisfying $(S,a)\sim_m (T,d)$. Clearly there are
only finitely many possibilities for $S\cap m$. The projections
$\bP_{\{i\}}$ and $Q_j$ are finite-dimensional and therefore the unit
ball of the range of any of these projections is totally bounded.
Finally, note that in ($\sim_m$2) we have $(a-b)
\bP_{\{i\}}=\bP_{\{i\}} (a-b)\bP_{\{i\}}$. Therefore for $m\in \bbN$
we have that
\[
m^+=\max\{m^+(S,a,p): (S,a)\in \cX_{p}\text{ for some }p<m\}
\]
is well-defined. Let $m(0)=0$ and $m(j+1)>m(j)^+$ for all $j$. Let
\[
B_0=B\cap \textstyle\bigcup_{j=0}^\infty [m(2j),m(2j+1))
\]
and find a non-decreasing sequence $k(j)$, for $j\in \bbN$, such that
the following conditions are satisfied.
\begin{enumerate}
\item\label{YY1}
 $\delta(j)=\|Q_{k(j)}\Psi(\bP_{B_0})-\Psi(\bP_{B_0})Q_{k(j)}\|$
satisfies $\lim_{j\to \infty} \delta(j)=0$,
\item \label{YY2} $k(j)\leq m(2j+1)$, and
\item \label{YY4} $Q_{k(j)}$ strongly converge to the identity.
\pushcounter
\end{enumerate}
Let us describe the construction of the sequence $k(j)$, for $j\in
\bbN$.  Since we can
write $R$ as a strong limit of an increasing sequence of finite rank
projections there is an increasing sequence of finite rank
projections $R_i$, for $i\in \bbN$, that strongly converge to the
identity and such that 
\[
\lim_{j\to
\infty}\|R_j\Psi(\bP_{B_0})-\Psi(\bP_{B_0})R_j\|=0.
\]
 Let $k(0)=0$ and
using the density of~$Q_i$, for $i\in \bbN$, pick a nondecreasing
sequence $l(j)$ such that $\|Q_{l(j)}-R_j\|\to 0$  as $j\to\infty$
and $Q_{l(j)}$ converge to the identity in the strong operator
topology as $j\to \infty$. Letting $k(j)=\max\{l(i): l(i)\leq
m(2j+1)\}$ we have that \eqref{YY1}--\eqref{YY4} hold.

 For $a\in
\cU_{B_0}[\vec E]$ and $p\in \bbN$ let
\begin{align*}
\cY_{a,p}=\{c: (\forall j>p)(\exists (S,d)\in &\bfD_{p})\text{ so that }\\
(i)\ &S\cap B_0\subseteq m(2j+1), \\
(ii)\ &S\cap m(2j+1)=B_0\cap m(2j+1),\\
(iii)\ &\|(a-d)\bP_{\{i\}}\|<2^{-i}\text{ for }i\in S\cap B_0,\\
\text{ and for all $l\leq m(2j+1)$ we have }(iv)\ &\|Q_{l}(\Psi(\bP_{B_0})-\Psi(\bP_S))Q_{l}\|<2/j\\
\text{ and }(v)\ &\|Q_{l}(c-\Psi(d))Q_{l}\|<2/j\}.
\end{align*}
Since $\bfD_{p}$ is countable  the set
\[
\cY(\bar n, p)=\bigcup\{\{a\}\times \cY_{a,p} : a\in \cU_{B_0}[\vec
E]\}
\]
 is Borel for all $p$.

\begin{claim}\lbl{C7.5}
Assume  $a\in \cU_{B_0}[\vec E]$ is such that  $(B_0,a)\in \cX_{p}$.
Then
\begin{enumerate}
\popcounter
\item\label{C7.5.1} $\Psi(a)\in \cY_{a,p}$ and
\item \label{C7.5.2} $\|\Psi(\bP_{B_0})c-\Psi(a)\Psi(\bP_{B_0})\|<2^{-\bar n+1}$
for all $c\in \cY_{a,p}$. \pushcounter
\end{enumerate}
\end{claim}

\begin{proof} \eqref{C7.5.1} Fix $j$.
By the definition of $\sim_{m(2j+1)}$ and the choice of $m(2j+2)$ we
can choose $(S,d)\in\bfD_p$ such that (i)--(v) are satisfied with
$c=\Psi(a)$.

\eqref{C7.5.2} Assume the contrary, that $\|\Psi(\bP_{B_0})c
-\Psi(a)\Psi(\bP_{B_0})\|>2^{-\bar n+1}$. Fix~$j$ large enough to
have $2\delta(j)<2^{-\bar n}$ and
\begin{enumerate}
\popcounter
\item
$\|Q_{k(j)}(\Psi(\bP_{B_0})c-\Psi(a)\Psi(\bP_{B_0}))Q_{k(j)}\|>2^{-\bar
n+1}.$ \pushcounter
\end{enumerate}
Fix $i\geq j$.
 By the definition of $\cY_{a,p}$
we can pick $(S,d)=(S(i),d(i))\in \bfD_p$ such that
\begin{enumerate}
\popcounter
\item \label{XX1} $S\cap B_0\subseteq m(2i+1)$,
\item \label{XX2} $S\cap m(2i+1)=B_0\cap m(2i+1)$,
\item \label{XX3} $\|(a-d)\bP_{\{r\}}\|<2^{-r}$ for all $r\in S\cap B_0$,
\item\label{YY8}
$\|Q_{l}(\Psi(\bP_{B_0})-\Psi(\bP_S))Q_{l}\|<2/i$ for all $l\leq
m(2i+1)$, and
\item\label{YY10} $\|Q_{l}(c-\Psi(d))Q_{l}\|<2/i$ for all $l\leq m(2i+1)$.
\pushcounter
\end{enumerate}
Since the pair $\{(B_0,a),(S,d)\}$ belongs to $K^{\bar n}_1$ and the
corresponding instances of (K1) and (K2) hold, we must have
\begin{enumerate}
\popcounter
\item\label{YY15} $\|\Psi(\bP_{B_0})\Psi(d)-\Psi(a)\Psi(\bP_S)\|<2^{-\bar n}$.
\pushcounter
\end{enumerate} The proof is concluded by a computation.
 Writing  $x\approx^j_\e y$ for
\[
\|Q_{k(j)} (x-y)Q_{k(j)}\|\leq\e,
\]
by \eqref{YY1}, \eqref{YY10}, \eqref{YY1} and \eqref{YY15} respectively we have
\begin{multline*}
\Psi(\bP_{B_0})c\approx^j_{\delta(j)} \Psi(\bP_{B_0}) Q_{k(j)} c
\approx^j_{2/i} \Psi(\bP_{B_0})Q_{k(j)} \Psi(d)\\
\approx^j_{\delta(j)} \Psi(\bP_{B_0})\Psi(d)\approx^j_{2^{-\bar n}}
\Psi(a)\Psi(\bP_S)
\end{multline*}
and therefore
\begin{enumerate}
\popcounter
\item \label{YY16}$\|Q_{k(j)}(\Psi(\bP_{B_0})c-\Psi(a)\Psi(\bP_S)
)Q_{k(j)}\|\leq 2^{-\bar n}+\frac 2i+2\delta(j)$.
\end{enumerate}
Recall that $(S,d)=(S(i),d(i))$ depends on $i$ and note that
\eqref{YY8} implies that  $\Psi(\bP_{S(i)})$ converge to
$\Psi(\bP_{B_0})$ in the strong operator topology as $i\to \infty$.
Since the range of $Q_{k(j)}$ is finite-dimensional,
\[
\lim_{i\to \infty} \|Q_{k(j)}(\Psi(a)\Psi(\bP_{S(i)})-
\Psi(a)\Psi(\bP_{B_0}))Q_{k(j)}\|=0.
\]
Together with \eqref{YY16} this implies
\[
\|Q_{k(j)}(\Psi(\bP_{B_0})c-\Psi(a)\Psi(\bP_{B_0})
)Q_{k(j)}\|<2^{-\bar n+1},
\]
 a contradiction.
\end{proof}

By Theorem~\ref{T.JvN} there is a C-mea\-su\-ra\-ble uniformization
$\Theta^0_{p}\colon \cU_{B_0}[\vec E]\to \cB(H)$ of $\cY(\bar n, p)$.
By Claim~\ref{C7.5} the graphs of functions
\[
\Upsilon^0_{p}(a)=\Psi(\bP_{B_0})\Theta^0_{p}(a)
\]
for $p\in \bbN$ cover a graph of a $2^{-\bar n+1}$-approximation
to~$\Phi$. By Lemma~\ref{L.guess} \eqref{L.guess.1} there are
Borel-measurable functions wit\-nes\-sing $B_0\in \cJ^{\bar
n-2}_\sigma(\vec E)$. An analogous argument gives $(\Upsilon_i^1)_i$
witnessing $B_1=B\setminus B_0\in \cJ^{\bar n-2}_\sigma(\vec E)$.
Since $a\in \cU_B[\vec E]$ implies that both
$a\bP_{B_0}=\bP_{B_0}a\bP_{B_0}\in\dom(\Upsilon^0_i)$ and $a\bP_{
B_1}=\bP_{B_1}a\bP_{B_1}\in \dom(\Upsilon^1_j),$ functions
$\Upsilon_{ij}(a)=\Upsilon^0_i(a \bP_{B_0})+\Upsilon^1_j(a \bP_{
B_1})$ witness $B\in \cJ^{\bar n-3}_\sigma(\vec E)$.
\end{proof}

\subsection{Uniformizations}\lbl{S.sigma}
An automorphism $\Phi$ of $\cC(H)$ and its representation $\Psi$ are fixed. The
unitary group $\cU_A[\vec E]$ of $\calD_A[\vec E]$  is compact metric with
respect to its strong operator topology.  Let $\nu_{\vec E}$ denote the
normalized Haar measure on this group.

\begin{lemma}\lbl{L.F.B1}
Assume $\bK$ is a positive Haar-measurable subset of $\cU[\vec E]$ such that
$\Phi$ has a measurable $\e$-approximation $\Xi$ on $\bK$. Then $\Phi$ has a
Borel-measurable $2\e$-approximation on $\cU[\vec E]$.
\end{lemma}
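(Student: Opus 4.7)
The plan is a translation argument in the compact Polish group $\cU[\vec E]$, leveraging Fubini, the Steinhaus--Weil theorem, Theorem~\ref{T.U.Large.Measure}, and Lemma~\ref{L.guess}\iref{L.guess.1} to upgrade Haar-measurability to Borel-measurability. The core observation is that whenever $b,\, ab \in \bK$, the product $\Xi(ab)\Xi(b)^*$ approximates $\Phi(\pi(a))$ to within $2\e$ modulo compacts:
\[
\pi(\Xi(ab)\Xi(b)^*) \approx_{\e} \Phi(\pi(ab))\pi(\Xi(b))^* \approx_{\e} \Phi(\pi(ab))\Phi(\pi(b))^* = \Phi(\pi(a)),
\]
using that $\Phi$ is a $*$-homomorphism, $\Phi(\pi(b))$ is unitary, and (after a harmless truncation of $\Xi$ to the unit ball via functional calculus) $\|\pi(\Xi(b))\|\le 1$.

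By Lusin's theorem applied to the Haar-measurable pair $(\bK,\Xi)$ I reduce to the case where $\bK$ is Borel and $\Xi\colon \bK\to\cB(H)$ is Borel. The Borel set
\[
\cY = \{(a,b)\in\cU[\vec E]\times \cU[\vec E] : b\in\bK \text{ and } ab\in\bK\}
\]
 has vertical sections $\cY_a = \bK\cap a^{-1}\bK$, and Fubini with translation-invariance of $\nu_{\vec E}$ gives $\int \nu_{\vec E}(\cY_a)\,d\nu_{\vec E}(a) = \nu_{\vec E}(\bK)^2>0$. The Steinhaus--Weil theorem in compact Polish groups then shows that the positive-measure Borel set $\bfD := \{a : \nu_{\vec E}(\cY_a)>0\}$ (which equals $\bK\cdot\bK^{-1}$ modulo null sets) contains an open neighborhood $V$ of the identity. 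Theorem~\ref{T.U.Large.Measure} supplies a Borel selection $a\mapsto b(a)$ uniformizing $\cY$ over $\bfD$, and the formula $\Xi'(a) := \Xi(ab(a))\Xi(b(a))^*$ defines, by the core observation, a Borel $2\e$-approximation to $\Phi$ on $\bfD$.

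To reach all of $\cU[\vec E]$, I would exploit compactness: $\cU[\vec E] = g_1V\cup\cdots\cup g_nV$ for some $g_1,\dots,g_n\in\bK$. On the associated Borel partition define the final approximation piecewise by $a\mapsto \Xi(g_i)\Xi'(g_i^{-1}a)$, noting that the exact homomorphism identity $\Phi(\pi(g_i))\Phi(\pi(g_i^{-1}a))=\Phi(\pi(a))$ together with $\|\Phi(\pi(g_i))\|=1$ allows the outer translation factor to be absorbed. A final application of Lemma~\ref{L.guess}\iref{L.guess.1} promotes the resulting map to one with the required Borel-measurability.

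The main obstacle will be this patching step: a priori the outer-factor error from $\Xi(g_i)$ compounds with the inner $2\e$-error and pushes the total beyond $2\e$. The cleanest resolution is to reduce to the case $\bK\cdot\bK^{-1}=\cU[\vec E]$ so that no patching is needed; this reduction is available either by first enlarging $\bK$ to a Borel set of Haar measure $>1/2$ (for which $\bK\cdot\bK^{-1}=\cU[\vec E]$ by an elementary counting argument with the bi-invariant $\nu_{\vec E}$), or by a cocycle-style bookkeeping in which the errors from the translation factors are written as multiplicative perturbations that, modulo compacts, telescope via the $*$-homomorphism property of $\Phi$ into a single factor of $2\e$.
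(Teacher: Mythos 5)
Your core observation is exactly right and matches the paper's engine: if $b$ and $ab$ both lie in $\bK$ then $\Xi(ab)\Xi(b)^*$ is a $2\e$-approximation to $\Phi(\pi(a))$, and the Fubini/uniformization machinery (Theorem~\ref{T.U.Large.Measure}) is the right way to turn this into a single Borel selection. You also correctly diagnose that the Steinhaus--Weil/patching route pushes the error to roughly $3\e$, not $2\e$.

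The gap is in your proposed repair. You say ``first enlarge $\bK$ to a Borel set of Haar measure $>1/2$'' and then no patching is needed. That is indeed what the paper does, and with $\nu_{\vec E}(\bK')>1/2$ one gets $\bK'\cap a\bK'\neq\emptyset$ for every $a$ by measure counting (no Steinhaus--Weil needed), so the uniformization argument applies globally. But you never say how the $\e$-approximation $\Xi$ is to be defined on the enlarged set; a priori, enlarging $\bK$ destroys the hypothesis. The crucial point you are missing is the specific structure of $\cU[\vec E]=\prod_i\cU(E_i)$: if $a$ is the identity on all coordinates $i\geq n$, then $a-I$ has finite rank, so $\pi(a)=I$ and $\Phi(\pi(ab))=\Phi(\pi(b))$. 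The paper therefore chooses a density point (a basic open $\bU=\bU_0\times\prod_{i\geq n}\cU(E_i)$ with $\nu(\bK\cap\bU)>\nu(\bU)/2$) and translates $\bK$ by a finite set $F$ of such finitely supported unitaries so that $\bK'=F\bK$ has $\nu(\bK')>1/2$, defining $\Xi'(b)=\Xi(ab)$ for the first $a\in F$ with $ab\in\bK$. Because each $a\in F$ is a compact perturbation of $I$, $\Xi'$ is still an $\e$-approximation on $\bK'$, with no loss. Your ``cocycle-style bookkeeping'' alternative is too vague to fill the same role: generic translation errors accumulate and do not telescope away. So the architecture of your proof is correct, but it is missing the one structural fact about $\cU[\vec E]$ (translations by finitely supported unitaries are $\pi$-trivial) that makes the measure-$>1/2$ reduction legitimate without inflating $\e$.
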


\begin{proof} By Luzin's theorem (\cite[Theorem~17.12]{Ke:Classical}),
by possibly shrinking $\bK$ we may assume it is compact and the restriction of
$\Xi$ to $\bK$ is continuous. Let us first see that we may assume
$\nu(\bK)>1/2$. Let $\bU\subseteq \cU [\vec E]$ be a basic open set such that
$\nu(\bK\cap \bU)>\nu(\bU)/2$. Let $n$ be large enough so that there is an open
$\bU_0\subseteq \prod_{i<n} \cU(E_i)$ satisfying $\bU=\bU_0\times \prod_{i\geq
n} \cU(E_i)$. Fix a finite $F\subseteq \{a\in \cU[\vec E]\mid a(i)=I_i$ for all
$i\geq n\}$ such that $F\bU_0=\cU[\vec E]$. Then $\bK'=F\bK$ has measure $>1/2$
and $\Xi'$ with domain $\bK'$ defined by $\Xi'(b)=\Xi(ab)$, where $a$ is the
first element of $F$ such that $ab\in \bK$, is a continuous $\e$-approximation
of $\Phi$ on $\bK'$.

 Let $\cX=\{(a,b)\in \cU [\vec E]\times \bK'\mid ab^*\in
\bK'\}$. This set is closed.  Since~$\nu_{\vec E}$ is invariant and unimodular,
for each $a$ there is $b$ such that $(a,b)\in \cX$. By Theorem~\ref{T.U.Large.Measure}
there is a Borel-measurable $f\colon\cU [\vec E]\to \bK$ such that $(a,f(a))\in
\cX$ for all $a$. The map $\Xi_1(a)=\Xi(af(a)^*)\Xi(f(a))$ is clearly a
$2\e$-approximation to $\Phi$ and it is Borel-measurable.
\end{proof}

\begin{prop} \lbl{L.sigma}
If $M_i$, $i\in \bbN$ are pairwise disjoint infinite subsets of
$\bbN$ and $M=\bigcup_i M_i$ is in
 $\cJ^{n}_\sigma(\vec E)$
then there is $i$ such that $M_i\in \cJ^{n-2}(\vec E)$.
\end{prop}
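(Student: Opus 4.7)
The plan is to fix any index $i$ and manufacture a single Borel-measurable $2^{-(n-2)}$-approximation to $\Phi$ on $\cU_{M_i}[\vec E]$, by a Haar-measure pigeonhole extracting a positive-measure fragment from the given $\sigma$-cover followed by the translation-averaging trick of Lemma~\ref{L.F.B1}.

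First I would fix Borel-measurable $\Psi_k\colon\cU_M[\vec E]\to\cB(H)$ witnessing $M\in\cJ^n_\sigma(\vec E)$, and for each $i$ introduce the continuous embedding $\iota_i\colon\cU_{M_i}[\vec E]\to\cU_M[\vec E]$ given by $\iota_i(a)=a+\bP_{M\setminus M_i}$. Since $\bP_{M\setminus M_i}$ is a projection and $\Phi$ is a $*$-homomorphism, $\Phi(\pi(\bP_{M\setminus M_i}))$ is a projection in $\cC(H)$ that lifts to some projection $Q_i\in\cB(H)$. Because $\pi(a)$ and $\pi(\bP_{M\setminus M_i})$ are orthogonal in $\cC(H)$ for $a\in\cU_{M_i}[\vec E]$, we have $\Phi(\pi(\iota_i(a)))=\Phi(\pi(a))+\pi(Q_i)$, so the Borel maps $\Xi_k^{(i)}(a):=\Psi_k(\iota_i(a))-Q_i$ inherit the covering property: for each $a\in\cU_{M_i}[\vec E]$ some $k$ satisfies $\|\Phi(\pi(a))-\pi(\Xi_k^{(i)}(a))\|\leq 2^{-n}$.

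Next I would invoke Haar-measure pigeonhole on the compact Polish group $\cU_{M_i}[\vec E]=\prod_{j\in M_i}\cU(E_j)$. The sets $A_k^{(i)}:=\{a\mid\|\Phi(\pi(a))-\pi(\Xi_k^{(i)}(a))\|\leq 2^{-n}\}$ cover $\cU_{M_i}[\vec E]$; once they are seen to be universally measurable (analytic), some $A_{k(i)}^{(i)}$ contains a Borel subset $\bK$ of positive Haar measure. Then $\Xi_{k(i)}^{(i)}\rs\bK$ is a Borel-measurable $2^{-n}$-approximation to $\Phi$ on $\bK$, and the proof of Lemma~\ref{L.F.B1} goes through verbatim with $\cU_{M_i}[\vec E]$ in place of $\cU[\vec E]$: Luzin's theorem combined with translation by a finite subset yields a Borel set of measure $>1/2$ on which the restriction is continuous; Theorem~\ref{T.U.Large.Measure} supplies a Borel $f$ with $af(a)^*\in\bK$; and $a\mapsto\Xi_{k(i)}^{(i)}(af(a)^*)\,\Xi_{k(i)}^{(i)}(f(a))$ is a Borel-measurable $2\cdot 2^{-n}$-approximation to $\Phi$ on all of $\cU_{M_i}[\vec E]$, using multiplicativity of $\Phi$ on $\pi(\calD_{M_i}[\vec E])$ together with the identity $\Phi(\pi(a))\Phi(\pi(\bP_{M_i}))=\Phi(\pi(a))$. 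This establishes $M_i\in\cJ^{n-1}(\vec E)\subseteq\cJ^{n-2}(\vec E)$.

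The main technical hurdle will be the universal measurability of the $A_k^{(i)}$: since no Borel representation of $\Phi$ is assumed at this stage of the paper, the Calkin-norm condition defining $A_k^{(i)}$ has no obvious Borel formula in $a$. The intended resolution is to realize $A_k^{(i)}$ as the first-coordinate projection of a Borel subset of $\cU_{M_i}[\vec E]\times\cB(H)_{\leq 2}$ obtained by existentially quantifying a lift $b$ of $\Phi(\pi(a))$: the norm condition $\|\Xi_k^{(i)}(a)-b\|_\cK\leq 2^{-n}$ is Borel by Lemma~\ref{L.T.3}(b), and the relation $\pi(b)=\Phi(\pi(a))$ admits a Borel description via countable witnesses from a norm-dense subset of the Calkin algebra, rendering $A_k^{(i)}$ analytic.
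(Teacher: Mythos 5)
Your plan, if it worked, would prove that \emph{every} $M_i$ belongs to $\cJ^{n-2}(\vec E)$ (indeed to $\cJ^{n-1}(\vec E)$), since you ``fix any index $i$'' at the outset. That is strictly stronger than the statement and should already arouse suspicion; the paper's proof only produces one good $i$, and does so by contradiction.

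The step that fails is the Haar-measure pigeonhole. The sets
\[
A_k^{(i)}=\bigl\{a\in\cU_{M_i}[\vec E]:\|\Phi(\pi(a))-\pi(\Xi_k^{(i)}(a))\|\leq 2^{-n}\bigr\}
\]
are \emph{not} Borel, analytic, or universally measurable. Your proposed repair --- writing $A_k^{(i)}$ as the projection of $\{(a,b):\pi(b)=\Phi(\pi(a)) \text{ and }\|\Xi_k^{(i)}(a)-b\|_\cK\leq 2^{-n}\}$ --- does not produce an analytic set, because the relation $\pi(b)=\Phi(\pi(a))$ is itself the culprit: verifying it via ``countable witnesses'' from a dense subset of $\cC(H)$ requires evaluating norms $\|\Phi(\pi(a))-c_m\|$, which still involves $\Phi(\pi(a))$. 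Having a Borel (or analytic) description of this relation is essentially equivalent to $\Phi$ having a C-measurable representation on $\cU_{M_i}[\vec E]$, which is precisely what the paper is in the middle of proving. In ZFC a compact group can be covered by countably many sets each of inner Haar measure zero (think of the translates of a Vitali set), so nothing forces some $A_k^{(i)}$ to contain a positive-measure Borel subset, and Lemma~\ref{L.F.B1} never gets off the ground.

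The paper avoids this by never applying measure theory to a set defined in terms of $\Phi$ or a non-measurable representation $\Psi$ evaluated at a variable point. Instead it assumes (toward a contradiction) that $M_i\notin\cJ^{n-2}(\vec E)$ for all $i$ and runs a recursive diagonalization: at stage $i$ it forms genuinely Borel sets such as
\[
\cX=\{(a,b,c): \|\Xi_i(\textstyle\sum_{j<i}a_j+a+b)\Psi(P_i)-c\|_\cK\leq 2^{-n}\}
\]
(Borel because $\Xi_i$ is Borel, $\sum_{j<i}a_j$ is a fixed operator, and $\|\cdot\|_\cK\leq\e$ is Borel by Lemma~\ref{L.T.3}), applies Fubini and Jankov--von Neumann to these, and only invokes $M_i\notin\cJ^{n-2}(\vec E)$ in the case where a Borel uniformization exists --- exactly to conclude that the uniformization must be a bad approximation at some point $a_i$, which is then used to shrink $\cY_i$. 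At the end the element $a=\sum_i a_i$ defeats every $\Xi_k$, contradicting $M\in\cJ^n_\sigma(\vec E)$. The measure-theoretic translation trick (your Lemma~\ref{L.F.B1} step) does appear in the paper's proof, but only inside this contradiction, applied to a Borel $f$ produced by uniformizing a Borel set, not to the non-measurable $A_k^{(i)}$. So the missing idea in your proposal is the contradiction-driven diagonalization that converts the $\sigma$-cover into a pointwise defeat, and the erroneous idea is the claim that the covering sets are measurable.

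Two minor remarks: since $\cJ^n_\sigma(\vec E)$ is hereditary (as the paper notes just after the definition), the embedding $\iota_i$ and the projection $Q_i$ are unnecessary --- $M_i\subseteq M$ already gives $M_i\in\cJ^n_\sigma(\vec E)$. Also, the target in the statement is $\cJ^{n-2}$, not $\cJ^{n-1}$; the two lost powers of $2$ come from the two uses of the translation trick (Lemma~\ref{L.F.B1} and the final averaging inside the recursion), which matters for the bookkeeping in Proposition~\ref{P.2}.
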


\begin{proof} Assume not. Write $P_i=\bP^{\vec E}_{M_i}$ and $P=\bP^{\vec E}_M$.
Fix Borel-measurable functions $\Xi_i$, $i\in \bbN$,  whose graphs cover a
$2^{-n}$-ap\-pro\-xi\-ma\-ti\-on to $\Psi$ on $\cU_M[\vec E]$.  Let
$Q_i=\bigvee_{j=i}^\infty P_j$, hence $Q_0=P$. By making unessential changes to
$\Psi$, we may assume $\Psi(P_i)$,  $i\in \bbN$,  are pairwise orthogonal
projections such that $\Psi(Q_i)=\bigvee_{j\geq i} \Psi(P_j)$ for all $i$. Let
$\cV_i=\prod_{j=i}^\infty \cU_{M_i}[\vec E]$, a compact group with Haar measure
$\mu_i$. We shall find $a_i\in \cU_{M_i}[\vec E]$ and a $\mu_i$-positive compact
$\cY_i\subseteq \cV_{i+1}$ such that for all $i$ and all $b\in \cY_i$ we have
\begin{equation}
\label{E.inductive.1} \|(\Xi_i(\textstyle(\sum_{j\leq i}
a_j)+b)-\Psi(a_i))\Psi(P_i)\|_{\cK}>2^{-n}.
\end{equation}
We shall also assure that for $j<i$ we have
\begin{equation}
\lbl{E.inductive.2} \cY_i\subseteq \{b\in \cV_{i+1}\mid
b+\textstyle\sum_{k=j+1}^i a_k\in \cY_j\}.
\end{equation}
The condition \eqref{E.inductive.2} will assure that  
$\hat\cY_i\subseteq \cV_0$ defined 
by $\hat\cY_i=\{\sum_{j=0}^i a_j+b\colon b\in \cY_i\}$, for $i\in \bbN$, 
form a decreasing sequence of compact sets. 
Assume $a_0,a_1,\dots a_{i-1}$ and $\cY_{i-1}$ have been chosen to satisfy
\eqref{E.inductive.1} and \eqref{E.inductive.2}. Using Fubini's theorem,  
Lebesgue density theorem, and the 
inner regularity of the Haar measure, 
find compact  positive sets $V_i\subseteq \cU_{M_i}[\vec
E]$ and $W_i\subseteq \cV_{i+1}$ such that for every $x\in V_i$ we have
$\mu_{i+1}\{y\in W_i\mid (x,y)\in \cY_i\}>\mu_{i+1}(W_i)/2$. Let
\begin{multline*}
\cX=\{(a,b,c)\in V_i\times W_i\times \cU(H)\mid
\|\Xi_i((\textstyle\sum_{j<i}a_j)+a+b)\Psi(P_i)-c\|_{\cK}\leq 2^{-n}\}.
\end{multline*}
This is  a Borel set, and so is
$$
\cX_1=\{(a,c)\mid \mu_i\{b\mid (a,b,c)\in
\cX\}>\mu_i(W_i)/2\}.
$$
Let $\cZ$ be the set of all $a\in V_i$ such that $\{b\mid (a,b)\in \cX_1\}\neq
\emptyset$. This is a projection of $\cX_1$. If $\cZ\neq V_i$, pick $a_i\in
V_i\setminus \cZ$. Since $(a_i,\Psi(a_i))\notin \cX_1$, with
$$
\cY_{i+1}'=\{b\in W_i\mid
\|\Xi_i((\textstyle\sum_{j<i}a_j)+a_i+b)\Psi(P_i)-\Psi(a_i)\|_{\cK}> 2^{-n}\}
$$
we have $\mu_{i+1}(\cY_{i+1}')\geq \mu_{i+1}(W_i)/2$. In this case
$\cY_{i+1}''=\cY_{i+1}'\cap\{b\in W_i\mid (a_i,b)\in \cY_i\}$ is $\mu_i$-positive
and satisfies \eqref{E.inductive.1} and \eqref{E.inductive.2}. By the inner regularity of the Haar measure
find a compact positive $\cY_{i+1}\subseteq \cY_{i+1}''$ and  
proceed with the construction.

We may therefore without a loss of generality assume $\cZ=V_i$.
 By Theorem~\ref{T.JvN} there is a C-measurable $\bar f\colon \cU_{M_i}[\vec E]\to \cB(H)$
such that $(a,\bar f(a))\in \cX_1$ for all $a\in \cZ$. 
Then $f$ defined by $f(a)=\Psi(P_i)\bar f \Psi(P_i)$ is also Borel. 
Since $\cZ=V_i$ has positive
measure, if $f$ is a $2^{-n+1}$-approximation of $\Phi$ on $\cZ$, then
Lemma~\ref{L.F.B1} gives a Borel $2^{-n+2}$-approximation of $\Phi$ on
$\cU_{M_i}[\vec E]$, showing that $M_i\in \cJ^{n-2}(\vec E)$ and contradicting
our assumption. Therefore we can fix $a_i\in \cZ$ such that
$\|(f(a_i)-\Psi(a_i))\Psi(P_i)\|_{\cK}>2^{-n+1}$. Then $\cY_{i+1}'=\{b\mid
(a_i,b,f(a_i))\in \cX\}$ has a positive measure and for each
$b\in \cY_{i+1}'$ clause \eqref{E.inductive.1} holds because
\begin{multline*}
\|(\Xi_i(\textstyle\sum_{j\leq i}
a_j+b)-\Psi(a_i))\Psi(P_i)\|_{\cK} \\
\geq \|(f(a_i)-\Psi(a_i))\Psi(P_i)\|_{\cK}-
\|(\Xi_i(\textstyle\sum_{j\leq i}
a_j+b)-f(a_i))\Psi(P_i)\|_{\cK}>2^{-n}.
\end{multline*}
Let $\cY_{i+1}\subseteq \cY_{i+1}'$ be a compact positive set. 
 This describes
the construction. Let $a=\sum_{i=0}^\infty a_i$. Since
$a_i=P_iaP_i$ for each $i$ and $P_i$ are pairwise
orthogonal, $\|a\|\leq \sup_i \|a_i\|=1$.
 For some $i$ we have
$\|\Xi_i(a)-\Psi(a)\|_{\cK}\leq 2^{-n}$, hence
$\|(\Xi_i(a)-\Psi(a_i))\Psi(P_i)\|_{\cK}\leq 2^{-n}$. However, $\sum_{j=i+1}^\infty a_i$ is in $\cY_i$ 
by~\eqref{E.inductive.2}  and the compactness of  $\cY_i$ in the product topology. 
This contradicts \eqref{E.inductive.1}.
\end{proof}

\begin{proof}[Proof of Proposition~\ref{P.2}] Enumerate $\bbN$ as $n_s$ ($s\in
2^{<\bbN}$) and write $M_x=\{n_{x\rs j}\mid j\in \bbN\}$. By
Lemma~\ref{L.cJ-m}, for every $m$ the set $\{x\mid M_x\notin
\cJ^{n}_\sigma(\vec E)\}$ is at most countable. We may therefore fix
$x_0$ such that $M_0=M_{x_0}$ belongs to $\cJ^{n}_\sigma(\vec E)$ for
each $n$. Partition $M_0$ into infinitely many infinite pieces. By
Lemma~\ref{L.sigma} at least one of these pieces, call it $M_1$,
belongs to $\cJ^1(\vec E)$. By successively applying this argument we
find a decreasing sequence $M_j$ of infinite subsets of $M_0$ such
that $M_j\in \cJ^j(\vec E)$ for each $j$. Fix an infinite $M$ such
that $M\setminus M_j$ is finite for all $j$. Then $M\in \bigcap_j
\cJ^j(\vec E)$ and on $\calD_M[\vec E]$ there is a Borel-measurable
$2^{-j}$-approximation to $\Phi$ for each $j$. By Lemma~\ref{L.n}
there is a C-measurable representation of $\Phi$ on $\calD_M[\vec
E]$. By Theorem~\ref{T.2}, $\Phi$ is inner on $\calD_M[\vec E]$ and
by Lemma~\ref{L.trivial.2}, $\Phi$ is inner on $\calD[\vec E]$.
\end{proof}

\begin{proof}[Proof of Theorem~\ref{T.0}] Fix an automorphism $\Phi$ of
$\cC(H)$ and an orthonormal basis $(e_n)$ for $H$. For every partition $\vec E$
of $\bbN$ into finite intervals such that $\# E_n$ is nondecreasing
Proposition~\ref{P.2} implies there is a partial isomorphism $u=u(\vec E)$
between cofinite-dimensional subspaces of $H$ such that $\Psi_u$ is a
representation of $\Phi$ on $\cC[\vec E]$. Therefore $\{(\vec E,u(\vec E))\}$
is a coherent family of unitaries and Theorem~\ref{P.1} implies $\Phi$ is
inner.
\end{proof}

\section{Concluding remarks}
 Let $S$ denote the unilateral shift
operator. The following problem of Brown--Douglas--Fillmore is
well-known.

\begin{problem} \lbl{Q.1}
Is it consistent with the usual axioms of mathematics that some automorphism of
the Calkin algebra sends $\pi(S)$ to its adjoint?
\end{problem}

Ilan Hirshberg pointed out that there are essentially normal
operators $a$ and $b$ with the same essential spectrum such that
$\Phi(\pi(a))\neq  \pi(b)$ for all inner automorphisms $\Phi$ of the
Calkin algebra. This is because for a fixed $\Phi$ either
$\findex(\Phi(a))=\findex(a)$ for all Fredholm operators $a$ or
$\findex(\Phi(a))=-\findex(a)$ for all Fredholm operators $a$.
Together with the Brown--Douglas--Fillmore characterization of
unitary equivalence modulo compact perturbation of essentially normal
operators, this implies that a positive answer to Problem~\ref{Q.1}
is equivalent to the consistency of the existence of normal operators
$a$ and $b$ in $\cC(H)$ and an automorphism $\Phi$ of $\cC(H)$ such
that $\Phi(a)=b$ but for every inner automorphism $\Psi$ of $\cC(H)$
we have $\Psi(a)\neq b$.
An argument using \cite{AlCoMac} shows that if an automorphism $\Phi$
sends the standard atomic masa to itself then $\Phi$ cannot send $\dot S$ to $\dot S^*$
(see \cite[Proposition~7.7]{FaWo:Set}).

Recall that for a C*-algebra $A$ its \emph{multiplier algebra}, the
quantized analogue of the \v Cech--Stone compactification,  is
denoted by $M(A)$  (see \cite[1.7.3]{Black:Operator}). For example,
$M(\cK(H))=\cB(H)$, $M(C_0(X))=C(\beta X)$ for a locally compact
Hausdorff space $X$, and $M(A)=A$ for every unital C*-algebra $A$.
George Elliott suggested investigating when all automorphisms of
$M(A)/A$ are trivial and Ping Wong Ng suggested investigating when
isomorphism of the corona algebras $M(A)/A$ and $M(B)/B$ implies
isomorphism of $A$ and $B$. The following is the set-theoretic core
of both of these problems and it is very close to \cite{Fa:Rigidity}
and \cite{Fa:AQ} in spirit.

\begin{problem} Assume  $A$ and $B$ are separable non-unital C*-algebras.
When does every isomorphism between the corona algebras $M(A)/A$ and
$M(B)/B$ lift to a
*-homomorphism $\Phi$ of  $M(A)$ into $M(B)$, so that the
diagram \[ \diagram
M(A) \xto[r]^{\Phi}\xto[d]^{\pi} &   M(B)\xto[d]^{\pi}\\
M(A)/A \xto[r]_{\Psi} &M(B)/B
\enddiagram
\]
commutes?
\end{problem}

TA implies the positive answer  when $A=B=\cK(H)$
(Theorem~\ref{T.0}) and TA+MA implies the positive answer when both
$A$ and $B$ are of the form~$C_0(X)$ for a countable locally compact
space $X$ (\cite[Chapter 4]{Fa:AQ}). One could also ask analogous
questions for
*-homomorphisms instead of isomorphisms or, as suggested by Ping Wong Ng,  for
$\ell^\infty(A)/c_0(A)$ instead of the corona algebra. 
A number of analogous lifting results for 
quotient Boolean algebras $\cP(\bbN)/\cI$ was
proved in \cite{Fa:AQ} 
(see also \cite{Fa:Luzin, Fa:Rigidity}).

It was recently proved by the author,  Schimmerling and McKenney that 
the Proper Forcing Axiom, PFA, implies all automorphisms of the Calkin algebra
$\cB(H)/\cK(H)$ are inner, even for nonseparable  Hilbert spaces. 
 An analogous result for  automorphisms of the Boolean algebra 
$\cP(\kappa)/\Fin$, where $\kappa$ is arbitrary, was proved in
\cite{Ve:OCA}.

\subsection*{Acknowledgments} I would like to thank George Elliott, N. Christopher Phillips,
Efren Ruiz,
Juris Step\-r\=ans, Nik Weaver and  Eric Wofsey for stimulating conversations.
 I would also like
to thank J\"org Brendle, Stefan Geschke, Saeed Ghasemi, 
Ilan Hirshberg, Paul McKenney, N. Christopher
Phillips, Sh{\^o}ichir{\^o} Sakai, Ernest Schimmerling,  and Nik Weaver for valuable
comments on earlier versions of this paper. I am
particularly indebted to J\"org Brendle and Paul McKenney. 
J\"org  presented a large part of
the proof in a series of seminars at the Kobe University and suggeted
several improvements in the proof of Lemma~\ref{L.cJ-m}. 
Paul presented the entire proof over the course of sixteen 90$^+$ minute lectures
to an attentive audience at the Mittag--Leffler Institute in September 2009. 
 Finally, I would like to thank Stevo Todorcevic for persistently
insisting that the techniques developed in~\cite{Fa:AQ} will find
other applications.

\providecommand{\bysame}{\leavevmode\hbox to3em{\hrulefill}\thinspace}
\providecommand{\MR}{\relax\ifhmode\unskip\space\fi MR }
\providecommand{\MRhref}[2]{%
  \href{http://www.ams.org/mathscinet-getitem?mr=#1}{#2}
}
\providecommand{\href}[2]{#2}


\begin{thebibliography}{10}

\bibitem{AGG:Approximation}
M.A. Alekseev, L.Yu. Glebskii, and E.I. Gordon, \emph{On approximation of
  groups, group actions, and {H}opf algebras}, Journal of Mathematical Sciences
  \textbf{107} (2001), 4305--4332.

\bibitem{AlCoMac}
J.~L. Alperin, J.~Covington, and D.~Macpherson, \emph{Automorphisms of
  quotients of symmetric groups}, Ordered groups and infinite permutation
  groups, Math. Appl., vol. 354, Kluwer Acad. Publ., Dordrecht, 1996,
  pp.~231--247.

\bibitem{Arv:Notes}
W.~Arveson, \emph{Notes on extensions of {C*}-algebras}, Duke Math. J.
  \textbf{44} (1977), 329--355.

\bibitem{BarJu:Book}
T.~Bartoszynski and H.~Judah, \emph{Set theory: on the structure of the real
  line}, A.K. Peters, 1995.

\bibitem{Black:Operator}
B.~Blackadar, \emph{Operator algebras}, Encyclopaedia of Mathematical Sciences,
  vol. 122, Springer-Verlag, Berlin, 2006, Theory of $C\sp *$-algebras and von
  Neumann algebras, Operator Algebras and Non-commutative Geometry, III.

\bibitem{BrDoFi:Extensions}
L.~G. Brown, R.~G. Douglas, and P.~A. Fillmore, \emph{Extensions of
  {$C\sp*$}-algebras and {$K$}-homology}, Ann. of Math. (2) \textbf{105}
  (1977), no.~2, 265--324.

\bibitem{BrDoFi:Unitary}
L.G. Brown, R.G. Douglas, and P.A. Fillmore, \emph{Unitary equivalence modulo
  the compact operators and extensions of {$C\sp{\ast} $}-algebras},
  Proceedings of a Conference on Operator Theory, Lecture Notes in Math., vol.
  345, Springer, 1973, pp.~58--128.

\bibitem{Dav:C*}
K.R. Davidson, \emph{{$C\sp *$}-algebras by example}, Fields Institute
  Monographs, vol.~6, American Mathematical Society, Providence, RI, 1996.

\bibitem{DoSiVa}
A.~Dow, P.~Simon, and J.E. Vaughan, \emph{Strong homology and the {P}roper
  {F}orcing {A}xiom}, Proceedings of the American Mathematical Society
  \textbf{106 (3)} (1989), 821--828.

\bibitem{Dye:Unitary}
H.~A. Dye, \emph{The unitary strucure in finite rings of operators}, Duke Math.
  Journal \textbf{20} (1953), 55--70.

\bibitem{Ell:Derivations}
G.A. Elliott, \emph{Derivations of matroid {$C\sp{\ast} $}-algebras. {II}},
  Ann. of Math. (2) \textbf{100} (1974), 407--422.

\bibitem{Ell:Ideal}
\bysame, \emph{The ideal structure of the multiplier algebra of an {AF}
  algebra}, C. R. Math. Rep. Acad. Sci. Canada \textbf{9} (1987), no.~5,
  225--230.

\bibitem{Fa:AQ}
I.~Farah, \emph{Analytic quotients: theory of liftings for quotients over
  analytic ideals on the integers}, Memoirs of the American Mathematical
  Society, vol. 148, no. 702, 2000.

\bibitem{Fa:Liftings}
\bysame, \emph{Liftings of homomorphisms between quotient structures and {U}lam
  stability}, Logic Colloquium '98 (S.~Buss, P.~H\'ajek, and P.~Pudl\'ak,
  eds.), Lecture notes in logic, vol.~13, A.K. Peters, 2000, pp.~173--196.

\bibitem{Fa:Luzin}
\bysame, \emph{Luzin gaps}, Trans. Amer. Math. Soc. \textbf{356} (2004),
  2197--2239.

\bibitem{Fa:Rigidity}
\bysame, \emph{Rigidity conjectures}, Logic Colloquium 2000, Lect. Notes Log.,
  vol.~19, Assoc. Symbol. Logic, Urbana, IL, 2005, pp.~252--271.

\bibitem{FaWo:Set}
I.~Farah and E.~Wofsey, \emph{Set theory and operator algebras},  to appear in 
Proceedings of the  Appalachian set theory workshop 2006--2009.
(J. Cummings and E. Schimmerling, eds.)

\bibitem{Feng:OCA}
Q.~Feng, \emph{Homogeneity for open partitions of pairs of reals}, Transactions
  of the American Mathematical Society \textbf{339} (1993), 659--684.

\bibitem{Fr:Notes}
D.H. Fremlin, \emph{Notes on {F}arah's `{A}nalytic quotients'}, preprint,
  University of Essex, available at {\tt
  http://www.essex.ac.uk/maths/staff/fremlin/preprints.htm}, version of
  16.11.02.

\bibitem{GrKaRo:Jacobi}
K.~Grove, H.~Karcher, and E.A. Roh, \emph{Jacobi fields and {F}insler metrics
  on compact {L}ie groups with an application to differentiable pinching
  problem}, Math. Ann. \textbf{211} (1975), 7--21.

\bibitem{JohPar}
B.~E. Johnson and S.~K. Parrott, \emph{Operators commuting with a von {N}eumann
  algebra modulo the set of compact operators}, J. Functional Analysis
  \textbf{11} (1972), 39--61.

\bibitem{Kana:Book}
A.~Kanamori, \emph{The higher infinite: large cardinals in set theory from
  their beginnings}, Perspectives in Mathematical Logic, Springer,
  Berlin--Heidelberg--New York, 1995.

\bibitem{KanRe:Ulam}
V.~Kanovei and M.~Reeken, \emph{On {U}lam's problem concerning the stability of
  approximate homomorphisms}, Tr. Mat. Inst. Steklova \textbf{231} (2000),
  249--283.

\bibitem{Kaz:epsilon}
D.~Kazhdan, \emph{On $\varepsilon$-representations}, Israel Journal of
  Mathematics \textbf{43} (1983), 315--323.

\bibitem{KecSof:Strong}
A.~S. Kechris and N.~E. Sofronidis, \emph{A strong generic ergodicity property
  of unitary and self-adjoint operators}, Ergodic Theory Dynam. Systems
  \textbf{21} (2001), no.~5, 1459--1479.

\bibitem{Ke:Classical}
A.S. Kechris, \emph{Classical descriptive set theory}, Graduate texts in
  mathematics, vol. 156, Springer, 1995.

\bibitem{La:Showing}
P.~Larson, \emph{Showing {OCA} in {${\mathbb P}\sb {\rm max}$}-style
  extensions}, Kobe J. Math. \textbf{18} (2001), no.~2, 115--126.

\bibitem{Lor:Lifting}
T.A. Loring, \emph{Lifting solutions to perturbing problems in {$C\sp
  *$}-algebras}, Fields Institute Monographs, vol.~8, American Mathematical
  Society, Providence, RI, 1997.

\bibitem{MarPra}
S.~Mardesic and A.~Prasolov, \emph{Strong homology is not additive},
  Transactions of the American Mathematical Society \textbf{307} (1988),
  725--744.

\bibitem{Moo:Five}
J.T. Moore, \emph{A five element basis for the uncountable linear orders}, Ann.
  of Math. (2) \textbf{163} (2006), no.~2, 669--688.

\bibitem{Pede:Analysis}
G.K. Pedersen, \emph{Analysis now}, Graduate Texts in Mathematics, vol. 118,
  Springer-Verlag, New York, 1989.

\bibitem{PhWe:Calkin}
N.C. Phillips and N.~Weaver, \emph{The {C}alkin algebra has outer
  automorphisms}, Duke Math. Journal \textbf{139} (2007), 185--202.

\bibitem{Sak:On}
S.~Sakai, \emph{On the group isomorphism of unitary groups in {$AW$}-algebras},
  T\^ohoku Math. J. (2) \textbf{7} (1955), 87--95.

\bibitem{Sak:Pure}
\bysame, \emph{Pure states on {$C\sp *$}-algebras}, Advances in quantum
  dynamics, Contemp. Math., vol. 335, Amer. Math. Soc., Providence, RI, 2003,
  pp.~247--251.

\bibitem{Sh:Proper}
S.~Shelah, \emph{Proper forcing}, Lecture Notes in Mathematics 940, Springer,
  1982.

\bibitem{ShSte:PFA}
S.~Shelah and J.~Stepr{\=a}ns, \emph{{PFA} implies all automorphisms are
  trivial}, Proceedings of the American Mathematical Society \textbf{104}
  (1988), 1220--1225.

\bibitem{ShSte:Nontrivial}
\bysame, \emph{Non-trivial homeomorphisms of {$\beta\bbN\setminus \bbN$}
  without the continuum hypothesis}, Fundamenta Mathematicae \textbf{132}
  (1989), 135--141.

\bibitem{Ste:Size}
J.~Stepr{\=a}ns, \emph{The autohomeomorphism group of the \v {C}ech-{S}tone
  compactification of the integers}, Trans. Amer. Math. Soc. \textbf{355}
  (2003), no.~10, 4223--4240.

\bibitem{To:Partition}
S.~Todorcevic, \emph{Partition problems in topology}, Contemporary mathematics,
  vol.~84, American Mathematical Society, Providence, Rhode Island, 1989.

\bibitem{Ve:Applications}
B.~Veli{\v{c}}kovi{\'c}, \emph{Applications of the open coloring axiom}, Set
  theory of the continuum, Math. Sci. Res. Inst. Publ., vol.~26, Springer, New
  York, 1992, pp.~137--154.

\bibitem{Ve:OCA}
\bysame, \emph{{OCA} and automorphisms of {${\mathcal P}(\omega) /\Fin$}}, Top.
  Appl. \textbf{49} (1992), 1--12.

\bibitem{We:Set}
N.~Weaver, \emph{Set theory and {$C^*$}-algebras}, Bull. Symb. Logic
  \textbf{13} (2007), 1--20.

\bibitem{Wo:CH2}
W.H. Woodin, \emph{The continuum hypothesis, {P}art {II}}, Notices of the Amer.
  Math. Soc. \textbf{48} (August 2001), no.~7, 681--690.

\end{thebibliography}
\end{document}